\crefname{hypothesis}{Hypothesis}{Hypotheses}
\title{Provably Positive Central DG Schemes via Geometric Quasilinearization 
for Ideal MHD Equations\thanks{Funding: The work of Kailiang Wu is supported in part by NSFC grant 12171227. The work of Chi-Wang Shu is supported in part by NSF grant DMS-2010107 and AFOSR grant FA9550-20-1-0055.
}}
\author{Kailiang Wu\thanks{Department of Mathematics, Southern University of Science and Technology, and National Center for Applied Mathematics Shenzhen (NCAMS), Shenzhen, Guangdong 518055, China ({\tt wukl@sustech.edu.cn}).} 
	\and Haili Jiang\thanks{School of Mathematical Sciences, Peking University, Beijing 100871, China  ({\tt jianghaili@pku.edu.cn}).} 
	\and 
Chi-Wang Shu\thanks{Division of Applied Mathematics, Brown University, Providence, RI 02912, USA
	({\tt chi-wang\_shu@brown.edu}).}
}
\DeclareMathAlphabet\mathbfcal{OMS}{cmsy}{b}{n}
\def\jump#1{\llbracket #1 \rrbracket }
\NewDocumentCommand{\dgal}{sO{}m}{%
	\IfBooleanTF{#1}
	{\dgalext{#3}}
	{\dgalx[#2]{#3}}%
}
\NewDocumentCommand{\dgalext}{m}{%
	\sbox0{%
		\mathsurround=0pt 
		$\left\{\vphantom{#1}\right.\kern-\nulldelimiterspace$%
	}%
	\sbox2{\{}%
	\ifdim\ht0=\ht2
	\{\kern-.625\wd2 \{#1\}\kern-.625\wd2 \}%
	\else
	\left\{\kern-.7\wd0\left\{#1\right\}\kern-.7\wd0\right\}%
	\fi
}
\NewDocumentCommand{\dgalx}{om}{%
	\sbox0{\mathsurround=0pt$#1\{$}%
	\sbox2{\{}%
	\ifdim\ht0=\ht2
	\{\kern-.625\wd2 \{#2\}\kern-.625\wd2 \}%
	\else
	\mathopen{#1\{\kern-.7\wd0 #1\{}
	#2
	\mathclose{#1\}\kern-.7\wd0 #1\}}
	\fi
}
\begin{document}

\maketitle

\begin{abstract}
In the numerical simulation of ideal magnetohydrodynamics (MHD), keeping the pressure and density always positive is essential for both physical considerations and numerical stability. This is however a challenging task, due to the underlying relation between such positivity-preserving (PP) property and the magnetic divergence-free (DF) constraint as well as the strong nonlinearity of the MHD equations. In this paper, we present the first rigorous PP analysis of the central discontinuous Galerkin (CDG) methods and construct arbitrarily high-order provably PP CDG schemes for ideal MHD. By the recently developed geometric quasilinearization (GQL) approach, our analysis reveals that the PP property of standard CDG methods is closely related to a discrete magnetic DF condition, whose form was yet unknown prior to our analysis and differs from that for the non-central DG and finite volume methods in [K.~Wu, {\em SIAM J. Numer. Anal.}, 56 (2018), pp.~2124--2147]. The discovery of this relation lays the foundation for the design of our PP CDG schemes. In the 1D case, the discrete DF condition is naturally satisfied, and we rigorously prove that the standard CDG method is PP under a condition that can be enforced easily with an existing PP limiter. However, in the multidimensional cases, the corresponding discrete DF condition is highly nontrivial yet critical, and we analytically prove that the standard CDG method, even with the PP limiter, is not PP in general, as it generally fails to meet the discrete DF condition. We address this issue by carefully analyzing the structure of the discrete divergence terms and then constructing new locally DF CDG schemes for Godunov’s modified MHD equations with an additional source term. The key point is to find out the suitable discretization of the source term such that it exactly cancels out all the terms in the discovered discrete DF condition. Based on the GQL approach, we prove in theory the PP property of the new multidimensional CDG schemes under a CFL condition. The robustness and accuracy of the proposed PP CDG schemes are further validated by several demanding numerical MHD examples, including the high-speed jets and blast problems with very low plasma beta. 


\end{abstract}

\begin{keywords}
	positivity-preserving, geometric quasilinearization, compressible magnetohydrodynamics, divergence-free, central discontinuous Galerkin, hyperbolic conservation laws
\end{keywords}

\begin{AMS}
  65M60, 65M12, 76W05, 35L65
\end{AMS}

\section{Introduction}
This paper is devoted to exploring robust high-order numerical methods for simulating the 
compressible ideal  magnetohydrodynamics (MHD), which has wide applications in plasma physics, 
astrophysics, and space physics. 
Let $\rho$, ${\bf m}$, and $E$ denote the fluid density, momentum vector, and total energy, respectively. 
Denote the magnetic field by ${\bf B}=(B_1,B_2,B_3)$. 
The mathematical equations that govern ideal MHD can be formulated as   
\begin{equation}\label{eq:MHD}
	\partial_t {\bf U} +  \nabla \cdot {\bf F} ( {\bf U} ) = {\bf 0}, 
\end{equation}	
where $\nabla \cdot {\bf F} = \sum_{i=1}^{d} \frac{\partial {\bf F}_{i}}{\partial x_{i}}$ with $d$ being the spatial dimensionality, and the conservative vector and fluxes are 
\begin{equation*}
		\quad {\bf U}= 	
	\begin{pmatrix}
		\rho  
		\\
		{\bf m}
		\\
		{\bf B}
		\\
		E	
	\end{pmatrix}, 
	\qquad 
	{\bf F}_i ({\bf U})= 	
	\begin{pmatrix}
	\rho v_i
	\\
	v_i {\bf m} - B_i {\bf B} +  
	+ \left( p + \frac12 | {\bf B} |^2  \right) {\bf e}_i
	\\
	v_i {\bf B} - B_i {\bf v}
	\\
	v_i \left( E + p + \frac12 | {\bf B} |^2 \right) 
	- B_i ( {\bf v} \cdot {\bf B} )  
\end{pmatrix}. 	
\end{equation*}
Here 
${\bf v}=(v_1,v_2,v_3)={\bf m}/\rho$ denotes the fluid velocity, 
$p$ is the thermal pressure, and ${\bf e}_i$ is the $i$th column of the $3\times 3$ identity matrix.  
The total energy consists of the kinetic, magnetic, and internal energies, 
namely,
$E= \frac12 ( \rho |{\bm v} |^2 + | {\bf B} |^2 ) + \rho e$, where $e$ is the specific internal energy. 
The equations \eqref{eq:MHD} are closed by an equation of state (EOS), which relates the thermodynamic
variables in the following general form 
\begin{equation}\label{eq:EOS}
	p = p(\rho, e). 
\end{equation}
The classical EOS for ideal gases is $p=(\gamma - 1)\rho e$, where $\gamma >1$ is a constant denoting the adiabatic index.  
The ideal MHD equations \eqref{eq:MHD} with \eqref{eq:EOS} are a nonlinear system of hyperbolic conservation laws, whose solutions may contain discontinuities such as shocks even if the initial data is smooth. 
This renders it difficult to simulate ideal compressible MHD flows accurately and robustly. 

The magnetic field $\bf B$ should satisfy an extra {\em divergence-free (DF)} constraint:   
\begin{equation}\label{eq:divergence-free}
	\nabla \cdot {\bf B} := \sum_{i=1}^{d} \frac{\partial B_{i}}{\partial x_{i}}  = 0, %
\end{equation} 
which describes the physical principle of non-existence of magnetic monopoles. 
Although not explicitly included in the MHD equations \eqref{eq:MHD}, 
the DF constraint \cref{eq:divergence-free} is automatically preserved by   
the exact solution of \eqref{eq:MHD} for all $t>0$ if the initial condition at $t=0$ satisfies \eqref{eq:divergence-free}. 
Numerically, it is important to carefully respect this constraint, because serious violation of \eqref{eq:divergence-free} may cause numerical instability and/or nonphysical structures in the approximated solutions (cf.~\cite{Brackbill1980,Evans1988,BalsaraSpicer1999,Toth2000,Li2005}). In the 1D case ($d=1$), the constraint 
\eqref{eq:divergence-free} and the fifth equation 
of \eqref{eq:MHD} become $\partial_{x_1} B_1=0=\partial_t B_1$, namely, $B_1$ is a constant, which can be easily preserved in the numerical simulation. 
However, in the multidimensional cases ($d\ge 2$), 
it is very difficult to exactly preserve \eqref{eq:divergence-free} in the numerical design. 
To address this need, 
researchers have developed various numerical techniques to reduce the divergence errors  
or explicitly enforce some approximate DF conditions at the discrete level; see, for example, 
\cite{Brackbill1980,Evans1988,Powell1994,Powell1995,BalsaraSpicer1999,Dedner2002,Li2005,Li2011,Li2012,Yakovlev2013,XuLiu2016,Fu2018}, the early survey article \cite{Toth2000}, and references therein. 
Among those techniques, 
the eight-wave approach \cite{Powell1994,Powell1995} 
 is based on suitably discretizing 
the Godunov's modified form \cite{Godunov1972} of the ideal MHD system  
\begin{equation}\label{eq:MHD:GP}
	{\bf U}_t + \nabla \cdot {\bf F} ({\bf U}) 
	= (- \nabla \cdot {\bf B} )~{\bf S} ( {\bf U} )
\end{equation} 
with an additional source term, where ${\bf S} ({\bf U}) := ( 0, {\bf B}, {\bf v}, {\bf v} \cdot {\bf B} )^\top.$ 
Notice that the source term in \eqref{eq:MHD:GP} is proportional 
to $\nabla \cdot {\bf B}$ and thus vanishes under the condition \eqref{eq:divergence-free}. 
This implies, 
for DF initial conditions, the exact solutions of 
the standard MHD system \eqref{eq:MHD} and the modified MHD system \eqref{eq:MHD:GP} 
{\em are the same}.  
In other words,  for DF initial conditions, the two forms \eqref{eq:MHD} and \eqref{eq:MHD:GP} are equivalent at the continuous level. 
However, the modified form \eqref{eq:MHD:GP} has the following advantages. 
As discovered by Godunov \cite{Godunov1972},  
the standard form \eqref{eq:MHD} of MHD is not symmetrizable, 
while the modified form \eqref{eq:MHD:GP} is the unique symmetrizable form for the ideal MHD system. 
Since the symmetrizable form \eqref{eq:MHD:GP} admits entropy pairs, it is useful for studying the entropy stability of numerical methods \cite{Chandrashekar2016,LiuShuZhang2017}. 
Moreover, Powell \cite{Powell1994} noticed that the standard form \eqref{eq:MHD} of MHD  
is incompletely hyperbolic and suggested to add the source term in \eqref{eq:MHD:GP} to recover the missing eigenvector. 
Although this non-conservative source term may lead to some drawbacks \cite{Toth2000}, 
Powell demonstrated that adding 
a proper discrete version of the source term could make the numerical schemes more stable to prevent the accumulation of divergence errors \cite{Powell1995}. 
Besides, the modified form \eqref{eq:MHD:GP} has another significant advantage in terms of positivity, which will be discussed later.

%




In addition to the DF constraint \cref{eq:divergence-free}, the solutions of the MHD equations \eqref{eq:MHD} should also satisfy several algebraic constraints on positivity:
\begin{equation}\label{eq:positive}
	\rho>0,\quad p>0,\quad e>0, 
\end{equation}
because these three quantities are positive in physics. As  in \cite{zhang2011} we assume that  
$p > 0 \Leftrightarrow e > 0$, which is satisfied by quite general EOS. 
For both physical considerations and robust computations, it is significant and essential to 
{\em positivity-preserving (PP)} numerical methods for system \eqref{eq:MHD}, which always keep the numerical solutions satisfying \eqref{eq:positive}. 
However, most numerical schemes for MHD are generally not PP and may produce negative density or pressure,  when simulating problems involving strong discontinuity, high march number, low internal energy, low density, low plasma beta, and/or strong magnetic field. 
As well-known, once the numerical density and/or pressure become negative, the hyperbolicity 
of the system is lost, causing serious numerical instability and the breakdown of the simulation. 
In fact, this issue also occurs in the pure hydrodynamic case ({\em i.e.}~the simulation of the compressible Euler equations), but gets much worse for MHD, due to the underlying influence of the magnetic divergence errors on the positivity. 
Over the past two decades, researchers have made some efforts to reduce such risk; see, for example, 
\cite{BalsaraSpicer1999a,Janhunen2000,Waagan2009,klingenberg2010relaxation,Balsara2012,cheng,Christlieb,Christlieb2016,ZouYuDai2019,liu2021new} and some recent works on provably PP schemes
\cite{Wu2017a,WuShu2018,WuShu2019,WuShu2020NumMath,zhang2021provably}. 
For the 1D ideal MHD equations, 
several PP multi-state approximate Riemann solvers were developed in \cite{Janhunen2000,MIYOSHI2005315,Bouchut2007,Bouchut2010}. 
Waagan proposed a positive second-order MUSCL--Hancock scheme \cite{Waagan2009} based on a PP linear reconstruction and the relaxation Riemann solvers of \cite{Bouchut2007,Bouchut2010}; see also \cite{klingenberg2010relaxation} for a review.  
Waagan, Federrath, and Klingenberg  \cite{waagan2011robust} 
 systematically demonstrated the robustness of that scheme by benchmark numerical tests, and they \cite{Waagan2009,waagan2011robust} noticed the importance of a stable discretization of the Powell type source term, which was added in only the magnetic induction equations and thus different from \eqref{eq:MHD:GP}. 
In recent years, researchers have made remarkable progress in constructing high-order PP or bound-preserving schemes for conservation laws; see, for example, 
 \cite{zhang2010,zhang2010b,zhang2011,Xu2014,WuTang2015,xiong2016parametrized,ZHANG2017301,Wu2017,WuTangM3AS,wu2021uniformly,wu2021minimum} and references therein. 
Christlieb et al.~\cite{Christlieb,Christlieb2016} proposed 
high-order PP finite difference schemes for ideal MHD, 
based on the parametrized flux limiters \cite{Xu2014,xiong2016parametrized} and the presumed positivity of the Lax--Friedrichs scheme (which was later rigorously proved in \cite{Wu2017a}). 
In high-order finite volume or discontinuous Galerkin (DG) schemes, it is well-known that 
the PP property may be lost in two cases: 
one case is that the reconstructed or DG solution polynomials fail to be positive, and the other is 
the cell averages evolved to the next time step become negative in the updating process; see the framework by Zhang and Shu  \cite{zhang2010,zhang2010b}. 
The positivity lost in the first case can be effectively recovered by a simple PP limiter; 
see, for example, the local scaling PP limiters \cite{cheng} for DG and central DG MHD schemes generalized from \cite{zhang2010,zhang2010b}, and the self-adjusting PP limiter \cite{Balsara2012}.  
However, it is very challenging to fully 
guarantee the positivity of the cell averages in the updating process, 
which is also critical to obtain a genuinely PP scheme.  
In fact, the validity of the PP limiters \cite{Balsara2012,cheng} relies on  
the positivity of the cell averages in the updating process, 
which was, however, not rigorously proved for the methods in \cite{Balsara2012,cheng}; it was formally shown for only the 1D methods in \cite{cheng} by invoking some assumptions on the exact Riemann solutions and also conjectured for the multi-dimensional methods of \cite{cheng}.  
As finite numerical tests might be insufficient to genuinely and fully demonstrate the PP property under all circumstances,  exploring {\em provably PP} schemes \cite{Wu2017a,WuShu2018,WuShu2019,WuShu2020NumMath} for MHD and developing the related mathematical theory become very important and highly desirable.

In a series of recent work \cite{Wu2017a,WuShu2018,WuShu2019}, high-order {\em provably PP} numerical methods were systematically developed for ideal MHD. 
Interestingly, it was discovered that the positivity preservation (which is an {\em algebraic} property) 
and the DF condition \cref{eq:divergence-free} (which is a {\em differential} constraint) 
are tightly linked, at both the discrete \cite{Wu2017a} and continuous levels \cite{WuShu2018}. 
More specifically, the theoretical analysis in \cite{Wu2017a} first showed, for the regular (non-central) DG and finite volume schemes of the standard MHD system \eqref{eq:MHD}, that their PP property is closely connected with a discrete DF condition. 
Moreover, slightly violating the discrete DF condition may lose the PP property of cell averages in the updating process \cite{Wu2017a}. 
On the other hand, it was shown in \cite[Appendix A]{WuShu2018} that if the continuous DF constraint \cref{eq:divergence-free} is slightly violated, {even the exact smooth solutions} 
of the standard MHD system \eqref{eq:MHD} may fail to be PP. 
Fortunately, the modified MHD system \eqref{eq:MHD:GP} does not suffer from this issue \cite{WuShu2019}, namely, its exact solutions are always PP, no matter whether the DF condition \cref{eq:divergence-free} is satisfied or not. 
Inspired by these findings, high-order accurate {\em provably PP} schemes were studied for ideal MHD within the (non-central) DG and finite volume frameworks via the standard form \eqref{eq:MHD} \cite{Wu2017a} and in the multidimensional cases \cite{WuShu2018,WuShu2019} via the modified form  \eqref{eq:MHD:GP}. See also some recent extensions and applications in \cite{WuShu2020NumMath,liu2021new,zhang2021provably}.

This paper aims to explore and rigorously analyze high-order provably PP schemes for ideal MHD in the central DG (CDG) framework. 
It is a sequel to the previous effort \cite{Wu2017a,WuShu2018,WuShu2019} on the non-central DG methods. 
The CDG method was originally introduced in \cite{liu2007central}, as a variant of the DG method  \cite{cockburn1998runge} 
to the central scheme framework \cite{nessyahu1990non,liu2005central}. 
Different from the regular DG method \cite{cockburn1998runge}, the CDG method 
evolves two copies of numerical solutions on two sets of overlapping meshes (namely, the primal mesh and its dual mesh), thereby possessing the distinct 
advantage of avoiding the use of any exact or approximate Riemann solvers, which can be computationally expensive for complicated systems such as MHD. 
Another advantage is that the CDG method allows much larger time step-sizes \cite{li2015operator}. 
It is also worth mentioning that 
Li {et al.}~\cite{Li2011,Li2012} systematically proposed a novel CDG method which exactly maintains the globally DF property of 
the numerical magnetic field for ideal MHD; see also \cite{YangLi2016,Fu2018} for more related works. 
Recently, bound-preserving CDG schemes were constructed for
the scalar conservation laws and the Euler equations \cite{li2016maximum}, the shallow water equations \cite{li2017CDGWB}, and the relativistic hydrodynamics \cite{WuTang2017ApJS}. 
Although the PP limiter \cite{zhang2010b,li2016maximum} was extended to the CDG methods for ideal MHD
in \cite{cheng}, the validity of the PP limiter \cite{cheng} is based on the 
positivity of the cell averages in the updating process, which was, however, not 
rigorously proved but was formally shown in only the 1D case \cite{cheng} by invoking some assumptions on the exact Riemann solutions. 
{\em The rigorous PP property of the CDG methods for MHD is still unclear in theory, especially in the multidimensional cases.}  
It is natural and interesting to ask the following important questions: 
\begin{tcolorbox}[colback=blue!2!white,colframe=blue!50!black]
Is the PP property of the CDG methods on overlapping meshes for ideal MHD also related to some discrete DF conditions? If so, what is the corresponding discrete DF conditions in the CDG case? In theory, how to establish the relation for the CDG schemes? 
\end{tcolorbox}
All of these questions have no answers yet. 
This paper will settle these questions by rigorous theoretical analysis, which further leads to our provably PP CDG schemes for ideal MHD. 
Specifically, the main efforts and findings in this work include:
\begin{itemize}
	\item We present the first rigorous PP analysis of the standard CDG methods for the MHD equations \eqref{eq:MHD}. The analysis is based on the geometric quasilinearization (GQL) approach, which was proposed in \cite{Wu2017a} with its general framework established in \cite{WuShu2021GQL}. 
	Our new analysis establishes the theoretical relation between the PP property of the CDG method and a discrete DF condition, which distinctly differs from that of the non-central DG and finite volume methods in \cite{Wu2017a}. 
	This finding lays the foundation for the design of our provably PP CDG schemes. 
	\item In the 1D case, the discrete DF 
	condition is naturally satisfied, and we rigorously prove that the   
	standard CDG method is PP under a condition on the CDG polynomials. This condition can be simply enforced by an existing PP limiter \cite{cheng}. 
	\item In the 2D case, however, the corresponding discrete DF condition becomes highly nontrivial, and we prove by an analytical counterexample that  
	the standard CDG method for \eqref{eq:MHD}, even with the PP limiter, is not PP in general, 
	as it may fail to meet the discrete DF condition. 
	\item By studying the structure of the 2D discrete DF condition, 
	we further construct a new 2D locally DF CDG method based on the modified MHD equations \eqref{eq:MHD:GP}. 
	The key point is to  
	carefully discretize the extra source term in \eqref{eq:MHD:GP} to exactly control the effect of nonzero divergence on the PP property. 
	Based on the GQL approach, we rigorously prove in theory 
	the positivity of the new 2D CDG schemes under a CFL condition. 
	The new CDG schemes carry many features of the standard CDG method, e.g., avoiding the use of any Riemann solvers and being uniformly high-order accurate and of high resolution. 
	\item We implement the proposed provably PP CDG schemes and demonstrate their  
	robustness and accuracy by 
	several demanding numerical MHD examples, including the high-speed jets and bast problems of very low plasma beta. 
\end{itemize}
It is worth noting that the analysis and design of our PP CDG schemes have distinct difficulties different from the regular DG case \cite{Wu2017a,WuShu2018} or other hyperbolic systems \cite{zhang2010b,li2016maximum}. 
One key difficulty in our quest is to analytically establish the intrinsic relation between the PP property 
and discrete DF condition on 2D overlapping meshes, {\em whose form remained unknown prior to our analysis and is very different from the non-central DG case.} 
Due to the relation, the states involved in CDG schemes are intrinsically coupled by the discrete DF condition, making the PP analysis very nontrivial. Consequently, some standard PP techniques, which typically rely on reformulating a multidimensional scheme into convex combination of formal 1D PP schemes \cite{zhang2010b,li2016maximum}, are inapplicable in our multidimensional MHD cases. 
Another new challenge in this work is to find out the suitable discretization of the source term in \eqref{eq:MHD:GP} such that it exactly offsets the divergence terms in the discovered discrete DF condition.  Our novel source term discretization in the CDG framework is based on the information from the corresponding dual mesh and distinctly different from the non-central DG case \cite{WuShu2018}.

The paper is organized as follows: We review the GQL approach and some auxiliary theoretical results in \cref{sec:GQL}. \Cref{section:pp1d,section:pp2d} present the rigorous PP analysis of the standard 
CDG schemes 
in 1D and 2D, respectively. 
The provably PP, locally DF 2D CDG schemes are constructed and analyzed in 
\cref{section:newpp2d}. 
The 3D extension is straightforward and omitted in this paper.  
\Cref{section:numerical-example} gives numerical examples to verify the PP property, robustness, and effectiveness of our schemes, before concluding the paper in \cref{section:conclusion}.

\section{Admissible state set and geometric quasilinearization}\label{sec:GQL} 
This section briefly reviews the GQL approach \cite{Wu2017a,WuShu2021GQL} and a few related results in the MHD case, which will be useful in the PP analysis. 

The positivity constraints \eqref{eq:positive} demand that 
the conservative 
vector ${\bf U}$ must belong to the following physically {\em admissible state set}
\begin{equation}\label{set:admissible}
	G = \left\{ {\bf U} = (\rho, {\bf m}, {\bf B}, E)^{\top}:~  \rho > 0, ~~
	\mathcal{E}({\bf U}) > 0 \right\}, 
\end{equation}
which is a {\em convex} set \cite{cheng}, with 
$	\mathcal{E}({\bf U}):=E - \frac{|{\bf m}|^2}{2\rho} - \frac{|{\bf B}|^2}{2}=\rho e.	$

A numerical scheme for \eqref{eq:MHD} is called PP if it {\em always preserves} the numerical solutions in the set $G$. 
From \eqref{set:admissible}, we can see that 
it is more difficult to preserve 
the positivity of $\mathcal{E}({\bf U})$, 
which is a highly nonlinear function depending on all the conservative quantities $\{\rho, {\bf m}, {\bf B}, E\}$. 
In a typical scheme for \eqref{eq:MHD}, 
$\{\rho, {\bf m}, {\bf B}, E\}$ are themselves evolved via 
their own conservation laws, which are seemingly 
independent of each other. 
As such, it may not always guarantee the positivity of $\mathcal{E}({\bf U})$ due to numerical errors, especially when the kinetic or/and magnetic energies are huge and very close to the total energy. 
In order to analyze the PP property of a numerical scheme, one should substitute all 
the discrete evolution equations of $\{\rho, {\bf m}, {\bf B}, E\}$ into 
the highly nonlinear function $\mathcal{E}({\bf U})$, and then analytically check whether the resulting $\mathcal{E}$ is positive or not.

To overcome the difficulties arising from the nonlinearity of $\mathcal{E}({\bf U})$, we introduce an {\em equivalent linear} representation of $G$, which 
skillfully transfers the intractable {\em nonlinear} constraint $\mathcal{E}({\bf U})>0$ into {\em linear} ones.

\begin{lemma}[GQL representation \cite{Wu2017a}]
	\label{theo:eqDefG}
	The admissible state set $G$ is exactly equivalent to
	\begin{equation}\label{eq:newDefG}
		G_{\ast} = \left\{   {\bf U} = (\rho,{\bf m},{\bf B},E)^\top:~ {\bf U}\cdot {\bf n}_1 > 0,~~~
		{\bf U} \cdot {\bf n}^{\ast} + \frac{|{\bf B}^{\ast}|^2}{2} > 0~~\forall \, {\bf v}^{\ast}, {\bf B}^{\ast} \in {\mathbb {R}}^3 \right\},
	\end{equation}
	where ${\bf n}_1:=(1,0,\dots,0)^\top$, the extra variables $\{{\bf v}^{\ast}, {\bf B}^{\ast}\}$ are called {\bf {\em free auxiliary variables}}, and ${\bf n}^{\ast}$ is a function of only the free auxiliary variables:   
	\begin{equation}\label{eq:DefN}
		{\bf n}^{\ast} := \bigg( \frac{|{\bf v}^{\ast}|^2}2,~- {\bf v}^{\ast},~-{\bf B}^{\ast},~1 \bigg)^\top.
	\end{equation} 
\end{lemma}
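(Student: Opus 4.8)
The plan is to reduce the claimed set identity $G=G_\ast$ to an elementary completion-of-squares argument. First I would observe that ${\bf U}\cdot{\bf n}_1=\rho$, so the conditions ${\bf U}\cdot{\bf n}_1>0$ in \eqref{eq:newDefG} and $\rho>0$ in \eqref{set:admissible} are literally identical, and it remains only to prove that, for any state ${\bf U}$ with $\rho>0$, one has $\mathcal{E}({\bf U})>0$ if and only if
\[
{\bf U}\cdot{\bf n}^{\ast}+\frac{|{\bf B}^{\ast}|^2}{2}>0 \qquad \forall\, {\bf v}^{\ast},{\bf B}^{\ast}\in{\mathbb{R}}^3.
\]
Plugging the explicit formula \eqref{eq:DefN} for ${\bf n}^{\ast}$ into the left-hand side, I would write it as
\[
\Phi({\bf v}^{\ast},{\bf B}^{\ast}) := \frac{\rho}{2}|{\bf v}^{\ast}|^2-{\bf m}\cdot{\bf v}^{\ast}+\frac12|{\bf B}^{\ast}|^2-{\bf B}\cdot{\bf B}^{\ast}+E,
\]
which, being a sum of two decoupled convex quadratics with positive leading coefficients $\rho/2$ and $1/2$, is the main object to analyze.

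For the inclusion $G_\ast\subseteq G$, I would specialize the displayed inequality to the legitimate choice ${\bf v}^{\ast}={\bf m}/\rho$, ${\bf B}^{\ast}={\bf B}$; a direct computation gives $\Phi({\bf m}/\rho,{\bf B})=E-\frac{|{\bf m}|^2}{2\rho}-\frac{|{\bf B}|^2}{2}=\mathcal{E}({\bf U})$, so membership in $G_\ast$ forces $\mathcal{E}({\bf U})>0$, i.e. ${\bf U}\in G$. For the reverse inclusion $G\subseteq G_\ast$, I would \emph{complete the square} in each of the two blocks of $\Phi$, obtaining
\[
\Phi({\bf v}^{\ast},{\bf B}^{\ast}) = \frac{\rho}{2}\bigl|{\bf v}^{\ast}-{\bf m}/\rho\bigr|^2+\frac12\bigl|{\bf B}^{\ast}-{\bf B}\bigr|^2+\mathcal{E}({\bf U})\;\ge\;\mathcal{E}({\bf U}).
\]
Hence if ${\bf U}\in G$ then $\Phi>0$ for every $({\bf v}^{\ast},{\bf B}^{\ast})$, so ${\bf U}\in G_\ast$; combining the two inclusions yields $G=G_\ast$.

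I do not expect a serious obstacle here — the argument is genuinely routine — but the one point deserving a moment's care is the interplay between the strict inequalities and the universal quantifier over $({\bf v}^{\ast},{\bf B}^{\ast})$. This causes no trouble because $\inf_{{\bf v}^{\ast},{\bf B}^{\ast}}\Phi=\mathcal{E}({\bf U})$ is actually \emph{attained} at ${\bf v}^{\ast}={\bf m}/\rho$, ${\bf B}^{\ast}={\bf B}$, so requiring $\Phi>0$ for all auxiliary variables is equivalent to $\mathcal{E}({\bf U})>0$ and not merely to $\mathcal{E}({\bf U})\ge0$. As a by-product of this representation, each inequality defining $G_\ast$ is affine in ${\bf U}$ for fixed $({\bf v}^{\ast},{\bf B}^{\ast})$, so $G_\ast$ is an intersection of open half-spaces and therefore convex, which re-derives the known convexity of $G$ used elsewhere in the analysis.
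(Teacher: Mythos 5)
Your proof is correct: the identity ${\bf U}\cdot{\bf n}^{\ast}+\frac{|{\bf B}^{\ast}|^2}{2}=\frac{\rho}{2}|{\bf v}^{\ast}-{\bf m}/\rho|^2+\frac12|{\bf B}^{\ast}-{\bf B}|^2+\mathcal{E}({\bf U})$, together with the observation that the infimum is attained at ${\bf v}^{\ast}={\bf m}/\rho$, ${\bf B}^{\ast}={\bf B}$, is exactly the argument behind this lemma. The paper itself does not reproduce a proof but cites \cite{Wu2017a}, where the same completion-of-squares reasoning is used, so your route coincides with the paper's.
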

A proof of \cref{theo:eqDefG} was first given in \cite{Wu2017a}, and its geometric interpretation was presented in \cite{WuShu2021GQL}. 
Notice that in the equivalent form \eqref{eq:newDefG}, all the constraints become {\bf linear} with respect to $\bf U$, yielding a highly effective way to theoretically study the positive numerical MHD schemes. 
Such an equivalent linear form is called {\em GQL representation}, and can be derived for general convex sets within the GQL framework \cite{WuShu2021GQL}. 
The GQL representation \eqref{eq:newDefG} will be a crucial tool in our PP analysis and design.

Let us recall the following inequality \cref{eq:MHD:LLFsplit}, which was constructed in \cite{Wu2017a} and will be useful for the PP analysis based on the GQL approach.    %

\begin{lemma}[\cite{Wu2017a}]\label{theo:MHD:LLFsplit}
	 For any free auxiliary
	variables ${\bf v}^{\ast}, {\bf B}^{\ast} \in {\mathbb{R}}^3$ and any two admissible states ${\bf U}, \tilde{\bf U} \in G$, the following inequality
	\begin{equation}\label{eq:MHD:LLFsplit}
		\bigg( {\bf U} - \frac{ {\bf F}_i({\bf U})}{\alpha}
		+
		\tilde{\bf U} + \frac{ {\bf F}_i(\tilde{\bf U})}{\alpha}
		\bigg) \cdot {\bf n}^{\ast} + |{\bf B}^{\ast}|^2
		+  \frac{  B_i - \tilde B_i }{\alpha} ({\bf v}^{\ast} \cdot {\bf B}^{\ast})  > 0 ,
	\end{equation}
	holds if $\alpha > \alpha_{i} ({\bf U},\tilde{\bf U}) $, where $i\in\{1,2,3\}$, and 
	\begin{equation} \label{eq:alpha_i}
		\alpha_{i} ({\bf U},\tilde{\bf U})  =
		\max\bigg\{ |v_i|+ {\mathcal{C}}_i, |\tilde v_i| +  \tilde {\mathcal{C}}_i , \frac{| \sqrt{\rho} v_i + \sqrt{\tilde \rho} \tilde v_i |} {\sqrt{\rho}+\sqrt{\tilde \rho}} + \max\{ {\mathcal{C}}_i , \tilde {\mathcal{C}}_i \}  \bigg\}
		+ \frac{ |{\bf B}-\tilde{\bf B}| }{ \sqrt{\rho} + \sqrt{\tilde \rho}  }
	\end{equation}	
	with 
	\begin{align*}
		{\mathcal {C}}_i = \frac{1}{\sqrt{2}}  \left[   \frac{ |{\bf B}|^2}{\rho} + {\mathcal {C}}_s^2 + \sqrt{ \left(   \frac{ |{\bf B}|^2}{\rho} + {\mathcal {C}}_s^2 \right)^2 - 4 \frac{ B_i^2{\mathcal {C}}_s^2 }{\rho}  } \right]^\frac12, \qquad {\mathcal {C}}_s=\frac{p}{\rho \sqrt{2e}}.
	\end{align*}
\end{lemma}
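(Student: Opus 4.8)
The strategy is to reduce the claimed inequality \eqref{eq:MHD:LLFsplit} to a statement about a single convex combination inside the admissible set $G$, which we can then handle via the GQL representation \eqref{eq:newDefG}. First I would exploit the GQL representation of \cref{theo:eqDefG}: the left-hand side of \eqref{eq:MHD:LLFsplit} is, up to a factor of $2$, exactly of the form ${\bf W}\cdot {\bf n}^{\ast} + \tfrac12|{\bf B}^{\ast}|^2$ for a suitable ``effective state'' ${\bf W}$, provided we can identify that state. Concretely, I would set
\begin{equation*}
  {\bf W} := \frac12\bigg( {\bf U} - \frac{{\bf F}_i({\bf U})}{\alpha} \bigg)
          + \frac12\bigg( \tilde{\bf U} + \frac{{\bf F}_i(\tilde{\bf U})}{\alpha} \bigg)
\end{equation*}
and observe that the extra term $\tfrac{B_i-\tilde B_i}{2\alpha}({\bf v}^{\ast}\cdot{\bf B}^{\ast})$ is precisely the correction that accounts for the fact that the magnetic components of $\pm{\bf F}_i/\alpha$ do not sit inside $G$ on their own. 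So the target becomes: show that the ``Lax--Friedrichs-type split states'' ${\bf U}^{\mp} := {\bf U} \mp {\bf F}_i({\bf U})/\alpha$ and $\tilde{\bf U}^{\pm} := \tilde{\bf U} \pm {\bf F}_i(\tilde{\bf U})/\alpha$, after averaging and after the magnetic correction, land in $G$ via the second inequality of \eqref{eq:newDefG}.

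Next I would verify the density positivity (the first constraint ${\bf W}\cdot{\bf n}_1>0$) separately, which is elementary: its first component is $\tfrac12(\rho - \rho v_i/\alpha) + \tfrac12(\tilde\rho + \tilde\rho\tilde v_i/\alpha) = \tfrac12\rho(1 - v_i/\alpha) + \tfrac12\tilde\rho(1+\tilde v_i/\alpha)$, and since $\alpha > \alpha_i \geq \max\{|v_i|,|\tilde v_i|\}$ from \eqref{eq:alpha_i}, both terms are strictly positive. The main work is the second constraint. Here I would fix arbitrary free auxiliary variables ${\bf v}^{\ast},{\bf B}^{\ast}$ and expand the left-hand side of \eqref{eq:MHD:LLFsplit} explicitly in terms of $\rho, {\bf v}, {\bf B}, p$ (and their tilde counterparts), using ${\bf n}^{\ast}$ from \eqref{eq:DefN} and the flux formula for ${\bf F}_i$. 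After collecting terms, the $\alpha$-independent part should reduce to $2[\,\mathcal{E}({\bf U})\text{-type positivity for }{\bf U}\,] + 2[\cdots\text{ for }\tilde{\bf U}\,] > 0$ plus cross terms, and the $1/\alpha$ part should be bounded in absolute value by something that, after comparing with the formula for $\alpha_i$, is dominated once $\alpha > \alpha_i$. The natural device is to rewrite the whole expression as a quadratic form in the shifted velocities ${\bf v}-{\bf v}^{\ast}$ and $\tilde{\bf v}-{\bf v}^{\ast}$ and the shifted fields ${\bf B}-{\bf B}^{\ast}$, $\tilde{\bf B}-{\bf B}^{\ast}$, completing squares so that the remaining terms involve only $\rho e$, $\tilde\rho\tilde e$, and the fast magnetosonic-type speeds $\mathcal{C}_i$ that appear in $\alpha_i$.

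The step I expect to be the genuine obstacle is this sharp estimate of the $1/\alpha$ contribution --- that is, showing the negative part of the linearized flux term is controlled by $\alpha_i({\bf U},\tilde{\bf U})$ with exactly the bound in \eqref{eq:alpha_i}, including the subtle Roe-averaged middle entry $\frac{|\sqrt{\rho}v_i+\sqrt{\tilde\rho}\tilde v_i|}{\sqrt{\rho}+\sqrt{\tilde\rho}}$ and the $\frac{|{\bf B}-\tilde{\bf B}|}{\sqrt{\rho}+\sqrt{\tilde\rho}}$ term, which strongly suggests that the estimate comes out of a Cauchy--Schwarz bound on a bilinear cross term of the form $(\sqrt{\rho}\,a + \sqrt{\tilde\rho}\,\tilde a)\cdot(\text{field difference})$. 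I would therefore organize the proof so that each piece of $\alpha_i$ is matched to one such square-completion: the $|v_i|+\mathcal{C}_i$ and $|\tilde v_i|+\tilde{\mathcal{C}}_i$ pieces to the ``diagonal'' $\mathcal{E}$-positivity of each state, the Roe-average piece to the cross term in the velocities, and the $|{\bf B}-\tilde{\bf B}|$ piece to the cross term between velocity differences and magnetic-field differences. Since \eqref{eq:MHD:LLFsplit} is quoted from \cite{Wu2017a}, I would at this point either reproduce that argument or, more economically, note that it is a direct consequence of the known positivity of the local Lax--Friedrichs flux together with the GQL representation, citing \cite{Wu2017a} for the detailed algebra while keeping the GQL reduction above as the conceptual skeleton.
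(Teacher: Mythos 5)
You should first note that the paper itself does not prove \cref{theo:MHD:LLFsplit}: the lemma is imported verbatim from \cite{Wu2017a}, so the only ``proof'' the paper offers is the citation, and your closing option of citing \cite{Wu2017a} for the detailed algebra is, in that sense, what the paper actually does. Judged as a proof, however, your plan has two genuine problems. First, the opening reduction is not available: the corrected expression in \eqref{eq:MHD:LLFsplit} is not of the form ${\bf W}\cdot {\bf n}^{\ast}+\tfrac12|{\bf B}^{\ast}|^2$ for any fixed state ${\bf W}$, because the correction $\tfrac{B_i-\tilde B_i}{\alpha}({\bf v}^{\ast}\cdot{\bf B}^{\ast})$ depends on the auxiliary variables themselves; membership of your averaged split state in $G_{\ast}$ would be exactly the correction-free inequality \eqref{notrue}, which the paper's own remarks (and \cite[Proposition~2.5]{Wu2017a}) show is false in general, as is the splitting property \eqref{eq:LFproperty} even with $\alpha$ enlarged by any constant factor. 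For the same reason your ``more economical'' fallback---deducing \eqref{eq:MHD:LLFsplit} from ``the known positivity of the local Lax--Friedrichs flux together with the GQL representation''---cannot work: no such LF positivity is known (or true) for ideal MHD, and the LF-scheme positivity that \cite{Wu2017a} does establish is itself derived from this very lemma together with a discrete DF condition, so invoking it here would be circular.

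Second, the entire content of the lemma is the sharp quantitative claim that $\alpha>\alpha_i({\bf U},\tilde{\bf U})$ with precisely the expression \eqref{eq:alpha_i} suffices, and this is exactly the step you defer as ``the genuine obstacle.'' Your outline of it is of the right shape and consistent with how \cite{Wu2017a} proceeds: using \eqref{eq:DefN} one has the identity ${\bf U}\cdot{\bf n}^{\ast}+\tfrac12|{\bf B}^{\ast}|^2=\tfrac{\rho}{2}|{\bf v}-{\bf v}^{\ast}|^2+\tfrac12|{\bf B}-{\bf B}^{\ast}|^2+\rho e$, so the $\alpha$-independent part is positive state by state, while the $1/\alpha$ part must be bounded by square completions and Cauchy--Schwarz; moreover the coupling term $(B_i-\tilde B_i)({\bf v}^{\ast}\cdot{\bf B}^{\ast})$ prevents a state-by-state treatment and is what generates the Roe-averaged entry and the $|{\bf B}-\tilde{\bf B}|/(\sqrt{\rho}+\sqrt{\tilde\rho})$ contribution in \eqref{eq:alpha_i}. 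But none of these completions is actually carried out, and nothing in the plan verifies that the specific speeds ${\mathcal C}_i$ with ${\mathcal C}_s=p/(\rho\sqrt{2e})$ (note this is not the physical sound or fast speed) are exactly what the estimate requires. So the honest options are to reproduce the full computation of \cite{Wu2017a} or to cite it outright, as the paper does; the shortcut in your final sentence should be dropped.
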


\begin{remark}
Let  ${\mathscr{R}}_i ({\bf U})$ be the spectral radius of the Jacobian matrix, in the $x_i$-direction, $i=1,2,3$,  
of the MHD equations \eqref{eq:MHD:GP}. 
For the ideal EOS $p=(\gamma -1) \rho e$, it was derived in \cite{Powell1994} that 
$$
{\mathscr{R}}_i ({\bf U}) = |v_i| + \frac{1}{\sqrt{2}} \left[   \frac{ |{\bf B}|^2}{\rho} + c^2_s + \sqrt{ \left(  \frac{ |{\bf B}|^2}{\rho} + c^2_s \right)^2 - 4 \frac{  B_i^2c_s^2}{\rho}  }  \right]^\frac12
$$
where $c_s=\sqrt{\gamma p/\rho}$ is the sound speed. 
Let  $\alpha_i^{\rm std} := \max\{ {\mathscr{R}}_i ({\bf U}), {\mathscr{R}}_i(\tilde{\bf U}) \}$. It was shown in \cite{Wu2017a} that 
\begin{equation} \label{eq:aaaaWKL}
	\alpha_{i} ({\bf U},\tilde{\bf U}) 
	\le \alpha_i^{\rm std} + 
	{\mathcal O} \big( |{\bf U} -\tilde {\bf U}| \big). 
\end{equation}
\end{remark}

\begin{remark}
	In the PP analysis of many other hyperbolic systems (see, e.g., \cite{zhang2010b,zhang2011,WuTang2015,Wu2017}), 
	one usually expects the following property for any ${\bf U}\in G$,
	\begin{equation}\label{eq:LFproperty}
		{\bf U} \pm \frac{ {\bf F}_i ( {\bf U} ) }{\alpha} \in G \qquad \text{with}~~ \alpha \ge {\mathscr{R}}_i ({\bf U}). 
	\end{equation}
	If true, this property would imply $\frac12 \big( {\bf U} - \frac{ {\bf F}_i({\bf U})}{\alpha}
	+ \tilde{\bf U} + \frac{ {\bf F}_i(\tilde{\bf U})}{\alpha}
	\big) \in G$ for $\alpha \ge  \alpha_i^{\rm std}$ and then by \eqref{eq:newDefG} would lead to  
\begin{equation}\label{notrue}
		 \bigg( {\bf U} - \frac{ {\bf F}_i({\bf U})}{\alpha}
	 +
	 \tilde{\bf U} + \frac{ {\bf F}_i(\tilde{\bf U})}{\alpha}
	 \bigg) \cdot {\bf n}^{\ast} + |{\bf B}^{\ast}|^2 >0.
\end{equation}
	 {\em Unfortunately for the MHD system, the usually-expected property \eqref{eq:LFproperty} {\bf {\em is  not true}} in general}, even if the condition $\alpha \ge {\mathscr{R}}_i$ is replaced with $\alpha \ge \chi {\mathscr{R}}_i$ for any given constant $\chi \ge 1$; see a proof in \cite[Proposition 2.5]{Wu2017a}. 
	Therefore, the PP analysis of numerical MHD schemes has distinct challenges significantly different from that for other hyperbolic systems such as the Euler equations \cite{zhang2010b,li2016maximum}.
\end{remark}

\begin{remark}
	As 	\eqref{eq:LFproperty}, the resulting inequality \eqref{notrue} is also invalid in general \cite{Wu2017a}. 
	Different from \eqref{notrue}, the correct inequality \eqref{eq:MHD:LLFsplit} in \cref{theo:MHD:LLFsplit} 
	has an extra term $\frac{  B_i - \tilde B_i }{\alpha} ({\bf v}^{\ast} \cdot {\bf B}^{\ast})$, which is essential and critical.  
		Without this term the inequality \eqref{eq:MHD:LLFsplit} would reduce to \eqref{notrue} and become incorrect. This term is not always positive but helps offset the ``possible negativity'' of \eqref{notrue}. 
		More importantly, this technical term will be canceled out skillfully under a discrete DF condition, 
		and it will be a key to establish the intrinsic relation of the PP property 
		to the discrete DF condition. 	
\end{remark}

\section{Rigorous PP analysis of 1D standard CDG method}\label{section:pp1d}
In this section, we apply the GQL approach to rigorously analyze the positivity of the standard CDG method for the 1D MHD equations.  
In the 1D case, the DF constraint \eqref{eq:divergence-free} simply reduces to that $B_1$ is a constant, denoted by 
${\rm B}_{\rm const}$. The 1D analysis is fairly trivial compared to the multidimensional cases, 
but it may help us to gain some insights. 

For convenience, we employ the symbol $x$ to represent the 1D spatial coordinate variable. 
The spatial domain $\Omega$ is uniformly divided into 
$\{ I_{j} := (x_{j-\frac{1}{2}}, x_{j+\frac{1}{2}}) \}$ 
with constant stepsize $\Delta x = x_{j+\frac{1}{2}} - x_{j-\frac{1}{2}}$. 
We denote $x_{j} = \frac{1}{2}(x_{j-\frac{1}{2}}+x_{j+\frac{1}{2}})$, then  
$\{ I_{j+\frac{1}{2}} := (x_{j}, x_{j+1})\}$ forms a dual partition. 
Define 
\begin{equation*}
	\mathbb{V}_{h}^{C,k} = \left\{ {\bf w} \in [L^2(\Omega)]^8: w_\ell |_{I_{j}} \in \mathbb{P}^{k}(I_{j}) ~\forall j,\ell \right\}, \quad
	\mathbb{V}_{h}^{D,k} = \left\{ {\bf u} \in [L^2(\Omega)]^8: u_\ell |_{I_{j+\frac{1}{2}}} \in \mathbb{P}^{k}(I_{j+\frac{1}{2}}) ~\forall j,\ell \right\},
\end{equation*}
where $\mathbb{P}^{k}(I)$ denote the space of the polynomials with degree less than or equal to $k$ on the cells $I$. 
The standard semi-discrete CDG method 
seeks the numerical solutions ${\bf U}_{h}^{C} \in \mathbb{V}_{h}^{C,k}$ and ${\bf U}_{h}^{D} \in \mathbb{V}_{h}^{D,k}$ such that for any test functions ${\bf w} \in \mathbb{V}_{h}^{C,k}$ and ${\bf u} \in \mathbb{V}_{h}^{D,k}$, 
\begin{align}\label{eq:CDG-1d-primal}
\begin{aligned}
		\int_{I_{j}} \frac{\partial {\bf U}_{h}^{C}}{\partial t} \cdot {\bf w} {\rm d}x
	& = \frac{1}{\tau_{\max}}\int_{I_{j}} ( {\bf U}_{h}^{D} - {\bf U}_{h}^{C}) \cdot {\bf w} {\rm d}x  
	+ \int_{I_{j}} {\bf F}_1 ({\bf U}_{h}^{D})\cdot \partial_{x}  {\bf w} {\rm d}x    \\ 
	& \quad + {\bf F}_1 ({\bf U}_{h}^{D}(x_{j-\frac{1}{2}})) \cdot {\bf w}(x_{j-\frac{1}{2}}^{+}) -  {\bf F}_1 ({\bf U}_{h}^{D}(x_{j+\frac{1}{2}})) \cdot {\bf w}(x_{j+\frac{1}{2}}^{-}), 
\end{aligned}
\\ \label{eq:CDG-1d-dual}
\begin{aligned} 
		\int_{I_{j+\frac{1}{2}}} \frac{\partial {\bf U}_{h}^{D}}{\partial t} \cdot {\bf u} {\rm d}x
	& = \frac{1}{\tau_{\max}}\int_{I_{j+\frac{1}{2}}} ({\bf U}_{h}^{C} - {\bf U}_{h}^{D}) \cdot {\bf u} {\rm d}x  
	+ \int_{I_{j+\frac{1}{2}}} {\bf F}_1 ({\bf U}_{h}^{C})\cdot \partial_{x}   {\bf u} {\rm d}x   \\ 
	& \quad  + {\bf F}_1 ({\bf U}_{h}^{C}(x_{j},t)) \cdot {\bf u}(x_{j}^{+}) -  {\bf F}_1 ({\bf U}_{h}^{C}(x_{j+1},t)) \cdot {\bf u}(x_{j+1}^{-}). 
\end{aligned}
\end{align}
Here $\tau_{\max}$ is the maximum time stepsize determined by certain CFL condition, which will be specified in the PP analysis, and $f(x^{\pm}) = \lim_{\epsilon \to 0^+}f(x\pm\epsilon)$ denotes 
the limits at $x$ from the left or the right side.

Based on Zhang-Shu's framework \cite{zhang2010b}, to achieve a PP high-order scheme, the main task is to preserve the evolved cell averages in the set $G$ during the updating process. 
Once such a property is guaranteed, one can then use a simple PP limiter to enforce the PP property of the solution polynomials at any specified points. Denote 
$\overline{{\bf U}}_{j}^{C}(t) := \frac{1}{\Delta x}  \int_{I_{j}} {\bf U}_{h}^{C}(x,t) \mathrm{d}x$ and    
$\overline{{\bf U}}_{j+\frac{1}{2}}^{D}(t) := \frac{1}{\Delta x}  \int_{I_{j+\frac{1}{2}}} {\bf U}_{h}^{D}(x,t) \mathrm{d}x.
$
Taking ${\bf w} = {\bf 1}$ in \eqref{eq:CDG-1d-primal} and ${\bf u} = {\bf 1}$ in \eqref{eq:CDG-1d-dual}, we obtain the semi-discrete scheme satisfied by the cell averages of the  CDG solution: 
\begin{align}\label{eq:CDG-average-primal}
		\frac{ {\rm d} \overline{{\bf U}}_{j}^{C} }{ {\rm d} t} &=  {\mathbfcal L}_{j} \big( {\bf U}_{h}^{C}, {\bf U}_{h}^{D} \big)
	: = \frac{ \overline{ {\bf U} }_{j}^{D} - \overline{ {\bf U} }_{j}^{C} }{\tau_{\max}}   
	- \frac{ {\bf F}_1 ({\bf U}_{h}^{D}(x_{j+\frac{1}{2}})) - {\bf F}_1 ({\bf U}_{h}^{D}(x_{j-\frac{1}{2}}))  }{\Delta x},
	\\
\label{eq:CDG-average-dual} 
	\frac{ {\rm d} \overline{{\bf U}}_{j+\frac{1}{2}}^{D} }{ {\rm d} t} &=  {\mathbfcal L}_{j+\frac{1}{2}} \big( {\bf U}_{h}^{D}, {\bf U}_{h}^{C} \big): = \frac{ \overline{ {\bf U} }_{j+\frac{1}{2}}^{C} - \overline{ {\bf U} }_{j+\frac{1}{2}}^{D} }{\tau_{\max}}   
	- \frac{{\bf F}_1 ({\bf U}_{h}^{C}(x_{j+1})) - {\bf F}_1 ({\bf U}_{h}^{C}(x_{j}))}{\Delta x},
\end{align}
where and below we omit the $t$ dependence of all quantities for convenience. The scheme \eqref{eq:CDG-average-primal}--\eqref{eq:CDG-average-dual} is desired to satisfy the following PP property 
	\begin{equation}\label{eq:PP-1D}
	\overline{ {\bf U} }_{j}^{C} + \Delta t  {\mathbfcal L}_{j} \big( {\bf U}_{h}^{C}, {\bf U}_{h}^{D} \big) \in G, \qquad
	\overline{ {\bf U} }_{j+\frac{1}{2}}^{D} + \Delta t  {\mathbfcal L}_{j+\frac{1}{2}} \big( {\bf U}_{h}^{D}, {\bf U}_{h}^{C} \big) \in G  \qquad  \forall j,
\end{equation} 
under certain suitable CFL condition on the time stepsize $\Delta t$ and some proper conditions on 
the CDG solution polynomials which can be accessible by the PP limiter. The property \eqref{eq:PP-1D}  guarantees the cell averages staying in $G$ during the updating process, if one uses a strong-stability-preserving (SSP) method for time discretization, which is a convex combination of the  forward Euler method.

We now use the GQL approach to derive a theoretical analysis on property \eqref{eq:PP-1D} for the cell-averaged CDG scheme  \eqref{eq:CDG-average-primal}--\eqref{eq:CDG-average-dual}. We only focus on the case $k\ge 1$, because 
when $k=0$ the scheme \eqref{eq:CDG-average-primal}--\eqref{eq:CDG-average-dual} reduces to a first-order Lax--Friedrichs-like scheme, whose PP property can be concluded from \cite{Wu2017a}. 
Let $ \{ \hat{x}_{j-\frac14}^{(\nu)} \}_{\nu = 1}^{L} $ 
and $ \{ \hat{x}_{j+\frac14}^{(\nu)} \}_{\nu = 1}^{L} $
be the Gauss--Lobatto quadrature nodes transformed into the intervals $[x_{j-\frac{1}{2}}, x_{j}]$ and  
$[x_{j}, x_{j+\frac{1}{2}}]$, respectively. 
Denote $\hat{\mathbb{Q}}_{j}^{x} = \{ \hat{x}_{j-\frac14}^{(\nu)} \}_{\nu = 1}^{L}  \cup \{ \hat{x}_{j+\frac14}^{(\nu)} \}_{\nu = 1}^{L}$. 
Let $\{\hat{\omega}_{\nu}\}_{\nu = 1}^{L}$
be the associated weights satisfying $\sum_{\nu=1}^{L} \hat{\omega}_{\nu} = 1$ and $\hat{\omega}_{1}=\hat{\omega}_{L}$. 
We take $L=\lceil \frac{k+3}2 \rceil$, which gives $2L-3\geq k$, so that
the $L$-point Gauss--Lobatto quadrature rule is exact for polynomials of degree up to $k$. This implies 
	\begin{align}\label{eq:GLexactness}
	\overline{ {\bf U} }_{j}^{D}  &=  \frac{1}{\Delta x} \Big(
   \int_{x_{j-\frac12}}^{x_j} {\bf U}_{h}^{D} \mathrm{d}x +  \int_{x_{j}}^{x_{j+\frac12}} {\bf U}_{h}^{D} \mathrm{d}x \Big)  = 
	\sum_{ \sigma = \pm 1 } \sum_{\nu=1}^{L} \frac{\hat{\omega}_{\nu}}{2} {\bf U}_{h}^{D}( \hat{x}_{j + \frac{\sigma}4  }^{(\nu)} ) 
	= \frac{\hat{\omega}_{1}}{2} \left( {\bf U}_{j-\frac{1}{2}}^{D} + 
	{\bf U}_{j+\frac{1}{2}}^{D} \right) +{\bf \Pi}_j^D
\end{align}
with ${\bf U}_{j\pm\frac{1}{2}}^{D}:= {\bf U}_{h}^{D}(x_{j\pm\frac12})$ and ${\bf \Pi}_j^D:= \sum_{\nu=2}^{L} \frac{\hat{\omega}_{\nu}}{2} {\bf U}_{h}^{D}( \hat{x}_{j-\frac14}^{(\nu)} )  +
\sum_{\nu=1}^{L-1} \frac{\hat{\omega}_{\nu}}{2} {\bf U}_{h}^{D}( \hat{x}_{j+\frac14}^{(\nu)} )$.

\begin{theorem}[PP property of 1D standard CDG method]\label{thm:1DCDG}
	Assume that $\overline{ {\bf U} }_{j}^{C}, \overline{ {\bf U} }_{j+\frac12}^{D} \in G$ for all $j$ and the numerical solutions ${\bf U}_{h}^{C}(x)$ and ${\bf U}_{h}^{D}(x)$ satisfy
	\begin{align}\label{pp-condition}
		{\bf U}_{h}^{C}(x)  \in {G} \,, \quad {\bf U}_{h}^{D}(x)  \in {G}  \qquad  
		\forall  x \in \mathop{\cup}_j  \hat{\mathbb{Q}}_{j}^{x} , 
		\\ \label{eq:1D-DDF}
		B_{1,h}^{D}(x_{j\pm\frac{1}{2}})  = {\rm B_{const}} = B_{1,h}^{C}(x_{j\pm 1})  \qquad \forall j \,,
	\end{align}
	then the PP property \eqref{eq:PP-1D} holds under the CFL condition
\begin{equation}\label{eq:1DCFL}
		a_1 \frac{\Delta t}{\Delta x}   < \frac{\theta \hat{\omega}_{1}}{2} \,, \qquad
	\theta := \frac{\Delta t}{\tau_{\max}} \in (0,1] \,,
\end{equation}
	where  $a_1 :=  \max_{j} \{ \alpha_{1} \big( {\bf U}_{h}^{C}(x_{j+1}), {\bf U}_{h}^{C}(x_{j})\big), \alpha_{1} \big({\bf U}_{h}^{D}(x_{j+\frac{1}{2}}), {\bf U}_{h}^{D}(x_{j-\frac{1}{2}}) \big)\} $. 
\end{theorem}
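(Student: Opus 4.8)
The strategy is to verify, for the updated primal cell average $\widehat{\mathbf U}_j := \overline{\mathbf U}_j^C + \Delta t\,{\mathbfcal L}_j({\mathbf U}_h^C,{\mathbf U}_h^D)$, the two defining constraints of $G_\ast$ in the GQL representation \eqref{eq:newDefG}; the dual update $\overline{\mathbf U}_{j+\frac12}^D+\Delta t\,{\mathbfcal L}_{j+\frac12}({\mathbf U}_h^D,{\mathbf U}_h^C)$ is then handled by an identical argument with the roles of the two overlapping meshes and of ${\mathbf U}_h^C,{\mathbf U}_h^D$ interchanged. Writing $\theta=\Delta t/\tau_{\max}\in(0,1]$ and $\lambda=\Delta t/\Delta x$, one first recasts \eqref{eq:CDG-average-primal} as $\widehat{\mathbf U}_j=(1-\theta)\overline{\mathbf U}_j^C+\theta\,\overline{\mathbf U}_j^D-\lambda\big({\mathbf F}_1({\mathbf U}_{j+\frac12}^D)-{\mathbf F}_1({\mathbf U}_{j-\frac12}^D)\big)$, then inserts the Gauss--Lobatto identity \eqref{eq:GLexactness} for $\overline{\mathbf U}_j^D$ and regroups the two interface states with the flux differences. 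With $\beta:=\tfrac12\theta\hat\omega_1>0$ and $\alpha:=\beta/\lambda$ this yields
\[
\widehat{\mathbf U}_j=(1-\theta)\,\overline{\mathbf U}_j^C+\theta\,{\mathbf\Pi}_j^D+\beta\Big({\mathbf U}_{j+\frac12}^D-\tfrac{1}{\alpha}{\mathbf F}_1({\mathbf U}_{j+\frac12}^D)+{\mathbf U}_{j-\frac12}^D+\tfrac{1}{\alpha}{\mathbf F}_1({\mathbf U}_{j-\frac12}^D)\Big),
\]
so that $\widehat{\mathbf U}_j$ appears as a convex combination of $\overline{\mathbf U}_j^C$, of the nodal values $\{{\mathbf U}_h^D(\hat x):\hat x\in\hat{\mathbb Q}_j^x\}$ entering ${\mathbf\Pi}_j^D$ (with total weight $\theta(1-\hat\omega_1)$ inside $\theta\,{\mathbf\Pi}_j^D$), and of a two-point Lax--Friedrichs-type combination at the cell interfaces. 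Note that the CFL condition \eqref{eq:1DCFL} is exactly $a_1\lambda<\beta$, hence $\alpha=\beta/\lambda>a_1\ge\alpha_1({\mathbf U}_{j+\frac12}^D,{\mathbf U}_{j-\frac12}^D)$.

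For the density constraint $\widehat{\mathbf U}_j\cdot{\mathbf n}_1>0$ one inspects only the first component: the interface part contributes $\beta\,\rho_{j\pm\frac12}^D\big(1\mp(v_1)_{j\pm\frac12}^D/\alpha\big)>0$ since $\alpha>\alpha_1({\mathbf U}_{j+\frac12}^D,{\mathbf U}_{j-\frac12}^D)\ge|(v_1)_{j\pm\frac12}^D|$ by \eqref{eq:alpha_i}, and the other two pieces have positive first components by $\overline{\mathbf U}_j^C\in G$ and \eqref{pp-condition}. For the nonlinear constraint, one uses the GQL representation: fix arbitrary ${\mathbf v}^\ast,{\mathbf B}^\ast\in\mathbb R^3$ with ${\mathbf n}^\ast$ as in \eqref{eq:DefN}, and split the ``budget'' as $\tfrac{|{\mathbf B}^\ast|^2}{2}=(1-\theta)\tfrac{|{\mathbf B}^\ast|^2}{2}+\theta(1-\hat\omega_1)\tfrac{|{\mathbf B}^\ast|^2}{2}+\beta|{\mathbf B}^\ast|^2$ to match the three pieces. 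The first piece gives $(1-\theta)\big(\overline{\mathbf U}_j^C\cdot{\mathbf n}^\ast+\tfrac{|{\mathbf B}^\ast|^2}{2}\big)\ge0$ because $\overline{\mathbf U}_j^C\in G=G_\ast$; the second gives $\theta\sum(\mathrm{weight})\big({\mathbf U}_h^D(\hat x)\cdot{\mathbf n}^\ast+\tfrac{|{\mathbf B}^\ast|^2}{2}\big)>0$ by \eqref{pp-condition} and \cref{theo:eqDefG}, the sum being nonempty since $L=\lceil(k+3)/2\rceil\ge2$ for $k\ge1$; and the third equals $\beta$ times the left-hand side of \eqref{eq:MHD:LLFsplit} applied with $({\mathbf U},\tilde{\mathbf U})=({\mathbf U}_{j+\frac12}^D,{\mathbf U}_{j-\frac12}^D)$ and the same ${\mathbf v}^\ast,{\mathbf B}^\ast$, which is positive by \cref{theo:MHD:LLFsplit} since $\alpha>\alpha_1({\mathbf U}_{j+\frac12}^D,{\mathbf U}_{j-\frac12}^D)$. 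Adding the three pieces gives $\widehat{\mathbf U}_j\cdot{\mathbf n}^\ast+\tfrac{|{\mathbf B}^\ast|^2}{2}>0$ for all ${\mathbf v}^\ast,{\mathbf B}^\ast$, so $\widehat{\mathbf U}_j\in G_\ast=G$.

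The one place the discrete DF condition \eqref{eq:1D-DDF} intervenes --- and the only genuine subtlety in the 1D proof --- is that third piece: \eqref{eq:MHD:LLFsplit} in fact carries an extra, sign-indefinite term $\tfrac{1}{\alpha}\big(B_{1,j+\frac12}^D-B_{1,j-\frac12}^D\big)({\mathbf v}^\ast\cdot{\mathbf B}^\ast)$, which must disappear before the two-point combination alone can be declared positive --- indeed, as stressed in the remark after \cref{theo:MHD:LLFsplit}, the two-point inequality without it, i.e.\ \eqref{notrue}, is false in general --- and \eqref{eq:1D-DDF} forces $B_{1,j+\frac12}^D=B_{1,j-\frac12}^D={\rm B_{const}}$, killing it. In one space dimension this cancellation is automatic because $B_1$ is a constant, so the remaining work is merely the algebraic bookkeeping above; the real difficulty, deferred to the multidimensional sections, will be to identify what takes the place of \eqref{eq:1D-DDF} once $B_1$ varies. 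The dual cell average is treated verbatim, using Gauss--Lobatto quadrature on $I_{j+\frac12}$ --- whose interior and endpoint nodes again lie in $\cup_j\hat{\mathbb Q}_j^x$, so \eqref{pp-condition} supplies the needed positivity of ${\mathbf U}_h^C$ there --- together with $B_{1,h}^C(x_{j+1})=B_{1,h}^C(x_j)={\rm B_{const}}$ from \eqref{eq:1D-DDF}. Finally, since $G$ is convex and any SSP time discretization is a convex combination of forward-Euler stages, \eqref{eq:PP-1D} propagates to the fully discrete scheme.
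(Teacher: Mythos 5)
Your proposal is correct and follows essentially the same route as the paper: the same convex-combination rewriting of the updated average via the Gauss--Lobatto identity \eqref{eq:GLexactness} with $\alpha=\tfrac{\theta\hat\omega_1}{2}\tfrac{\Delta x}{\Delta t}$, the same direct check of the density component under the CFL condition, and the same application of \cref{theo:MHD:LLFsplit} together with the GQL representation \eqref{eq:newDefG}, with the sign-indefinite term $\tfrac{1}{\alpha}\big(B_{1,j+\frac12}^D-B_{1,j-\frac12}^D\big)({\bf v}^\ast\cdot{\bf B}^\ast)$ eliminated by \eqref{eq:1D-DDF}, exactly as in the paper's proof.
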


\begin{proof}
	Denote ${\bf U}_{\Delta t}^{C}:=\overline{{\bf U}}_{j}^{C} + \Delta t  {\mathbfcal L}_{j} \big( {\bf U}_{h}^{C}, {\bf U}_{h}^{D} \big)$. Thanks to \eqref{eq:GLexactness}, we have 
	\begin{equation}\label{eq:Uc}
		{\bf U}_{\Delta t}^{C} = (1-\theta)\overline{ {\bf U} }_{j}^{C} +  \theta {\bf \Pi}_j^D +  
		\frac{ \theta \hat{\omega}_{1}}{2} \left( 
		{\bf U}_{j+\frac{1}{2}}^{D} + {\bf U}_{j-\frac{1}{2}}^{D}  \right)
		- \frac{\Delta t}{\Delta x}\Big( {\bf F}_1( {\bf U}_{j+\frac{1}{2}}^{D} ) - {\bf F}_1({\bf U}_{j-\frac{1}{2}}^{D} )\Big).
	\end{equation}
    The condition \eqref{pp-condition} implies that ${\bf U}_{j\pm\frac{1}{2}}^{D} \in G$ and $\frac{1}{1-\hat \omega_1}{\bf \Pi}_j^D \in G$. It follows that 
	\begin{align}\notag
		{\bf U}_{\Delta t}^{C} \cdot {\bf n}_1
		= (1-\theta)\overline{\bf U}_{j}^{C} \cdot {\bf n}_1 +   \theta {\bf \Pi}_j^D  \cdot {\bf n}_1 
		 +   
		\big(\frac{\theta\hat{\omega}_{1}}{2} - \frac{\Delta t}{\Delta x} v_{1,j+\frac{1}{2}}^{D} \big)  \rho_{j+\frac{1}{2}}^{D}  + \big(\frac{\theta\hat{\omega}_{1}}{2} 
		+ \frac{\Delta t}{\Delta x}v_{1,j-\frac{1}{2}}^{D} \big)  \rho_{j-\frac{1}{2}}^{D} > 0,  
	\end{align}
	where the condition \eqref{eq:1DCFL} is used in the inequality. 
	Define $\alpha := \frac{\theta \hat{\omega}_{1}}{2} \cdot \frac{\Delta x}{\Delta t}$. We can rewrite \eqref{eq:Uc} as 
	\begin{equation}\label{eq:Uc2}
	{\bf U}_{\Delta t}^{C} = (1-\theta)\overline{ {\bf U} }_{j}^{C} +  \theta {\bf \Pi}_j^D +  
		\frac{ \theta \hat{\omega}_{1}}{2} \left( 
	{\bf U}_{j+\frac{1}{2}}^{D} 
	-\frac{ {\bf F}_1( {\bf U}_{j+\frac{1}{2}}^{D} ) }{\alpha} 
	+ {\bf U}_{j-\frac{1}{2}}^{D} + 
	\frac{ {\bf F}_1({\bf U}_{j-\frac{1}{2}}^{D} ) }{ \alpha }  \right).
\end{equation}	
	Note that the condition \eqref{eq:1DCFL} yields $\alpha > a_1 \ge \alpha_{1} ( {\bf U}_{j+\frac12}^D, {\bf U}_{j-\frac12}^D )$. 
	Thanks to \cref{theo:MHD:LLFsplit}, we have for any free auxiliary variables ${\bf v}^{\ast},{\bf B}^{\ast} \in \mathbb R^3$ that 
	\begin{align*} 
		{\bf U}_{\Delta t}^{C} \cdot {\bf n}^{\ast} + \frac{|{\bf B}^{\ast}|^2}{2} &  \overset{\mbox{\eqref{eq:Uc2}}}{=}
		(1-\theta) \left(  \overline{ {\bf U} }_{j}^{C}\cdot {\bf n}^{\ast} + \frac{|{\bf B}^{\ast}|^2}{2} \right) +  
		\theta  (1-\hat \omega_1) \left( \frac{1}{1-\hat \omega_1} {\bf \Pi}_j^D \cdot {\bf n}^{\ast} + \frac{|{\bf B}^{\ast}|^2}{2} \right)
		\\
		& \quad 
		+ \frac{ \theta \hat{\omega}_{1}}{2} \left[
		\left( 
		{\bf U}_{j+\frac{1}{2}}^{D} 
		-\frac{ {\bf F}_1( {\bf U}_{j+\frac{1}{2}}^{D} ) }{\alpha} 
		+ {\bf U}_{j-\frac{1}{2}}^{D} + 
		\frac{ {\bf F}_1({\bf U}_{j-\frac{1}{2}}^{D} ) }{ \alpha }  \right) \cdot {\bf n}^{\ast} + |{\bf B}^{\ast}|^2
		 \right]
		 \\
		 & \overset{\mbox{\eqref{eq:MHD:LLFsplit}}}{>} \frac{ \theta \hat{\omega}_{1}}{2} \cdot \frac{  B_{1,h}^D( x_{j-\frac12} ) - B_{1,h}^D( x_{j+\frac12} )  }{ \alpha } \overset{\mbox{\eqref{eq:1D-DDF}}} {=} 0.
	\end{align*}
	According to the GQL representation \eqref{eq:newDefG} in \cref{theo:eqDefG}, we obtain ${\bf U}_{\Delta t}^{C} 
	 \in {G_*}=G$. Similar arguments give $\overline{ {\bf U} }_{j+\frac{1}{2}}^{D} + \Delta t  {\mathbfcal L}_{j+\frac{1}{2}} ( {\bf U}_{h}^{D}, {\bf U}_{h}^{C} ) \in {G}$. 
	The proof is completed.
\end{proof}

\begin{remark}
	  \cref{thm:1DCDG} indicates that the PP property of the 1D CDG method 
	  is related to a discrete DF condition \eqref{eq:1D-DDF}, which is trivial and naturally satisfied. In fact, the 1D CDG method \eqref{eq:CDG-1d-primal}--\eqref{eq:CDG-1d-dual} 
	  automatically maintain the 1D globally DF property ${\bf B}_{1,h}^C(x)\equiv {\bf B}_{1,h}^D(x) \equiv {\rm B_{const}}$, because the fifth component of ${\bf F}_1({\bf U})$ is zero. 
	  	The condition \eqref{pp-condition} can be simply enforced by an existing PP limiter \cite{cheng} generalized from \cite{zhang2010,zhang2010b}. 
	Notice that the 1D globally DF property is not affected by the PP limiter.  
	As we will see, in the 2D case, 
	the related discrete DF condition is very different and highly nontrivial. 
\end{remark}

\section{Rigorous PP analysis of 2D standard CDG method}\label{section:pp2d}
In this section, we apply the GQL approach to rigorously analyze the positivity of the standard CDG method for the 2D MHD equations.  
Our analysis will reveal that the PP property is closely related to a discrete DF condition, which is very nontrivial and differs from that for the regular DG method in \cite{Wu2017a}. 
The extension of our analysis to 3D is quite straightforward and will be omitted in this paper. For convenience, we will employ the symbols $(x,y)$ to denote the 2D spatial coordinate variables.

Let $\{ I_{i,j}\}$ and $\{I_{i+\frac{1}{2}, j+\frac{1}{2}}\}$ 
 denote two overlapping uniform meshes for a rectangular domain
$\Omega = [x_{\min},x_{\max}] \times [y_{\min},y_{\max}]$ with $ I_{i,j} = (x_{i-\frac{1}{2}}, x_{i+\frac{1}{2}}) \times (y_{j-\frac{1}{2}}, y_{j+\frac{1}{2}}) $
and $ I_{i+\frac{1}{2}, j+\frac{1}{2}} = ( x_{i}, x_{i+1} ) \times ( y_{j}, y_{j+1} ) $.
The spatial stepsizes are constants, denoted by $\Delta x$ in the $x$-direction and $\Delta y$ in the $y$-direction. Define 
{\small\begin{equation*}
	\mathbb{V}_{h}^{C,k} = \left\{ {\bf w} \in [L^2(\Omega)]^8: w_\ell |_{I_{i,j}} \in \mathbb{P}^{k}(I_{i,j}) ~\forall i,j,\ell \right\},\quad 
	\mathbb{V}_{h}^{D,k} = \left\{ {\bf u} \in [L^2(\Omega)]^8: u_\ell |_{I_{i+\frac{1}{2},j+\frac{1}{2}}} \in \mathbb{P}^{k}(I_{i+\frac{1}{2},j+\frac{1}{2}}) ~\forall i,j,\ell \right\}
\end{equation*}}
with $\mathbb{P}^{k}(I)$ denoting the space of the 2D polynomials in $I$ with the total degree of at most $k$. 
The standard semi-discrete CDG method seeks the numerical solutions ${\bf U}_{h}^{C} \in \mathbb{V}_{h}^{C,k}$ and ${\bf U}_{h}^{D} \in \mathbb{V}_{h}^{D,k}$ such that 
\begin{align} \label{eq:CDG-2d-primal} 
	\int_{ I_{ij} } \frac{\partial {\bf U}_{h}^{C}}{\partial t} \cdot {\bf w} {\rm d}x {\rm d}y
	&= {\mathbfcal G}_{ij} \big( {\bf U}_{h}^{C}, {\bf U}_{h}^{D}, {\bf w} \big) \qquad \forall {\bf w} \in \mathbb{V}_{h}^{C,k}, 
	\\ \label{eq:CDG-2d-dual} 
	\int_{ I_{i+\frac{1}{2},j+\frac{1}{2}} } \frac{\partial {\bf U}_{h}^{D}}{\partial t} \cdot {\bf u} {\rm d}x {\rm d}y
	& = {\mathbfcal G}_{i+\frac12,j+\frac12} \big( {\bf U}_{h}^{D}, {\bf U}_{h}^{C}, {\bf u} \big) \qquad \forall  {\bf u} \in \mathbb{V}_{h}^{D,k}
\end{align}
with 
\begin{align} \nonumber
& {\mathbfcal G}_{ij} \big( {\bf U}_{h}^{C}, {\bf U}_{h}^{D}, {\bf w} \big) 
	:= 
	\frac{1}{\tau_{\max}}\int_{ I_{ij} } ( {\bf U}_{h}^{D} - {\bf U}_{h}^{C}) \cdot {\bf w} {\rm d}x {\rm d}y  
	+ \int_{I_{ij}} {\bf F}({\bf U}_{h}^{D}) \cdot \nabla {\bf w} {\rm d}x {\rm d}y \\ \nonumber
	& \qquad - \int_{y_{j-\frac{1}{2}}}^{y_{j+\frac{1}{2}}}  
	\Big({\bf F}_{1}({\bf U}_{h}^{D}(x_{i+\frac{1}{2}},y,t)) \cdot {\bf w}(x_{i+\frac{1}{2}}^{-},y) 
	-    {\bf F}_{1}({\bf U}_{h}^{D}(x_{i-\frac{1}{2}},y,t)) \cdot {\bf w}(x_{i-\frac{1}{2}}^{+},y) \Big)  {\rm d}y \\  \label{eq:2DCDG-primal-operator}
	& \qquad - \int_{x_{i-\frac{1}{2}}}^{x_{i+\frac{1}{2}}}  \Big({\bf F}_{2}({\bf U}_{h}^{D}(x, y_{j+\frac{1}{2}},t)) 
	\cdot {\bf w}(x, y_{j+\frac{1}{2}}^{-}) 
	- {\bf F}_{2}({\bf U}_{h}^{D}(x, y_{j-\frac{1}{2}},t)) \cdot {\bf w}(x, y_{j-\frac{1}{2}}^{+}) \Big) {\rm d}x,
	\\  \nonumber
	& {\mathbfcal G}_{i+\frac12,j+\frac12} \big( {\bf U}_{h}^{D}, {\bf U}_{h}^{C}, {\bf u} \big) 
	:=  \frac{1}{\tau_{\max}}\int_{ I_{i+\frac{1}{2},j+\frac{1}{2}} } ( {\bf U}_{h}^{C} - {\bf U}_{h}^{D}) \cdot {\bf u} {\rm d}x {\rm d}y + \int_{I_{i+\frac{1}{2},j+\frac{1}{2}}}  {\bf F}({\bf U}_{h}^{C}) \cdot \nabla {\bf u}  {\rm d}x {\rm d}y  \\ \nonumber
	& \qquad - \int_{y_{j}}^{y_{j+1}} \Big({\bf F}_{1}({\bf U}_{h}^{C}(x_{i+1},y,t))\cdot {\bf u}(x_{i+1}^{-},y)
	-    {\bf F}_{1}({\bf U}_{h}^{C}(x_{i},y,t))\cdot {\bf u}(x_{i}^{+},y) \Big)  {\rm d}y 
	\\ \label{eq:2DCDG-dual-operator}
	& \qquad  - \int_{x_{i}}^{x_{i+1}} \Big({\bf F}_{2}({\bf U}_{h}^{C}(x, y_{j+1},t)) \cdot {\bf u}(x, y_{j+1}^{-})
	- {\bf F}_{2}({\bf U}_{h}^{C}(x, y_{j},t )) \cdot {\bf u}(x, y_{j}^{+}) \Big) {\rm d}x.
\end{align}
Let  $\{ {x}_{j-\frac14}^{(\mu)} \}_{\mu = 1}^{N} $ 
and $ \{ {x}_{j+\frac14}^{(\mu)} \}_{\mu = 1}^{N} $
denote the $N$-point Gauss quadrature nodes transformed into the interval $\big[x_{j-\frac{1}{2}}, x_{j}\big]$ 
and  $\big[x_{j}, x_{j+\frac{1}{2}}\big]$, respectively. 
Denote  ${\mathbb{Q}}_{j}^{x} := \{ {x}_{j-\frac14}^{(\mu)} \}_{\mu = 1}^{N} \cup  \{ {x}_{j+\frac14}^{(\mu)} \}_{\mu = 1}^{N}$. 
Let  $\{\omega_{\mu}\}_{\mu = 1}^{N}$
be the associated weights satisfying $\sum_{\mu = 1}^{N} \omega_{\mu} = 1$. 
Similarly, use ${\mathbb{Q}}_{j}^{y} = \{ {y}_{j-\frac14}^{(\mu)} \}_{\mu = 1}^{N} \cup \{ {y}_{j+\frac14}^{(\mu)} \}_{\mu = 1}^{N}$ to denote the Gauss quadrature nodes in the $y$-direction.
For the accuracy requirement, we take $N = k+1$ for a $\mathbb{P}^k$-based CDG method.
With these quadrature rules approximating the cell interface integrals, the semi-discrete equations 
for the cell averages in the CDG method \eqref{eq:CDG-2d-primal}--\eqref{eq:CDG-2d-dual} can be written as
\begin{equation}\label{eq:CDG-2d-high}
	\frac{ {\rm d} \overline{ {\bf U} }_{ij}^{C} }{ {\rm d} t} = {\mathbfcal L}_{ij} \big( {\bf U}_{h}^{C}, {\bf U}_{h}^{D} \big), \qquad \frac{ {\rm d} \overline{ {\bf U} }_{i+\frac{1}{2},j+\frac{1}{2}}^{D} }{ {\rm d} t} = {\mathbfcal L}_{i+\frac{1}{2},j+\frac{1}{2}} \big( {\bf U}_{h}^{D}, {\bf U}_{h}^{C} \big)
\end{equation}
with 
{\small\begin{align}  \nonumber
	{\mathbfcal L}_{ij} \big( {\bf U}_{h}^{C}, {\bf U}_{h}^{D} \big) = &
	\frac{ \overline{ {\bf U} }_{ij}^{D} - \overline{ {\bf U} }_{ij}^{C}  }{\tau_{\max}}   
- \frac{ 1 }{\Delta x} \sum\limits_{ \sigma = \pm 1 }  \sum_{\mu=1}^N \frac{\omega_{\mu} }{2}
	\Big(  {\bf F}_{1}({\bf U}_{h}^{D}(x_{i+\frac{1}{2}}, {y}_{j+\frac{\sigma}4}^{(\mu)} )) 
	- {\bf F}_{1}({\bf U}_{h}^{D}(x_{i-\frac{1}{2}},{y}_{j+\frac{\sigma}4}^{(\mu)})) \Big) 
	\\ \label{eq:CDG-2d-primal-high} 
	&  - \frac{1}{\Delta y}  \sum_{ \sigma = \pm 1 }  \sum_{\mu=1}^N  \frac{\omega_{\mu}}{2}
	\Big(  {\bf F}_{2} ({\bf U}_{h}^{D}( {x}_{i+\frac{\sigma}4}^{(\mu)} , y_{j+\frac{1}{2}})) 
	- {\bf F}_{2} ({\bf U}_{h}^{D}( {x}_{i+\frac{\sigma}4}^{(\mu)}, y_{j-\frac{1}{2}})) \Big),
\\ \nonumber
	 {\mathbfcal L}_{i+\frac{1}{2},j+\frac{1}{2}} \big( {\bf U}_{h}^{D}, {\bf U}_{h}^{C} \big)  = &  
	\frac{  \overline{ {\bf U} }_{i+\frac{1}{2},j+\frac{1}{2}}^{C} 
		- \overline{ {\bf U} }_{i+\frac{1}{2},j+\frac{1}{2}}^{D}  }{\tau_{\max}}  
- \frac{ 1 }{\Delta x} \sum\limits_{ \sigma = \pm 1 }  \sum_{\mu=1}^N \frac{\omega_{\mu} }{2}
	\Big(  {\bf F}_{1}({\bf U}_{h}^{C}(x_{i+1}, {y}_{j+\frac{2+\sigma}4}^{(\mu)} )) 
	- {\bf F}_{1}({\bf U}_{h}^{C}(x_{i},{y}_{j+\frac{2+\sigma}4}^{(\mu)})) \Big) 
	\\  \label{eq:CDG-2d-dual-high} 
	&  - \frac{1}{\Delta y}  \sum_{ \sigma = \pm 1 }  \sum_{\mu=1}^N  \frac{\omega_{\mu}}{2}
	\Big(  {\bf F}_{2} ({\bf U}_{h}^{C}( {x}_{i+\frac{2+\sigma}4}^{(\mu)} , y_{j+1})) 
	- {\bf F}_{2} ({\bf U}_{h}^{C}( {x}_{i+\frac{2+\sigma}4}^{(\mu)}, y_{j})) \Big),
\end{align}}
where and below 
 we omit the $t$ dependence of all quantities for convenience.

As we have discussed in the 1D case, to achieve a PP CDG scheme, the main task is to preserve the evolved cell averages in the set $G$ during the updating process. More specifically, we wish the cell-averaged CDG scheme  
\eqref{eq:CDG-2d-high} satisfies 
the following PP property 
\begin{equation}\label{2D-PP-cellave}
	\overline{ {\bf U} }_{ij}^{C} + \Delta t {\mathbfcal L}_{ij} \big( {\bf U}_{h}^{C}, {\bf U}_{h}^{D} \big) \in G,  \quad \overline{ {\bf U} }_{i+\frac{1}{2},j+\frac{1}{2}}^{D} + \Delta t {\mathbfcal L}_{i+\frac{1}{2},j+\frac{1}{2}} \big( {\bf U}_{h}^{D}, {\bf U}_{h}^{C} \big) \in G  \quad \forall i,j,
\end{equation} 
under certain suitable CFL condition on the time stepsize $\Delta t$ and some proper conditions on 
the CDG solution polynomials. The property \eqref{2D-PP-cellave}  guarantees the cell averages staying in $G$ during the updating process, if one uses a strong-stability-preserving (SSP) method for time discretization, which is a convex combination of the forward Euler scheme.

We now employ the GQL approach to carry out a theoretical analysis on the property \eqref{2D-PP-cellave} for the cell-averaged CDG scheme  \eqref{eq:CDG-2d-high}.
As the 1D case, denote 
by  
$ \{ \hat{x}_{j-\frac14}^{(\nu)} \}_{\nu = 1}^{L} $ 
and $ \{ \hat{x}_{j+\frac14}^{(\nu)} \}_{\nu = 1}^{L} $ 
 the Gauss--Lobatto points in $[x_{j-\frac{1}{2}}, x_{j}]$ and  
$[x_{j}, x_{j+\frac{1}{2}}]$, respectively. Denote $\hat{\mathbb{Q}}_{j}^{x} := \{ \hat{x}_{j-\frac14}^{(\nu)} \}_{\nu = 1}^{L}  \cup \{ \hat{x}_{j+\frac14}^{(\nu)} \}_{\nu = 1}^{L}$. 
The Gauss--Lobatto points 
in the $y$-direction are 
similarly denoted as $\hat{\mathbb{Q}}_{j}^{y} :=  \{ \hat{y}_{j-\frac14}^{(\nu)} \}_{\nu = 1}^{L} \cup \{ \hat{y}_{j+\frac14}^{(\nu)} \}_{\nu = 1}^{L} $. 
We take $L=\lceil \frac{k+3}2 \rceil$, which gives $2L-3\geq k$, so that
the $L$-point Gauss--Lobatto quadrature rule is exact for polynomials of degree up to $k$. The exactness of the quadrature rules implies that 
\begin{equation}\label{eq:decomp}
	\overline{{\bf U}}_{ij}^{D} =  \sum\limits_{\nu = 1}^{L} \frac{\hat{\omega}_{\nu}}{2} \mathbf{\Pi}_{ij}^{\nu,-} 
	+  \sum\limits_{\nu = 1}^{L} \frac{\hat{\omega}_{\nu}}{2} \mathbf{\Pi}_{ij}^{\nu,+}, \quad 
		\overline{{\bf U}}_{i+\frac12,j+\frac12}^{C} =  \sum\limits_{\nu = 1}^{L} \frac{\hat{\omega}_{\nu}}{2} \mathbf{\Pi}_{i+\frac12,j+\frac12}^{\nu,-}
	+  \sum\limits_{\nu = 1}^{L} \frac{\hat{\omega}_{\nu}}{2} \mathbf{\Pi}_{i+\frac12,j+\frac12}^{\nu,+}
\end{equation}
with 
\begin{align*}
	\mathbf{\Pi}_{ij}^{\nu,\pm } &:= \frac{\lambda_{1}}{\lambda} \sum_{\sigma=\pm 1} \sum_{\mu=1}^N \frac{\omega_{\mu}}{2} {\bf U}_{h}^{D}( \hat{x}_{i\pm\frac{1}4}^{(\nu)}, {y}_{j+\frac{\sigma}4}^{(\mu)} )
	+ \frac{\lambda_{2}}{\lambda}  \sum_{\sigma=\pm 1} \sum_{\mu=1}^N  \frac{\omega_{\mu}}{2} {\bf U}_{h}^{D} ( {x}_{i+\frac{\sigma}4}^{(\mu)}, \hat{y}_{j\pm\frac{1}4}^{(\nu)} ),
	\\
	\mathbf{\Pi}_{i+\frac12,j+\frac12}^{\nu,\pm} &:= \frac{\lambda_{1}}{\lambda} \sum_{\sigma=\pm 1} \sum_{\mu=1}^N \frac{\omega_{\mu}}{2} {\bf U}_{h}^{C}( \hat{x}_{i+\frac12\pm\frac{1}4}^{(\nu)}, {y}_{j+\frac12+\frac{\sigma}4}^{(\mu)} )
+ \frac{\lambda_{2}}{\lambda}  \sum_{\sigma=\pm 1} \sum_{\mu=1}^N  \frac{\omega_{\mu}}{2} {\bf U}_{h}^{C} ( {x}_{i+\frac12+\frac{\sigma}4}^{(\mu)}, \hat{y}_{j+\frac12\pm\frac{1}4}^{(\nu)} ).
\end{align*}
Here $\lambda_{1} = \frac{ a_{1} \Delta t}{\Delta x}$, $ \lambda_{2} = \frac{ a_{2}\Delta t}{\Delta y} $, 
$\lambda = \lambda_{1} + \lambda_{2}$, with 
\begin{align}  \label{eq:defa1}
		a_1 & \ge  \max_{i,j} \max_{y \in {\mathbb{Q}}_{j}^{y}  } \left\{ \alpha_{1} \big(
	{\bf U}_{h}^{C}(x_{i+1},y),
	{\bf U}_{h}^{C}(x_{i}, y)
	\big),   \alpha_{1} \big(
	{\bf U}_{h}^{D}(x_{i+\frac12},y),
	{\bf U}_{h}^{D}(x_{i-\frac12}, y)
	\big)
	\right\}=:\hat a_1,
	\\ \label{eq:defa2}
	a_2  & \ge  \max_{i,j} \max_{x \in {\mathbb{Q}}_{i}^{x}  } \left\{ \alpha_{2} \big(
	{\bf U}_{h}^{C}(x,y_{j+1}),
	{\bf U}_{h}^{C}(x, y_j)
	\big),   \alpha_{2} \big(
	{\bf U}_{h}^{D}(x,y_{j+\frac12}),
	{\bf U}_{h}^{D}(x, y_{j-\frac12})
	\big)
	\right\}=:\hat a_2.
\end{align}
We introduce the discrete divergence operators for the numerical magnetic fields 
${\bf B}_{h}^{D}(x,y)$ and ${\bf B}_{h}^{C}(x,y)$:
	\begin{align}\label{eq:divijD}  
\begin{aligned}
		{\rm div}_{ij} {\bf B}_{h}^{D} 
	:&=  \frac{1 }{\Delta x}  \sum_{\sigma=\pm 1} \sum_{\mu=1}^N  \frac{\omega_{\mu} }{2}
	\Big( B_{1,h}^{D}( x_{i+\frac{1}{2}}, {y}_{j+\frac{\sigma}4}^{(\mu)} )  - B_{1,h}^{D}(x_{i-\frac{1}{2}},  {y}_{j+\frac{\sigma}4}^{(\mu)} ) \Big)  \\
	&+  \frac{ 1 }{\Delta y}  \sum_{\sigma=\pm 1} \sum_{\mu=1}^N   \frac{\omega_{\mu} }{2}
	\Big( B_{2,h}^{D}( {x}_{i+\frac{\sigma}4}^{(\mu)} ,y_{j+\frac{1}{2}})  - B_{2,h}^{D}( {x}_{i+\frac{\sigma}4}^{(\mu)} , y_{j-\frac{1}{2}}) \Big) \,, 
\end{aligned}
	\\  \label{eq:divijC} 
\begin{aligned}
		{\rm div}_{i+\frac{1}{2},j+\frac{1}{2}} {\bf B}_{h}^{C} 
	:&=  \frac{1}{\Delta x}  \sum_{\sigma=\pm 1} \sum_{\mu=1}^N  \frac{\omega_{\mu} }{2}
	\Big( B_{1,h}^{C}(x_{i+1}, {y}_{j+\frac{2+\sigma}4}^{(\mu)}  )  - B_{1,h}^{C}(x_{i},{y}_{j+\frac{2+\sigma}4}^{(\mu)})\Big) \\
	&+  \frac{1}{\Delta y} \sum_{\sigma=\pm 1} \sum_{\mu=1}^N   \frac{\omega_{\mu} }{2}
	\Big( B_{2,h}^{C}( {x}_{i+\frac{2+\sigma}4}^{(\mu)} ,y_{j+1})  - B_{2,h}^{C}( {x}_{i+\frac{2+\sigma}4}^{(\mu)} , y_{j}) \Big)  \,,
\end{aligned}
\end{align}
which are numerical approximations to the weak divergence $ \frac{1}{\Delta x \Delta y} \int_{\partial I} 
{\bf B} \cdot {\bm n}_{\partial I} {\rm d} s = 
\frac{1}{\Delta x \Delta y}\iint_{I}\nabla \cdot {\bf B} {\rm d}x{\rm d}y $
on the cells $I_{i,j}$ and $I_{i+\frac{1}{2},j+\frac{1}{2}}$, respectively, where ${\bm n}_{\partial I}$ is the outward pointing unit normal of $\partial I$.

\begin{theorem}[Bridge PP and DF properties for 2D standard CDG method]\label{theorem:positivity-2d-high}	
	Assume $\overline{ {\bf U} }_{ij}^{C}\in G$, $\overline{ {\bf U} }_{i+\frac{1}{2},j+\frac{1}{2}}^{D}\in G$ and that the numerical solutions ${\bf U}_{h}^{C}(x,y), {\bf U}_{h}^{D}(x,y)$ satisfy
	\begin{equation}\label{pp-condition-2d}
		{\bf U}_{h}^{C}(x,y)  \in {G} \,, \quad {\bf U}_{h}^{D}(x,y)  \in {G}  \quad  
		\forall (x,y) \in \mathop{\cup}_{i,j} \mathbb{Q}_{ij}, 
	\end{equation}
where $\mathbb{Q}_{ij} := (\mathbb{Q}_{i}^x\otimes \hat{\mathbb{Q}}_{j}^y ) \cup
( \hat{\mathbb{Q}}_{i}^x\otimes \mathbb{Q}_{j}^y )$. 
For all $i$ and $j$, the updated cell averages 
	 ${\bf U}_{\Delta t}^{C} := \overline{ {\bf U} }_{ij}^{C} + \Delta t {\mathbfcal L}_{ij} ( {\bf U}_{h}^{C}, {\bf U}_{h}^{D} )$ and  ${\bf U}_{\Delta t}^{D} := \overline{ {\bf U} }_{i+\frac{1}{2},j+\frac{1}{2}}^{D} + \Delta t {\mathbfcal L}_{i+\frac{1}{2},j+\frac{1}{2}} ( {\bf U}_{h}^{D}, {\bf U}_{h}^{C} )$ satisfy for any free auxiliary variables ${\bf v}^{\ast},{\bf B}^{\ast} \in \mathbb R^3$ that 
	\begin{align}\label{eq:2Ddensity}
		& {\bf U}_{\Delta t}^{C} \cdot {\bf n}_1 >0, \qquad  {\bf U}_{\Delta t}^{D} \cdot {\bf n}_1 >0,
		\\
		\label{eq:positivity-2d-high-primal}
		&
		{\bf U}_{\Delta t}^{C} \cdot {\bf n}^{\ast} + \frac{|{\bf B}^{\ast}|^2}{2}  > 
		\frac{\theta\hat{\omega}_{1}}{2} \Big(  ( \mathbf{\Pi}_{ij}^{L,-} + \mathbf{\Pi}_{ij}^{1,+} ) \cdot {\bf n}^{\ast} + |{\bf B}^{\ast}|^2 \Big)
		- \Delta t ({\bf v}^{\ast}\cdot{\bf B}^{\ast}) ( {\rm div}_{ij} {\bf B}_{h}^{D} )   \,,
		\\  \label{eq:positivity-2d-high-dual}
		&{\bf U}_{\Delta t}^{D} \cdot {\bf n}^{\ast} + \frac{|{\bf B}^{\ast}|^2}{2}  > 
		\frac{\theta\hat{\omega}_{1}}{2} \Big(  ( \mathbf{\Pi}_{i+\frac12,j+\frac12}^{L,-} + \mathbf{\Pi}_{i+\frac12,j+\frac12}^{1,+} ) \cdot {\bf n}^{\ast} + |{\bf B}^{\ast}|^2 \Big)
		- \Delta t ({\bf v}^{\ast}\cdot{\bf B}^{\ast}) ( {\rm div}_{i+\frac12,j+\frac12} {\bf B}_{h}^{C} )   \,,
	\end{align}
	under the CFL condition	
\begin{equation}\label{eq:2D-CFL}
	\lambda= \frac{ a_1 \Delta t}{\Delta x} +  \frac{ a_2 \Delta t}{\Delta y} 
	< \frac{\theta \hat{\omega}_{1}}{2} \,, \qquad
	\theta = \frac{\Delta t}{\tau_{\max}} \in (0,1] \,.
\end{equation}
	Furthermore, if ${\bf U}_{h}^{C}(x,y)$ and ${\bf U}_{h}^{D}(x,y)$ satisfy the following discrete DF condition
	\begin{equation}\label{div-condition-2d}
		{\rm div}_{i,j} {\bf B}_{h}^{D}  = 0 \,,  \qquad
		{\rm div}_{i+\frac{1}{2},j+\frac{1}{2}} {\bf B}_{h}^{C} = 0    \qquad \forall i,j \,,
	\end{equation}
	then \eqref{eq:2Ddensity}--\eqref{eq:positivity-2d-high-dual} imply ${\bf U}_{\Delta t}^{C}, {\bf U}_{\Delta t}^{D} \in {G}$, namely, the desired PP property \eqref{2D-PP-cellave}. 
\end{theorem}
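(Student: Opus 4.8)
The plan is to verify, for each $(i,j)$, the two conditions of the GQL representation \eqref{eq:newDefG} for both updated cell averages, namely ${\bf U}_{\Delta t}^{C}\cdot{\bf n}_1>0$, ${\bf U}_{\Delta t}^{D}\cdot{\bf n}_1>0$ and ${\bf U}_{\Delta t}^{C}\cdot{\bf n}^{\ast}+\frac{|{\bf B}^{\ast}|^2}{2}>0$, ${\bf U}_{\Delta t}^{D}\cdot{\bf n}^{\ast}+\frac{|{\bf B}^{\ast}|^2}{2}>0$ for all free auxiliary ${\bf v}^{\ast},{\bf B}^{\ast}$, and then to conclude via \cref{theo:eqDefG}. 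I treat only ${\bf U}_{\Delta t}^{C}$; the dual quantity is handled identically after exchanging the roles of the two meshes. From \eqref{eq:CDG-2d-high} and \eqref{eq:CDG-2d-primal-high} one has
\[
{\bf U}_{\Delta t}^{C} = (1-\theta)\,\overline{\bf U}_{ij}^{C} + \theta\,\overline{\bf U}_{ij}^{D} - \frac{\Delta t}{\Delta x}\big[\text{$x$-flux differences}\big] - \frac{\Delta t}{\Delta y}\big[\text{$y$-flux differences}\big],
\]
into which I substitute the Gauss--Lobatto decomposition \eqref{eq:decomp}, $\overline{\bf U}_{ij}^{D}=\sum_{\nu=1}^{L}\frac{\hat\omega_\nu}{2}(\mathbf{\Pi}_{ij}^{\nu,-}+\mathbf{\Pi}_{ij}^{\nu,+})$. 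Every piece $\mathbf{\Pi}_{ij}^{\nu,\pm}$ is a convex combination of values of ${\bf U}_h^{D}$ at nodes of $\mathbb{Q}_{ij}$, hence lies in $G=G_{\ast}$ by \eqref{pp-condition-2d} and the convexity of $G$. The structural observation is that the cell-interface traces at $x_{i\mp\frac12}$ and $y_{j\mp\frac12}$ that enter the flux differences appear exactly inside $\mathbf{\Pi}_{ij}^{1,-}$ and $\mathbf{\Pi}_{ij}^{L,+}$, whereas the cell-center-line traces appear inside $\mathbf{\Pi}_{ij}^{L,-}$ and $\mathbf{\Pi}_{ij}^{1,+}$; thus ${\bf U}_{\Delta t}^{C}$ is a convex combination of admissible states with flux corrections attached only to $\mathbf{\Pi}_{ij}^{1,-}$ and $\mathbf{\Pi}_{ij}^{L,+}$.

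For the density constraint I dot with ${\bf n}_1$ and, for each interface quadrature node, combine the coefficient $\frac{\theta\hat\omega_1}{2}\frac{\lambda_1}{\lambda}\frac{\omega_\mu}{2}$ multiplying $\rho_h^{D}(x_{i\mp\frac12},\cdot)$ in $\mathbf{\Pi}_{ij}^{1,-}+\mathbf{\Pi}_{ij}^{L,+}$ with the matching $\rho v_1$ flux term; the pair collapses to $\frac{\omega_\mu}{2}\frac{\Delta t}{\Delta x}(\alpha_1-v_{1})\rho_h^{D}$ at $x_{i+\frac12}$ plus $\frac{\omega_\mu}{2}\frac{\Delta t}{\Delta x}(\alpha_1+v_{1})\rho_h^{D}$ at $x_{i-\frac12}$, with $\alpha_1:=\frac{\theta\hat\omega_1 a_1}{2\lambda}$, and analogously in $y$ with $\alpha_2:=\frac{\theta\hat\omega_1 a_2}{2\lambda}$. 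The CFL condition \eqref{eq:2D-CFL} forces $\alpha_i>a_i$, which by \eqref{eq:defa1}--\eqref{eq:defa2} and \eqref{eq:alpha_i} dominates $|v_i|$ at those interface nodes, so every such term --- and hence ${\bf U}_{\Delta t}^{C}\cdot{\bf n}_1$, together with the positive contribution of the remaining pieces --- is positive, which is \eqref{eq:2Ddensity}.

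For the nonlinear constraint I compute ${\bf U}_{\Delta t}^{C}\cdot{\bf n}^{\ast}+\frac{|{\bf B}^{\ast}|^2}{2}$, distributing the term $\frac{|{\bf B}^{\ast}|^2}{2}$ along the same convex weights. Each interface pair then becomes $\frac{\theta\hat\omega_1}{2}\frac{\lambda_i}{\lambda}\frac{\omega_\mu}{2}$ times a quantity of the exact form $\big({\bf U}-\frac{{\bf F}_i({\bf U})}{\alpha_i}+\tilde{\bf U}+\frac{{\bf F}_i(\tilde{\bf U})}{\alpha_i}\big)\cdot{\bf n}^{\ast}+|{\bf B}^{\ast}|^2$, where ${\bf U},\tilde{\bf U}$ are the two interface traces (in $G$ by \eqref{pp-condition-2d}) and, by \eqref{eq:defa1}--\eqref{eq:defa2} and \eqref{eq:2D-CFL}, $\alpha_i>\alpha_i({\bf U},\tilde{\bf U})$; \cref{theo:MHD:LLFsplit} then bounds each such quantity below by $-\frac{B_i-\tilde B_i}{\alpha_i}({\bf v}^{\ast}\cdot{\bf B}^{\ast})$. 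Since $\frac{\theta\hat\omega_1}{2}\frac{\lambda_i}{\lambda}\frac{\omega_\mu}{2}\cdot\frac1{\alpha_i}$ equals $\frac{\omega_\mu}{2}\frac{\Delta t}{\Delta x}$ for $i=1$ and $\frac{\omega_\mu}{2}\frac{\Delta t}{\Delta y}$ for $i=2$, summing these weighted lower bounds over $\sigma$ and $\mu$ in both directions reconstructs, by the definition \eqref{eq:divijD}, exactly $-\Delta t\,({\bf v}^{\ast}\cdot{\bf B}^{\ast})\,{\rm div}_{ij}{\bf B}_h^{D}$. Discarding the strictly positive contributions of $(1-\theta)\overline{\bf U}_{ij}^{C}$ and of $\mathbf{\Pi}_{ij}^{\nu,\pm}$ for $2\le\nu\le L-1$, and retaining those of $\mathbf{\Pi}_{ij}^{L,-}$ and $\mathbf{\Pi}_{ij}^{1,+}$ in the stated grouped form, gives \eqref{eq:positivity-2d-high-primal}; \eqref{eq:positivity-2d-high-dual} follows in the same way using \eqref{eq:CDG-2d-dual-high} and \eqref{eq:divijC}. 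Finally, under the discrete DF condition \eqref{div-condition-2d} the divergence term drops out, and since $\mathbf{\Pi}_{ij}^{L,-},\mathbf{\Pi}_{ij}^{1,+}\in G=G_{\ast}$ we get $(\mathbf{\Pi}_{ij}^{L,-}+\mathbf{\Pi}_{ij}^{1,+})\cdot{\bf n}^{\ast}+|{\bf B}^{\ast}|^2>0$, so the right-hand side of \eqref{eq:positivity-2d-high-primal} is positive; combined with \eqref{eq:2Ddensity} and \cref{theo:eqDefG} this yields ${\bf U}_{\Delta t}^{C}\in G$, and likewise ${\bf U}_{\Delta t}^{D}\in G$, i.e.\ the PP property \eqref{2D-PP-cellave}.

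The step I expect to be the main obstacle is the bookkeeping behind the third paragraph: arranging the genuinely two-dimensional convex splitting so that \emph{every} interface flux difference is matched with the correct Gauss--Lobatto boundary piece in \emph{both} coordinate directions simultaneously, and then checking that the magnetic residuals generated by \cref{theo:MHD:LLFsplit} recombine with no leftover factors of $\alpha_i$ precisely into the discrete divergence operators \eqref{eq:divijD}--\eqref{eq:divijC}. It is this exact cancellation that pins down the particular form of the discrete DF condition \eqref{div-condition-2d} and makes it highly nontrivial in the multidimensional case, in contrast to the automatically satisfied 1D condition.
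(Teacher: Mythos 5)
Your proposal is correct and follows essentially the same route as the paper's proof: the same Gauss--Lobatto decomposition \eqref{eq:decomp} with the interface traces identified inside $\mathbf{\Pi}_{ij}^{1,-},\mathbf{\Pi}_{ij}^{L,+}$, the same use of \cref{theo:MHD:LLFsplit} whose magnetic correction terms recombine into ${\rm div}_{ij}{\bf B}_h^{D}$, and the same GQL conclusion. The only (cosmetic) difference is that you absorb the CFL slack into an inflated wave speed $\alpha_i=\theta\hat\omega_1 a_i/(2\lambda)$ so the weights match exactly, whereas the paper bounds the fluxes with $a_i$ and discards the nonnegative surplus $\left(\tfrac{\theta\hat\omega_1}{2}-\lambda\right)\bigl((\mathbf{\Pi}_{ij}^{L,+}+\mathbf{\Pi}_{ij}^{1,-})\cdot{\bf n}^{\ast}+|{\bf B}^{\ast}|^2\bigr)$.
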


\begin{proof}
	For $\ell \in \{1,2\}$ and any two admissible states ${\bf U}, \tilde{\bf U} \in G$, we observe that 
	\begin{align}\label{WKL3131}
		- ( {\bf F}_\ell ( {\bf U} ) - {\bf F}_\ell ( \tilde{\bf U} ) ) \cdot {\bf n}_1 &= 
		  \tilde \rho \tilde v_\ell - \rho v_\ell  > -  ( \tilde \rho + \rho ) \alpha_\ell ( {\bf U}, \tilde {\bf U} ) = - \alpha_\ell ( {\bf U}, \tilde {\bf U} )  ( {\bf U} +  \tilde {\bf U} ) \cdot {\bf n}_1,
		  \\ \label{WKL3133}
		  - ( {\bf F}_\ell ( {\bf U} ) - {\bf F}_\ell ( \tilde{\bf U} ) ) \cdot {\bf n}^{\ast} 
		  &\ge -\alpha_\ell ( {\bf U}, \tilde {\bf U} ) 
		  \left(  ( {\bf U} +  \tilde {\bf U} ) \cdot {\bf n}^{\ast} + |{\bf B}^\ast|^2 \right) - ( B_\ell -\tilde B_\ell ) ( {\bf v}^\ast \cdot {\bf B}^\ast ),  
	\end{align}
	where the second inequality \eqref{WKL3133} follows from \cref{theo:MHD:LLFsplit} for any free auxiliary variables ${\bf v}^{\ast},{\bf B}^{\ast} \in \mathbb R^3$. 
	We reformulate the updated cell average 
	${\bf U}_{\Delta t}^{C}$ as 
	\begin{equation}\label{WKL3021}
		{\bf U}_{\Delta t}^{C} = (1-\theta) \overline{{\bf U}}_{ij}^{C} + \theta \overline{{\bf U}}_{ij}^{D}
		+ {\bf \Pi}_F,
	\end{equation}
	where  
	\begin{align*}
		{\bf \Pi}_F:= & - \frac{ \Delta t }{\Delta x}  \sum_{\sigma=\pm 1} \sum_{\mu=1}^N  \frac{\omega_{\mu} }{2}
	\Big(  {\bf F}_{1}({\bf U}_{h}^{D}(x_{i+\frac{1}{2}},{y}_{j+\frac{\sigma}4}^{(\mu)})) 
	- {\bf F}_{1}({\bf U}_{h}^{D}(x_{i-\frac{1}{2}},{y}_{j+\frac{\sigma}4}^{(\mu)})) \Big) 
	\\
	&  - \frac{\Delta t}{\Delta y}  \sum_{\sigma=\pm 1} \sum_{\mu=1}^N \frac{\omega_{\mu}}{2}
	\Big(  {\bf F}_{2} ({\bf U}_{h}^{D}({x}_{i+\frac{\sigma}4}^{(\mu)}, y_{j+\frac{1}{2}})) 
	- {\bf F}_{2} ({\bf U}_{h}^{D}({x}_{i+\frac{\sigma}4}^{(\mu)}, y_{j-\frac{1}{2}})) \Big)
	\end{align*}
	with ${\bf U}_{h}^{D}(x_{i\pm \frac{1}{2}},{y}_{j + \frac{\sigma}4}^{(\mu)}) \in G$ and ${\bf U}_{h}^{D}({x}_{i+\frac{\sigma}4}^{(\mu)}, y_{j\pm\frac{1}{2}}) \in G$ according to the hypothesis \eqref{pp-condition-2d}. 
	By applying \eqref{WKL3131}, one can estimate the lower bound of ${\bf \Pi}_F \cdot {\bf n}_1$ as 
	\begin{align*}
		{\bf \Pi}_F \cdot {\bf n}_1  \overset{\mbox{\eqref{WKL3131}}}{>} & - a_1 \frac{ \Delta t }{\Delta x}  \sum_{\sigma=\pm 1} \sum_{\mu=1}^N  \frac{\omega_{\mu} }{2}
		\Big(  {\bf U}_{h}^{D}(x_{i+\frac{1}{2}},{y}_{j+\frac{\sigma}4}^{(\mu)})
		+ {\bf U}_{h}^{D}(x_{i-\frac{1}{2}},{y}_{j+\frac{\sigma}4}^{(\mu)}) \Big) \cdot {\bf n}_1 
		\\
		&  - a_2 \frac{\Delta t}{\Delta y}  \sum_{\sigma=\pm 1} \sum_{\mu=1}^N \frac{\omega_{\mu}}{2}
		\Big(  {\bf U}_{h}^{D}({x}_{i+\frac{\sigma}4}^{(\mu)}, y_{j+\frac{1}{2}})
		+ {\bf U}_{h}^{D}({x}_{i+\frac{\sigma}4}^{(\mu)}, y_{j-\frac{1}{2}}) \Big) \cdot {\bf n}_1 
		\\
		= & -\lambda_1    \sum_{\sigma=\pm 1} \sum_{\mu=1}^N  \frac{\omega_{\mu} }{2}
		\Big(  {\bf U}_{h}^{D}( \hat{x}_{i + \frac{1}4}^{(L)} ,{y}_{j+\frac{\sigma}4}^{(\mu)})
		+ {\bf U}_{h}^{D}( \hat{x}_{i - \frac{1}4}^{(1)} ,{y}_{j+\frac{\sigma}4}^{(\mu)}) \Big) \cdot {\bf n}_1 
				\\
		&  - \lambda_2 \sum_{\sigma=\pm 1} \sum_{\mu=1}^N \frac{\omega_{\mu}}{2}
		\Big(  {\bf U}_{h}^{D}({x}_{i+\frac{\sigma}4}^{(\mu)},  \hat{y}_{j + \frac{1}4}^{(L)} )
		+ {\bf U}_{h}^{D}({x}_{i+\frac{\sigma}4}^{(\mu)}, \hat{y}_{j - \frac{1}4}^{(1)} ) \Big) \cdot {\bf n}_1 
		\\
		=& - \lambda \left( \mathbf{\Pi}_{ij}^{L,+ } + \mathbf{\Pi}_{ij}^{1,- } \right) \cdot {\bf n}_1, 
	\end{align*}
	where $\hat{x}_{i + \frac{1}4}^{(L)}=x_{i+\frac{1}{2}}$, $\hat{x}_{i - \frac{1}4}^{(1)}=x_{i-\frac{1}{2}}$, $\hat{y}_{j + \frac{1}4}^{(L)}=y_{j+\frac{1}{2}}$, and $\hat{y}_{j - \frac{1}4}^{(1)}=y_{j-\frac{1}{2}}$ are used. 
	It then follows from \eqref{WKL3021} that 
	 \begin{align*}
	 	{\bf U}_{\Delta t}^{C} \cdot {\bf n}_1  =~ & (1-\theta) \overline{{\bf U}}_{ij}^{C} \cdot {\bf n}_1  + \theta \overline{{\bf U}}_{ij}^{D} \cdot {\bf n}_1 
	 	+ {\bf \Pi}_F  \cdot {\bf n}_1  
	 	\\
	 	 >~  & \theta \overline{{\bf U}}_{ij}^{D} \cdot {\bf n}_1 - \lambda \left( \mathbf{\Pi}_{ij}^{L,+ } + \mathbf{\Pi}_{ij}^{1,- } \right) \cdot {\bf n}_1
	 	\\
	 	 \overset{\mbox{\eqref{eq:decomp}}}{=} &
	 	\theta \left(
	 	\sum\limits_{\nu = 1}^{L} \frac{\hat{\omega}_{\nu}}{2} \mathbf{\Pi}_{ij}^{\nu,-} 
	 	+  \sum\limits_{\nu = 1}^{L} \frac{\hat{\omega}_{\nu}}{2} \mathbf{\Pi}_{ij}^{\nu,+} \right) \cdot {\bf n}_1  
	 	- \lambda \left( \mathbf{\Pi}_{ij}^{L,+ } + \mathbf{\Pi}_{ij}^{1,- } \right) \cdot {\bf n}_1
	 	\\
	 	\ge~  &
	 	\left( \frac{\theta \hat{\omega}_{1}}{2} -  \lambda \right) \left( \mathbf{\Pi}_{ij}^{L,+ } + \mathbf{\Pi}_{ij}^{1,- } \right) \cdot {\bf n}_1 \overset{\mbox{\eqref{eq:2D-CFL}}}{>}  0, 
	 \end{align*}
	where we have used the identity \eqref{eq:decomp}, the CFL condition \eqref{eq:2D-CFL}, and 
	$\mathbf{\Pi}_{ij}^{\nu,\pm} \in G$ which follows from the convexity of $G$ and the hypothesis \eqref{pp-condition-2d}. 
	Next, we apply \eqref{WKL3133} to estimate the lower bound of ${\bf \Pi}_F \cdot {\bf n}^\ast$ for free auxiliary variables ${\bf v}^{\ast},{\bf B}^{\ast} \in \mathbb R^3$ as follows: 
	\begin{align} \notag
	{\bf \Pi}_F \cdot {\bf n}^\ast  \overset{\mbox{\eqref{WKL3133}}}{\ge} & - \lambda_{1} \sum_{\sigma=\pm 1} \sum_{\mu=1}^N \frac{\omega_{\mu}}{2} 
	\Big( {\bf U}_{h}^{D}(x_{i+\frac{1}{2}},{y}_{j+\frac{\sigma}4}^{(\mu)}) + 
	{\bf U}_{h}^{D}(x_{i-\frac{1}{2}},{y}_{j+\frac{\sigma}4}^{(\mu)}) \Big) \cdot {\bf n}^{\ast} - \lambda_{1} |{\bf B}^{\ast}|^2 \\ \notag
	& - \frac{\Delta t }{\Delta x}  \sum_{\sigma=\pm 1} \sum_{\mu=1}^N \frac{\omega_{\mu} }{2}
	\Big( B_{1,h}^{D}(x_{i+\frac{1}{2}},{y}_{j+\frac{\sigma}4}^{(\mu)})  - B_{1,h}^{D}(x_{i-\frac{1}{2}},{y}_{j+\frac{\sigma}4}^{(\mu)} ) \Big)
	({\bf v}^{\ast}\cdot{\bf B}^{\ast}) \\ \notag
	& - \lambda_{2}  \sum_{\sigma=\pm 1} \sum_{\mu=1}^N  \frac{\omega_{\mu}}{2}
	\Big( {\bf U}_{h}^{D}({x}_{i+\frac{\sigma}4}^{(\mu)}, y_{j+\frac{1}{2}}) + {\bf U}_{h}^{D}( {x}_{i+\frac{\sigma}4}^{(\mu)}, y_{j-\frac{1}{2}}) \Big)
	\cdot {\bf n}^{\ast} - \lambda_{2} |{\bf B}^{\ast}|^2 \\ \notag
	& -  \frac{\Delta t}{\Delta y} \sum_{\sigma=\pm 1} \sum_{\mu=1}^N \frac{\omega_{\mu} }{2}
	\Big( B_{2,h}^{D}({x}_{i+\frac{\sigma}4}^{(\mu)},y_{j+\frac{1}{2}})  - B_{2,h}^{D}({x}_{i+\frac{\sigma}4}^{(\mu)}, y_{j-\frac{1}{2}}) \Big)
	({\bf v}^{\ast}\cdot{\bf B}^{\ast})  \\ \label{WKL2001}
	= &  - \lambda  \Big( ( \mathbf{\Pi}_{ij}^{L,+ } + \mathbf{\Pi}_{ij}^{1,- } ) \cdot {\bf n}^{\ast}  + |{\bf B}^{\ast}|^2 \Big) 
	- \Delta t ( {\rm div}_{ij} {\bf B}_{h}^{D} ) ({\bf v}^{\ast}\cdot{\bf B}^{\ast}) \,.
\end{align}	 
Combining this estimate with \eqref{WKL3021} leads to 
	\begin{align*}
		 {\bf U}_{\Delta t}^{C} \cdot {\bf n}^{\ast} + \frac{|{\bf B}^{\ast}|^2}{2}  \overset{\mbox{\eqref{WKL3021}}}{=} &  
		(1-\theta) \left(  \overline{{\bf U}}_{ij}^{C} \cdot {\bf n}^{\ast} + \frac{|{\bf B}^{\ast}|^2}{2} \right) + \theta \left( \overline{{\bf U}}_{ij}^{D} \cdot {\bf n}^{\ast}  + \frac{|{\bf B}^{\ast}|^2}{2} \right) + {\bf \Pi}_F \cdot {\bf n}^{\ast}
		\\  
		\overset{\mbox{\eqref{eq:2D-CFL}}}{\ge}   &  
		  \theta \left( \overline{{\bf U}}_{ij}^{D} \cdot {\bf n}^{\ast}  + \frac{|{\bf B}^{\ast}|^2}{2} \right) + {\bf \Pi}_F \cdot {\bf n}^{\ast}
		\\
		\overset{\mbox{\eqref{WKL2001}}}{\ge} &  \theta \left( \overline{{\bf U}}_{ij}^{D} \cdot {\bf n}^{\ast}  + \frac{|{\bf B}^{\ast}|^2}{2} \right) - \lambda  \Big( ( \mathbf{\Pi}_{ij}^{L,+ } + \mathbf{\Pi}_{ij}^{1,- } ) \cdot {\bf n}^{\ast}  + |{\bf B}^{\ast}|^2 \Big) 
		- \Delta t ( {\rm div}_{ij} {\bf B}_{h}^{D} ) ({\bf v}^{\ast}\cdot{\bf B}^{\ast})
		\\
		 \overset{\mbox{\eqref{eq:decomp}}}{=} &
		\theta 
		\sum\limits_{\nu = 1}^{L} \frac{\hat{\omega}_{\nu}}{2} \left( \mathbf{\Pi}_{ij}^{\nu,-} \cdot {\bf n}^{\ast}  + \frac{|{\bf B}^{\ast}|^2}{2} \right) 
		+ \theta  \sum\limits_{\nu = 1}^{L} \frac{\hat{\omega}_{\nu}}{2} \left( \mathbf{\Pi}_{ij}^{\nu,+} 
		\cdot {\bf n}^{\ast}  + \frac{|{\bf B}^{\ast}|^2}{2} \right) 
		\\
		& - \lambda  \Big( ( \mathbf{\Pi}_{ij}^{L,+ } + \mathbf{\Pi}_{ij}^{1,- } ) \cdot {\bf n}^{\ast}  + |{\bf B}^{\ast}|^2 \Big) 
		- \Delta t ( {\rm div}_{ij} {\bf B}_{h}^{D} ) ({\bf v}^{\ast}\cdot{\bf B}^{\ast})
		\\
		\ge ~ &   \frac{\theta \hat{\omega}_{1}}{2}  \Big( ( \mathbf{\Pi}_{ij}^{L,- } + \mathbf{\Pi}_{ij}^{1,+ } ) \cdot {\bf n}^{\ast}  + |{\bf B}^{\ast}|^2 \Big)
		\\
		& + \left( \frac{\theta \hat{\omega}_{1}}{2} -  \lambda \right)  \Big( ( \mathbf{\Pi}_{ij}^{L,+ } + \mathbf{\Pi}_{ij}^{1,- } ) \cdot {\bf n}^{\ast}  + |{\bf B}^{\ast}|^2 \Big)  
		- \Delta t ( {\rm div}_{ij} {\bf B}_{h}^{D} ) ({\bf v}^{\ast}\cdot{\bf B}^{\ast})
		\\
		\overset{\mbox{\eqref{eq:2D-CFL}}}{>}   & \frac{\theta \hat{\omega}_{1}}{2}  \Big( ( \mathbf{\Pi}_{ij}^{L,- } + \mathbf{\Pi}_{ij}^{1,+ } ) \cdot {\bf n}^{\ast}  + |{\bf B}^{\ast}|^2 \Big)
		- \Delta t ( {\rm div}_{ij} {\bf B}_{h}^{D} ) ({\bf v}^{\ast}\cdot{\bf B}^{\ast}),
	\end{align*}
	which gives \eqref{eq:positivity-2d-high-primal} and further implies that 
	$$
	{\bf U}_{\Delta t}^{C} \cdot {\bf n}^{\ast} + \frac{|{\bf B}^{\ast}|^2}{2}  + \Delta t ( {\rm div}_{ij} {\bf B}_{h}^{D} ) ({\bf v}^{\ast}\cdot{\bf B}^{\ast}) >0. 
	$$
	Therefore, if ${\bf U}_{h}^{D}(x,y)$ further satisfies the discrete DF condition ${\rm div}_{ij} {\bf B}_{h}^{D}=0$, then we  obtain 
	$$
	{\bf U}_{\Delta t}^{C} \cdot {\bf n}^{\ast} + \frac{|{\bf B}^{\ast}|^2}{2} >0 \qquad \forall {\bf v}^\ast,{\bf B}^\ast \in \mathbb R^3,
	$$
	which along with ${\bf U}_{\Delta t}^{C} \cdot {\bf n}_1>0$ implies ${\bf U}_{\Delta t}^{C} \in G_*=G$, according to the GQL representation in \cref{theo:eqDefG}. 
	Similarly, one can derive ${\bf U}_{\Delta t}^{D} \cdot {\bf n}_1 >0$ and the estimate  \eqref{eq:positivity-2d-high-dual} for ${\bf U}_{\Delta t}^{D}$, which further lead to 
	${\bf U}_{\Delta t}^{D} \in G_*=G$ under the discrete DF condition \eqref{div-condition-2d}. 
	The proof is completed. 
\end{proof}

\begin{remark}
	\cref{theorem:positivity-2d-high} shows that the 
	PP property of 2D standard CDG method is closely related to a discrete DF condition \eqref{div-condition-2d}, 
	which is significantly different from both the trivial 1D version \eqref{eq:1D-DDF} 
	and the non-central DG version found in \cite{Wu2017a}.  
				As seen from  \eqref{eq:positivity-2d-high-primal} and \eqref{div-condition-2d}, 
	the discrete DF condition on the primal mesh is defined by the numerical solution on the dual mesh; see 
	\cref{eq:FigDDF}. 
\end{remark}

\begin{remark}	
	As the free auxiliary variables $\{ {\bf v}^\ast, {\bf B}^\ast \}$ are necessary 
	in \eqref{eq:positivity-2d-high-primal}--\eqref{eq:positivity-2d-high-dual}, 
	the GQL approach is essential for bridging 
	the PP and discrete DF properties. It seems very challenging (if not impossible) 
	to draw the connection between the PP and discrete DF properties without using the GQL approach. 
	Since the states at all the quadrature points in the CDG schemes are coupled by the discrete DF condition,  the PP analysis is very nontrivial, and some standard PP techniques, which typically rely on reformulating a 2D scheme into convex combination of formal 1D PP schemes \cite{zhang2010b,li2016maximum}, are inapplicable in our analysis. 
\end{remark}

\begin{theorem}[Necessity of discrete DF condition for standard CDG method]\label{thm:necessity}
	For any given CFL number ${\tt C}>0$ and any $\theta \in (0,1]$, the 2D standard CDG method, even under the condition \eqref{pp-condition-2d}, is not always PP in general, if the proposed 
	discrete DF condition \eqref{div-condition-2d} is violated. 
\end{theorem}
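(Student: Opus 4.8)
\emph{Strategy.} Since the claim is a failure statement, I would prove it by exhibiting an explicit one forward-Euler step in which positivity is lost. It suffices to construct, for the prescribed CFL number ${\tt C}$ and any $\theta\in(0,1]$, admissible cell averages and CDG polynomials $({\bf U}_h^C,{\bf U}_h^D)$ obeying \eqref{pp-condition-2d} but violating the discrete DF condition \eqref{div-condition-2d} in one primal cell $I_{ij}$, such that the updated average ${\bf U}_{\Delta t}^C=\overline{\bf U}_{ij}^C+\Delta t\,{\mathbfcal L}_{ij}({\bf U}_h^C,{\bf U}_h^D)$ leaves $G$. By the GQL representation in \cref{theo:eqDefG}, it is enough to exhibit \emph{one} pair of free auxiliary variables $({\bf v}^{\ast},{\bf B}^{\ast})\in\mathbb{R}^3\times\mathbb{R}^3$ with ${\bf U}_{\Delta t}^C\cdot{\bf n}^{\ast}+\frac12|{\bf B}^{\ast}|^2\le 0$.

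\emph{The data.} I would take ${\bf U}_h^D$ piecewise constant on the four dual cells abutting $I_{ij}$, equal to some ${\bf U}\in G$ on the two dual cells contained in $\{x<x_i\}$ and equal to $\tilde{\bf U}\in G$ on the two dual cells in $\{x>x_i\}$ (and to ${\bf U}$, say, on all remaining dual cells), where ${\bf U}$ and $\tilde{\bf U}$ differ only in the first magnetic component; and ${\bf U}_h^C$ piecewise constant with $\overline{\bf U}_{ij}^C=\frac12({\bf U}+\tilde{\bf U})$. Constants belong to $\mathbb{P}^k$, so this is legitimate data, and \eqref{pp-condition-2d} together with the admissibility of all cell averages is trivial (the only states that appear are ${\bf U},\tilde{\bf U},\frac12({\bf U}+\tilde{\bf U})$, all in the convex set $G$). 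Because the data is constant in $y$, every $y$-interface flux in ${\mathbfcal L}_{ij}$ cancels and each $x$-face of $I_{ij}$ carries a single constant state; a short bookkeeping then yields $\overline{\bf U}_{ij}^D=\frac12({\bf U}+\tilde{\bf U})$, the discrete-divergence value ${\rm div}_{ij}{\bf B}_h^D=(\tilde B_1-B_1)/\Delta x$ (nonzero as soon as $\tilde B_1\neq B_1$, so \eqref{div-condition-2d} fails), and --- for every $\theta$ ---
\[
{\bf U}_{\Delta t}^C=\tfrac12({\bf U}+\tilde{\bf U})-\tfrac{\Delta t}{\Delta x}\bigl({\bf F}_1(\tilde{\bf U})-{\bf F}_1({\bf U})\bigr).
\]

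\emph{Driving it negative.} I would pick ${\bf v}^{\ast}={\bf v}$ and ${\bf B}^{\ast}={\bf B}$, the velocity and magnetic field of ${\bf U}$, so that ${\bf U}\cdot{\bf n}^{\ast}+\frac12|{\bf B}^{\ast}|^2=\mathcal{E}({\bf U})$, and write $\tilde{\bf U}={\bf U}+\epsilon\,{\bf e}_{B_1}$ for small $\epsilon>0$, with ${\bf e}_{B_1}$ the coordinate unit vector in the $B_1$ slot. A one-line computation from the flux gives ${\bf e}_{B_1}\cdot{\bf n}^{\ast}=-B_1$ and $\bigl(\partial_{B_1}{\bf F}_1({\bf U})\bigr)\cdot{\bf n}^{\ast}=v_2B_2+v_3B_3$, whence
\[
{\bf U}_{\Delta t}^C\cdot{\bf n}^{\ast}+\tfrac12|{\bf B}^{\ast}|^2
=\mathcal{E}({\bf U})-\epsilon\Bigl(\tfrac{B_1}{2}+\tfrac{\Delta t}{\Delta x}\,(v_2B_2+v_3B_3)\Bigr)+O(\epsilon^2),
\]
the remainder being absent for the ideal-gas law (where ${\bf F}_1$ is quadratic in $B_1$ and, with $v_1=0$, contributes no second-order term) and harmless for a general smooth EOS. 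Choosing the reference state with $v_1=B_3=v_3=0$ and $B_1=B_2=1$, and then fixing $\mathcal{E}({\bf U})$ (equivalently the internal energy) in the interval $\bigl(\epsilon B_1+\tfrac{\epsilon^2}{2},\ \tfrac{\epsilon}{2}B_1+\tfrac{\Delta t}{\Delta x}\epsilon v_2-O(\epsilon^2)\bigr]$, the left endpoint ensures $\tilde{\bf U}\in G$ (since $\mathcal{E}(\tilde{\bf U})=\mathcal{E}({\bf U})-\epsilon B_1-\tfrac{\epsilon^2}{2}>0$) and the right endpoint forces ${\bf U}_{\Delta t}^C\cdot{\bf n}^{\ast}+\frac12|{\bf B}^{\ast}|^2\le 0$, hence ${\bf U}_{\Delta t}^C\notin G_{\ast}=G$. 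The interval is nonempty exactly when $\tfrac{\Delta t}{\Delta x}v_2>\tfrac{B_1}{2}+O(\epsilon)$, which I would arrange by taking the transverse velocity $v_2$ large: $v_2$ does not enter the $x$-wave speed $a_1=\alpha_1({\bf U},\tilde{\bf U})$ (only the $y$-wave speed $a_2$), so with the mesh aspect ratio $\Delta x/\Delta y$ small the ratio $\Delta t/\Delta x$ stays comparable to ${\tt C}/a_1$, and enlarging $v_2$ pushes $\tfrac{\Delta t}{\Delta x}v_2$ past $\tfrac{B_1}{2}$ for any prescribed ${\tt C}$ while \eqref{eq:2D-CFL} still holds with that ${\tt C}$.

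\emph{Main difficulty.} The only delicate point is this compatibility: the demands ``$\tilde{\bf U}\in G$'' and ``${\bf U}_{\Delta t}^C\notin G$'' both pin $\mathcal{E}({\bf U})$ to an $O(\epsilon)$ window, and closing that window forces the transverse magnetic-velocity coupling $\tfrac{\Delta t}{\Delta x}(v_2B_2+v_3B_3)$ above $\tfrac{B_1}{2}$ uniformly in ${\tt C}$; one must verify both that this can be done within \eqref{eq:2D-CFL} and that the $y$-flux cancellation --- hence the clean formula for ${\bf U}_{\Delta t}^C$ --- survives for arbitrarily large $v_2$ (it does, $v_2$ being constant across the $y$-faces, which is also why the $O(\epsilon^2)$ remainder stays bounded uniformly in $v_2$). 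All other steps are elementary bookkeeping; if one prefers to avoid the small-$\epsilon$ asymptotics entirely, one may keep ${\bf U},\tilde{\bf U}$ at a fixed finite separation in $B_1$ and evaluate ${\bf F}_1(\tilde{\bf U})-{\bf F}_1({\bf U})$ exactly, at the cost of a longer but still routine algebraic verification.
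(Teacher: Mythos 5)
Your construction is sound and it reaches the same conclusion as the paper, but by a genuinely different counterexample. I checked the key identity: with $\tilde{\bf U}={\bf U}+\epsilon\,{\bf e}_{B_1}$ and ${\bf v}^{\ast}={\bf v}$, ${\bf B}^{\ast}={\bf B}$, one indeed gets $\bigl(\partial_{B_1}{\bf F}_1\bigr)\cdot{\bf n}^{\ast}=v_2B_2+v_3B_3$ (the $\partial_{B_1}p$ contributions cancel), and with $v_1=v_3=B_3=0$ and the ideal EOS the flux difference dotted with ${\bf n}^{\ast}$ is \emph{exactly} $\epsilon v_2B_2$, so your inequality needs no remainder at all; the bookkeeping giving $\overline{\bf U}_{ij}^D=\tfrac12({\bf U}+\tilde{\bf U})$, the cancellation of the $y$-fluxes, ${\rm div}_{ij}{\bf B}_h^D=\epsilon/\Delta x\neq0$, and the $\theta$-independent update formula are all correct. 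The genuine difference is in how the two proofs beat an arbitrarily small CFL number ${\tt C}$: your mechanism (transverse coupling $v_2B_2$ with $v_1=0$) is dead on a square mesh, since $\hat a_2\ge v_2$ forces $\tfrac{\Delta t}{\Delta x}v_2\le\theta{\tt C}$ there, so the anisotropic mesh $\Delta x/\Delta y\ll1$ is not a convenience but an essential ingredient of your route; the paper instead keeps $\Delta x=\Delta y$, uses states with nonzero \emph{normal} velocity whose ${\tt C}$-dependence is built into the data (the parameter $\delta=\min\{{\tt C}/8,1\}$), bounds the unknown exact wave speeds $\hat a_1,\hat a_2$ by explicit constants $\tilde a_1,\tilde a_2$ and exploits convexity of $G$ to pass to a tractable state, and then derives the contradiction by letting ${\tt p}\to0^+$ in the internal energy. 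What your approach buys is an exact, limit-free inequality and a transparent single GQL witness $({\bf v}^{\ast},{\bf B}^{\ast})=({\bf v},{\bf B})$ certifying ${\bf U}_{\Delta t}^C\notin G_{\ast}=G$; what the paper's buys is a counterexample on a uniform square mesh with bounded data and no large parameters. To make yours fully rigorous you should (i) state explicitly that ``CFL number ${\tt C}$'' means $\tau_{\max}(\hat a_1/\Delta x+\hat a_2/\Delta y)={\tt C}$ with $\Delta t=\theta\tau_{\max}$ (as the paper does), since \eqref{eq:2D-CFL} itself carries the fixed constant $\theta\hat\omega_1/2$ rather than ${\tt C}$, and (ii) order the parameter choices to avoid circularity between $v_2$, $\hat a_2$, and $\Delta t/\Delta x$: fix $\rho=1$, ${\bf B}=(1,1,0)$, $\epsilon$, and $\mathcal{E}({\bf U})$ near the bottom of your window first, so that $\hat a_1$ is bounded by an absolute constant independent of $v_2$; then choose $v_2$ large depending on $\theta{\tt C}$ and that bound; then shrink $\Delta x/\Delta y$ so that $\hat a_2\,\Delta x/\Delta y\le\hat a_1$. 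With those two clarifications the argument closes.
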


\begin{proof}
	It is proved by contradiction. 
Suppose there exists a CFL number ${\tt C}=\tau_{\max}(\hat a_1/\Delta x+\hat a_2/\Delta y)>0$, such that 
the PP property \eqref{2D-PP-cellave} always holds under the condition \eqref{pp-condition-2d}. 
Define the constant 
$\delta := \min \left \{ \frac{ {\tt C}}{8}, 1 \right \} \in (0,1].$  
Consider the ideal EOS, the $\mathbb P^0$-based CDG method with $\Delta x=\Delta y$ and piecewise constant data 
\begin{equation}\label{eq:counterEx2D}
	{\bf U}_h^C(x,y) \equiv {\bf U}_0 ~~ \forall (x,y)\in \Omega, \quad 
	{\bf U}_h^D(x,y) = 
	\begin{cases}
		{\bf U}_1,   &(x,y)  \in I_{i-\frac12,j-\frac12} \cup I_{i-\frac12,j+\frac12}, 
		\\[2mm]
		{\bf U}_2,  &(x,y)  \in I_{i+\frac12,j-\frac12} \cup I_{i+\frac12,j+\frac12}, 
		\\[2mm]
		{\bf U}_0, &{\rm otherwise},
	\end{cases}
\end{equation}
where the three constant admissible  states are defined by 
\begin{align*}
	{\bf U}_0 &=\left(1, 1 +  \delta \epsilon,~0,~0,~1+\frac{\epsilon}{2},~0,~0,~
 \frac{(1+   \delta \epsilon )^2}2  
+ \frac{(2+\epsilon)^2}{8} + \frac{\tt p}{\gamma-1}  
\right)^\top,
\\
{\bf U}_1 &= \left(1,~1,~0,~0,~1,~0,~0,~1+\frac{\tt p}{\gamma-1}\right)^\top,
\quad 
{\bf U}_2 = \left(1,~1,~0,~0,~1+ \epsilon,~0,~0,~\frac{1+(1+\epsilon)^2}2 + \frac{\tt p}{\gamma-1} \right)^\top
\end{align*}
with ${\tt p} \in \big(0,\frac{1}{\gamma}\big)$ and $\epsilon \in (0,\delta )$. 
Notice that ${\bf U}_0,{\bf U}_1,{\bf U}_2\in G$, so that 
the solutions \eqref{eq:counterEx2D} automatically satisfy the condition \eqref{pp-condition-2d}. However, they do  not meet the discrete DF condition \eqref{div-condition-2d}, because 
$
{\rm div}_{i,j} {\bf B}_{h}^{D}  = \epsilon/\Delta x \neq 0. 
$
Substituting \eqref{eq:counterEx2D} into ${\bf U}_{\Delta t}^{C} := \overline{ {\bf U} }_{ij}^{C} + \Delta t {\mathbfcal L}_{ij} ( {\bf U}_{h}^{C}, {\bf U}_{h}^{D} )$ gives 
\begin{align*}
	{\bf U}_{\Delta t}^{C} 
& = (1-\theta) {\bf U}_0 + \frac{\theta}2 ( {\bf U}_1 + {\bf U}_2 ) 
+ \frac{\theta {\tt C}}{\hat a_1 + \hat a_2 } ( {\bf F}_1( {\bf U}_1 ) - {\bf F}_1( {\bf U}_2 ) ). 
\end{align*}
According to the PP assumption, we have ${\bf U}_{\Delta t}^{C} \in G$, for any  ${\tt p} \in \big(0,\frac{1}{\gamma}\big)$ and any $\epsilon \in (0,\delta )$.  
For any ${\bf U},\tilde {\bf U} \in \{ {\bf U}_0, {\bf U}_1, {\bf U}_2 \}$, we observe from \eqref{eq:alpha_i} that
\begin{align*}
	\alpha_1 ( {\bf U}, \tilde {\bf U} ) & \le \|v_1\|_{\infty} + \| {\mathcal C}_1 \|_{\infty} + \max_{0\le \ell,s\le 2  } \frac{ | {\bf B}_\ell - {\bf B}_s | }{ \sqrt{\rho_\ell} + \sqrt{\rho_s} } 
=  1 +  \delta \epsilon + (1 + \epsilon) + \frac{ \epsilon }{2} < 5 =: \tilde a_1, 
\\
	\alpha_2 ( {\bf U}, \tilde {\bf U} ) & \le \|v_2\|_{\infty} + \| {\mathcal C}_2 \|_{\infty} + \max_{0\le \ell,s\le 2  } \frac{ | {\bf B}_\ell - {\bf B}_s | }{ \sqrt{\rho_\ell} + \sqrt{\rho_s} } 
	= \sqrt{ \frac12(\gamma-1){\tt p} + (1+\epsilon)^2 } + \frac{ \epsilon }{2} < \sqrt{ \gamma {\tt p} + 4 } + 1 =: \tilde a_2,
\end{align*}
which implies $ w := \frac{\hat a_1 + \hat a_2 }{\tilde a_1 + \tilde a_2} \in (0,1)$. 
Define 
\begin{align*}
	{\bf U}(p,\epsilon) := \, & (1-\theta) {\bf U}_0 + \frac{\theta}2 ( {\bf U}_1 + {\bf U}_2 ) 
+ \frac{\theta {\tt C}}{ \tilde a_1 + \tilde a_2 } ( {\bf F}_1( {\bf U}_1 ) - {\bf F}_1( {\bf U}_2 ) )
\\
= \, & w {\bf U}_{\Delta t}^{C} +  (1-w) \left( (1-\theta) {\bf U}_0 + \frac{\theta}2 ( {\bf U}_1 + {\bf U}_2 )  \right).  
\end{align*}
By the convexity of $G$, we have  
$
{\bf U}(p,\epsilon) \in G, 
$ 
which implies ${\mathcal E} ( {\bf U} ({\tt p},\epsilon) )>0$, for any $ {\tt p} \in \big(0,\frac{1}{\gamma}\big)$ and any $\epsilon \in (0,\delta)$. 
Define $\hat \delta :={\tt C}/8 \ge \delta > \epsilon$ and $\tilde \delta := \hat \delta - \delta \ge 0$. 
Observing  that ${\mathcal E} ( {\bf U})$ is continuous 
  with respect to $\bf U$ on $\mathbb{R}^+\times \mathbb{R}^7$, we obtain 
\begin{equation*}
	0 \le
	\mathop {\lim }\limits_{{\tt p} \to  0^+ } {\mathcal E} ( {\bf U} ({\tt p},\epsilon) ) =  {\mathcal E} \Big( \mathop {\lim }\limits_{{\tt p} \to  0^+ } {\bf U} ({\tt p},\epsilon) \Big)
	= \left( - \frac18 \theta \epsilon \right) \Big[   (8\hat \delta - \epsilon) +  
	\theta \epsilon( 2 \tilde \delta + \hat \delta   \epsilon )^2 + 4 \epsilon \left( \hat \delta + 
	 \delta \tilde \delta 
	 + \delta \hat \delta (1+\epsilon) \right)
	  \Big] < 0,
\end{equation*}
which is a contradiction. Hence the PP assumption is invalid. The proof is completed. 
\end{proof}

\begin{figure}[htbp]
	\centering
	\includegraphics[width=0.999\textwidth]{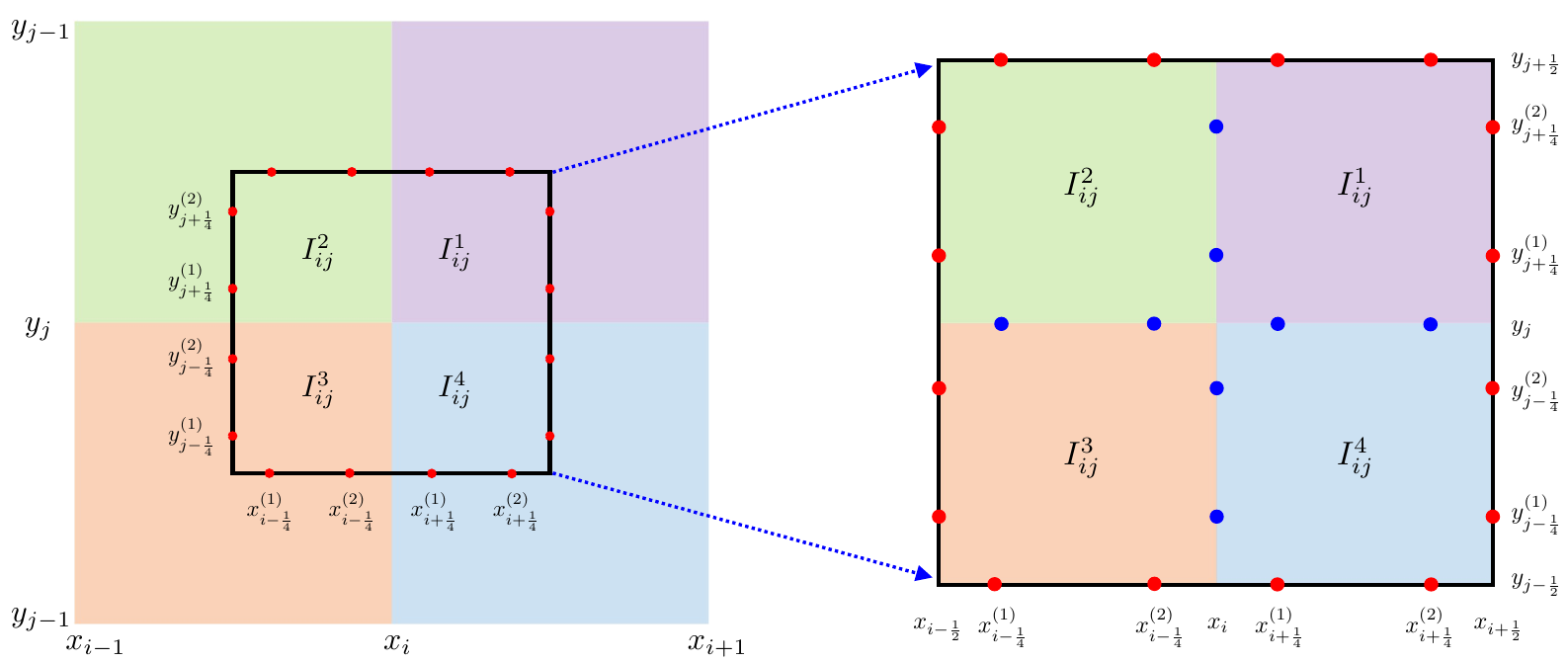}
	\captionsetup{belowskip=-12pt}
	\caption{\small Illustration of the 2D discrete divergence operator \eqref{eq:divijD} on a primal cell (solid lines) with $N=2$ and its relation to the dual mesh (the shadow cells). The red points are involved in \eqref{eq:divijD}, while the blue points are  involved in another discrete divergence operator \eqref{def:newDivB}. These two operators are equivalent when ${\bf B}_h^D$ is locally DF, as shown in the proof of \cref{theorem:new-positivity-2d-high}.} 
	\label{eq:FigDDF}
\end{figure}

\begin{remark}\label{rem:DF}
	The condition \eqref{pp-condition-2d} is a basic standard condition in PP DG type schemes and can be enforced by a local scaling limiter; see \cite{cheng,li2016maximum} and \cite{zhang2010,zhang2010b}. 
	However, unlike many other systems \cite{zhang2010,zhang2010b,li2016maximum}, only condition \eqref{pp-condition-2d} is {\em insufficient} for PP property in the MHD case. 
	\cref{thm:necessity} indicates that the 2D standard CDG method, even with the PP limiter to enforce
	condition \eqref{pp-condition-2d}, is {\em not} PP in general, 
	as it fails to meet the discrete DF condition \eqref{div-condition-2d}. 
	This implies the necessity of the discrete DF condition \eqref{div-condition-2d}, which is, unfortunately, not automatically satisfied by the standard CDG method \eqref{eq:CDG-2d-primal}--\eqref{eq:CDG-2d-dual}. 
	In fact, it is difficult to meet condition \eqref{div-condition-2d}, because it depends on coupling the numerical magnetic fields from the four neighboring cells on the dual mesh; see \cref{eq:FigDDF}. 
	If ${\bf B}_{h}^{D}(x,y)$ and ${\bf B}_{h}^{C}(x,y)$ are globally DF (see \cite{Li2011,Li2012} for a globally DF CDG method), then the condition \eqref{div-condition-2d} is met naturally. 
	 Unfortunately, using the local scaling PP limiter to enforce condition \eqref{pp-condition-2d}
	  will destroy the globally DF property. 
	Due to such incompatibility, it is difficult to meet conditions \eqref{pp-condition-2d} and \eqref{div-condition-2d} simultaneously. 
	We will overcome this obstacle in the next section by constructing new locally DF CDG schemes 
	based on the modified MHD equations \eqref{eq:MHD:GP}. 
\end{remark}

\section{New CDG schemes: provably PP and locally DF}\label{section:newpp2d} 
Our analysis in the last section shows that in order to achieve the provably PP property in the  standard 2D CDG framework, we require the corresponding discrete divergence terms ${\rm div}_{ij} {\bf B}_{h}^{D},{\rm div}_{i+\frac{1}{2},j+\frac{1}{2}} {\bf B}_{h}^{C}$ vanish. 
However, as discussed in \cref{rem:DF}, it is difficult to 
meet the discrete DF condition \eqref{pp-condition-2d} and the basic condition \eqref{div-condition-2d} simultaneously. 
In this section, we further propose and analyze a new locally DF CDG method
based on suitable discretization of the modified MHD equations \eqref{eq:MHD:GP} with the extra source term. 
We discover that if the numerical
magnetic fields ${\bf B}_{h}^{D}$ and ${\bf B}_{h}^{C}$ are locally DF within each cell, then a suitable discretization of the source term in \eqref{eq:MHD:GP} can bring some new  
discrete divergence terms which exactly offset ${\rm div}_{ij} {\bf B}_{h}^{D},{\rm div}_{i+\frac{1}{2},j+\frac{1}{2}} {\bf B}_{h}^{C}$ under the locally DF constraint. 
Moreover, the locally DF property is compatible with condition \eqref{div-condition-2d} and thus is not destroyed by the local scaling PP limiter. 
Notice that all our discussions in \cref{section:pp2d,section:newpp2d} are directly extensible to the 3D case.

In order to introduce our new CDG schemes for the modified MHD system \eqref{eq:MHD:GP}, 
we first define two locally DF spaces \cite{Li2005,Yakovlev2013} associated with the overlapping meshes
\begin{equation}\notag
	\mathbb{W}_{h}^{C,k} = \bigg\{ {\bf w} = (w_1, \dots ,w_8)^{\top} \in \mathbb{V}_{h}^{C,k}:~
	\bigg( \dfrac{\partial w_5}{\partial x} + \dfrac{\partial w_6}{\partial y} \bigg) \bigg|_{I_{ij}} = 0
	~~\forall i,j  \bigg\} \,,  
\end{equation}
\begin{equation}\notag
	\mathbb{W}_{h}^{D,k} =  \bigg\{ {\bf u} = (u_1, \dots ,u_8)^{\top} \in \mathbb{V}_{h}^{D,k}:~
	\bigg( \dfrac{\partial u_5}{\partial x} + \dfrac{\partial u_6}{\partial y} \bigg) \bigg|_{I_{i+\frac{1}{2}, j+\frac{1}{2}}} = 0 ~~ \forall i,j \bigg\} \,.
\end{equation}
Different from \cite{Li2005,Yakovlev2013}, our new locally DF CDG method seeks the numerical solutions ${\bf U}_{h}^{C} \in \mathbb{W}_{h}^{C,k}$ and ${\bf U}_{h}^{D} \in \mathbb{W}_{h}^{D,k}$ for the modified MHD system \eqref{eq:MHD:GP} such that 
\begin{align} \label{eq:newCDG-2d-primal} 
	\int_{ I_{ij} } \frac{\partial {\bf U}_{h}^{C}}{\partial t} \cdot {\bf w} {\rm d}x {\rm d}y
	&= {\mathbfcal G}_{ij} \big( {\bf U}_{h}^{C}, {\bf U}_{h}^{D}, {\bf w} \big) 
	+ {\mathbfcal H}_{ij} \big( {\bf B}_{h}^{D}, {\bf S} ( {\bf U}_{h}^{D} ) \cdot {\bf w} \big)
	\quad \forall {\bf w} \in \mathbb{V}_{h}^{C,k}, 
	\\ \label{eq:newCDG-2d-dual} 
	\int_{ I_{i+\frac{1}{2},j+\frac{1}{2}} } \frac{\partial {\bf U}_{h}^{D}}{\partial t} \cdot {\bf u} {\rm d}x {\rm d}y
	& = {\mathbfcal G}_{i+\frac12,j+\frac12} \big( {\bf U}_{h}^{D}, {\bf U}_{h}^{C}, {\bf u} \big) 
	+ {\mathbfcal H}_{i+\frac12,j+\frac12} \big( {\bf B}_{h}^{C}, {\bf S} ( {\bf U}_{h}^{C} ) \cdot {\bf u} \big)
	\quad \forall  {\bf u} \in \mathbb{V}_{h}^{D,k}, 
\end{align}
where ${\mathbfcal G}_{ij} \big( {\bf U}_{h}^{C}, {\bf U}_{h}^{D}, {\bf w} \big)$ and ${\mathbfcal G}_{i+\frac12,j+\frac12} \big( {\bf U}_{h}^{D}, {\bf U}_{h}^{C}, {\bf u} \big)$ are defined in  \eqref{eq:2DCDG-primal-operator}--\eqref{eq:2DCDG-dual-operator}, and 
${\mathbfcal H}_{ij} ( {\bf B}_{h}^{D}, {\bf S} ( {\bf U}_{h}^{D} ) \cdot {\bf w} )$ 
and ${\mathbfcal H}_{i+\frac12,j+\frac12} ( {\bf B}_{h}^{C}, {\bf S} ( {\bf U}_{h}^{C} ) \cdot {\bf u} )$ are suitable numerical approximations (discussed below) to the source terms
$$
\int_{ I_{ij} } \left( -\nabla \cdot {\bf B}_{h}^{D} \right) {\bf S} ( {\bf U}_{h}^{D} ) \cdot {\bf w} {\rm d}x {\rm d}y \quad \mbox{and} \quad  \int_{ I_{i+\frac12,j+\frac12} } \left( -\nabla \cdot {\bf B}_{h}^{C} \right) {\bf S} ( {\bf U}_{h}^{C} ) \cdot {\bf u} {\rm d}x {\rm d}y,
$$
respectively. Since ${\bf U}_{h}^{D} \in \mathbb{W}_{h}^{D,k}$, the numerical magnetic field  
 ${\bf B}_{h}^{D}$ is locally DF within every dual mesh cell. 
As shown in \cref{eq:FigDDF}, a primal mesh cell $I_{ij}$ consists of 
four quarters of dual mesh cells $I_{ij} = \mathop{\cup}_{1\le \ell \le 4} I_{ij}^{\ell} $, 
while ${\bf B}_{h}^{D}$ is locally DF within each of $\{I_{ij}^{\ell}\}_{\ell = 1}^4$. 
Therefore, to measure $\nabla \cdot {\bf B}_{h}^{D}$ on the primal mesh cell $I_{ij}$, 
we only need to consider the jump of normal magnetic component across the dual mesh interfaces 
$\{ (x_i, y): y_{j-\frac12} \le y \le y_{j+\frac12}  \}$ and $\{ (x, y_j): x_{i-\frac12} \le x \le x_{i+\frac12}  \}$ within the primal mesh cell $I_{ij}$; see \cref{eq:FigDDF}. 
Hereafter we employ the standard notations $\jump{ \cdot }$ and $\dgal{ \cdot }$ to respectively denote the jump and the average of the limiting values at a cell interface, for example,   
\begin{align*}
	&\jump{ B_{1,h}^D ( x_i,y )  }:= B_{1,h}^D ( x_i^+,y ) - B_{1,h}^D ( x_i^-,y ) ,  &\jump{ B_{2,h}^D ( x,y_j )  }:= B_{2,h}^D ( x,y_j^+ ) - B_{2,h}^D ( x,y_j^- ),	
	\\
	&\dgal{ {\bf U}_{h}^{D}( x_i,y ) }:= 
	\frac12\big( {\bf U}_{h}^{D}( x_i^-,y ) + {\bf U}_{h}^{D}( x_i^+,y ) \big),  
	&\dgal{ {\bf U}_{h}^{D}( x,y_j ) }:= 
	\frac12\big( {\bf U}_{h}^{D}( x,y_j^- ) + {\bf U}_{h}^{D}( x,y_j^+ ) \big).
\end{align*}
Then we carefully approximate the source term integral as follows:  
\begin{align} \nonumber
	 &  \int_{ I_{ij} } \left( - \nabla \cdot {\bf B}_{h}^{D} \right) {\bf S} ( {\bf U}_{h}^{D} ) \cdot {\bf w} {\rm d}x {\rm d}y 
	   \approx   
	 \int_{y_{j-\frac12}}^{ y_{j+\frac12} }  \left( - \jump{ B_{1,h}^D ( x_i,y )  } \right)
	{\bf S} \left( \dgal{ {\bf U}_{h}^{D}( x_i,y ) } \right) \cdot {\bf w} ( x_i,y )  {\rm d} y	
	\\
	& \qquad + \int_{x_{i-\frac12}}^{ x_{i+\frac12} }  \left( - \jump{ B_{2,h}^D ( x,y_j )  } \right)
	{\bf S} \left( \dgal{ {\bf U}_{h}^{D}( x,y_j ) } \right) \cdot {\bf w} ( x,y_j )  {\rm d} x
=: {\mathbfcal H}_{ij} ( {\bf B}_{h}^{D}, {\bf S} ( {\bf U}_{h}^{D} ) \cdot {\bf w} ). 
\label{eq:Hprimal}
\end{align}
Such a suitable discretization has carefully taken the PP property into account, as it will become clear in the proof of \cref{theorem:new-positivity-2d-high}. 
Similarly, we design  
\begin{align}\nonumber
{\mathbfcal H}_{i+\frac12,j+\frac12} ( {\bf B}_{h}^{C}, {\bf S} ( {\bf U}_{h}^{C} ) \cdot {\bf u} ) &= 
	 \int_{y_{j}}^{ y_{j+1} }  \left( - \jump{ B_{1,h}^C ( x_{i+\frac12},y )  } \right)
{\bf S} \left( \dgal{ {\bf U}_{h}^{C}( x_{i+\frac12},y ) } \right) \cdot {\bf u} ( x_{i+\frac12},y )  {\rm d} y	
\\ \label{eq:Hdual}
&  + \int_{x_{i}}^{ x_{i+1} }  \left( - \jump{ B_{2,h}^C ( x,y_{j+\frac12} )  } \right)
{\bf S} \left( \dgal{ {\bf U}_{h}^{C}( x,y_{j+\frac12} ) } \right) \cdot {\bf u} ( x,y_{j+\frac12} )  {\rm d} x.
\end{align}
Our new semi-discrete locally DF CDG method is defined by 
the weak formulation \eqref{eq:newCDG-2d-primal}--\eqref{eq:newCDG-2d-dual} 
with the approximate source terms 
\eqref{eq:Hprimal}--\eqref{eq:Hdual}. 
It is worth noting that the locally DF property and 
the above source term discretizations \eqref{eq:Hprimal}--\eqref{eq:Hdual} are essential for achieving PP property (see the proof of \cref{theorem:new-positivity-2d-high} and \cref{rem:GPimportance}), which are  
discovered through careful investigation via the GQL approach.

Next, we will present a rigorous PP analysis for our new locally DF CDG method \eqref{eq:newCDG-2d-primal}--\eqref{eq:newCDG-2d-dual} with \eqref{eq:Hprimal}--\eqref{eq:Hdual}. 
With the $N$-point Gauss quadrature rule approximating all the cell interface integrals, the semi-discrete equations 
for the cell averages in our new CDG method \eqref{eq:newCDG-2d-primal}--\eqref{eq:newCDG-2d-dual} can be written as
\begin{equation}\label{eq:newCDG-2d-high}
	\frac{ {\rm d} \overline{ {\bf U} }_{ij}^{C} }{ {\rm d} t} = {\mathbfcal L}_{ij}^{\rm new} \big( {\bf U}_{h}^{C}, {\bf U}_{h}^{D} \big), \qquad \frac{ {\rm d} \overline{ {\bf U} }_{i+\frac{1}{2},j+\frac{1}{2}}^{D} }{ {\rm d} t} = {\mathbfcal L}_{i+\frac{1}{2},j+\frac{1}{2}}^{\rm new} \big( {\bf U}_{h}^{D}, {\bf U}_{h}^{C} \big),
\end{equation}
where ${\mathbfcal L}_{ij}^{\rm new} ( {\bf U}_{h}^{C}, {\bf U}_{h}^{D} ) 
= {\mathbfcal L}_{ij} ( {\bf U}_{h}^{C}, {\bf U}_{h}^{D} )  + 
{\mathbfcal S}_{ij}^D$ and ${\mathbfcal L}_{i+\frac{1}{2},j+\frac{1}{2}}^{\rm new} ( {\bf U}_{h}^{D}, {\bf U}_{h}^{C} ) = {\mathbfcal L}_{i+\frac{1}{2},j+\frac{1}{2}}  ( {\bf U}_{h}^{D}, {\bf U}_{h}^{C} ) + 
{\mathbfcal S}_{i+\frac{1}{2},j+\frac{1}{2}}^C$, with ${\mathbfcal L}_{ij} ( {\bf U}_{h}^{C}, {\bf U}_{h}^{D} )$ and ${\mathbfcal L}_{i+\frac{1}{2},j+\frac{1}{2}}  ( {\bf U}_{h}^{D}, {\bf U}_{h}^{C} )$ defined in \eqref{eq:CDG-2d-primal-high}--\eqref{eq:CDG-2d-dual-high}, and 
\begin{align*}  
		 {\mathbfcal S}_{ij}^D &=   \sum_{\sigma=\pm 1} \sum_{\mu=1}^N \frac{\omega_{\mu}}{2} \Bigg( - \frac{ \jump{ B_{1,h}^D ( x_i,y_{j+\frac{\sigma}4}^{(\mu)} )  } }{\Delta x}  {\bf S} \left( \dgal{ {\bf U}_{h}^{D}( x_i, y_{j+\frac{\sigma}4}^{(\mu)} ) } \right) \Bigg)
		\\
		& \quad + \sum_{\sigma=\pm 1} \sum_{\mu=1}^N \frac{\omega_{\mu}}{2} 
		 \Bigg( - \frac{ \jump{ B_{2,h}^D ( x_{i+\frac{\sigma}4}^{(\mu)}, y_j )  }   }{\Delta y} 
		{\bf S} \left( \dgal{ {\bf U}_{h}^{D}( x_{i+\frac{\sigma}4}^{(\mu)}, y_j ) } \right) \Bigg),
		\\
		{\mathbfcal S}_{i+\frac{1}{2},j+\frac{1}{2}}^C &=   \sum_{\sigma=\pm 1} \sum_{\mu=1}^N \frac{\omega_{\mu}}{2}  \Bigg( - \frac{ \jump{ B_{1,h}^C ( x_{i+\frac12},y_{j+\frac12+\frac{\sigma}4}^{(\mu)} )  } }{\Delta x}  {\bf S} \left( \dgal{ {\bf U}_{h}^{C}( x_{i+\frac12}, y_{j+\frac12+\frac{\sigma}4}^{(\mu)} ) } \right) \Bigg)
		\\
		& \quad + \sum_{\sigma=\pm 1} \sum_{\mu=1}^N \frac{\omega_{\mu}}{2} \Bigg( - \frac{ \jump{ B_{2,h}^C ( x_{i+\frac12+\frac{\sigma}4}^{(\mu)}, y_{j+\frac12} )  }   }{\Delta y} 
		{\bf S} \left( \dgal{ {\bf U}_{h}^{C}( x_{i+\frac12+\frac{\sigma}4}^{(\mu)}, y_{j+\frac12} ) } \right) \Bigg).
\end{align*}

\begin{theorem}[PP property of new locally DF CDG method]\label{theorem:new-positivity-2d-high}	
	Assume $\overline{ {\bf U} }_{ij}^{C},\overline{ {\bf U} }_{i+\frac{1}{2},j+\frac{1}{2}}^{D}\in G$ and that the numerical solutions ${\bf U}_{h}^{C}(x,y), {\bf U}_{h}^{D}(x,y)$ satisfy the condition  \eqref{pp-condition-2d}. 
	Then our new locally DF CDG method \eqref{eq:newCDG-2d-primal}--\eqref{eq:newCDG-2d-dual} with \eqref{eq:Hprimal}--\eqref{eq:Hdual} is PP, namely, for all $i$ and $j$ the updated cell averages satisfy 
\begin{equation}\label{new-2D-PP-cellave}
	\overline{ {\bf U} }_{ij}^{C} + \Delta t {\mathbfcal L}_{ij}^{\rm new} \big( {\bf U}_{h}^{C}, {\bf U}_{h}^{D} \big) \in G,  \quad \overline{ {\bf U} }_{i+\frac{1}{2},j+\frac{1}{2}}^{D} + \Delta t {\mathbfcal L}_{i+\frac{1}{2},j+\frac{1}{2}}^{\rm new} \big( {\bf U}_{h}^{D}, {\bf U}_{h}^{C} \big) \in G  \quad \forall i,j,
\end{equation} 
	under the CFL condition	
	\begin{equation}\label{eq:new-2D-CFL}
		 \frac{  a_1 \Delta t}{\Delta x} +  \frac{  a_2 \Delta t}{\Delta y} 
		< \frac{\theta \hat{\omega}_{1}}{2} \,, \qquad
		\theta = \frac{\Delta t}{\tau_{\max}} \in (0,1] \,,
	\end{equation}
\end{theorem}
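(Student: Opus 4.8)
The plan is to deduce \eqref{new-2D-PP-cellave} from \cref{theorem:positivity-2d-high} by treating the new cell‑averaged update as the ``standard'' update of \cref{section:pp2d} plus the contribution of the discretized source term. Write $\mathbf{U}_{\Delta t}^{C} := \overline{\mathbf{U}}_{ij}^{C} + \Delta t\,{\mathbfcal L}_{ij}(\mathbf{U}_h^C,\mathbf{U}_h^D)$ for the standard update and $\mathbf{U}_{\Delta t}^{C,{\rm new}} := \mathbf{U}_{\Delta t}^{C} + \Delta t\,{\mathbfcal S}_{ij}^{D}$, and likewise $\mathbf{U}_{\Delta t}^{D,{\rm new}} := \mathbf{U}_{\Delta t}^{D} + \Delta t\,{\mathbfcal S}_{i+\frac12,j+\frac12}^{C}$ on the dual mesh. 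Since the density component of $\mathbf{S}(\mathbf{U})$ is identically zero, ${\mathbfcal S}_{ij}^{D}\cdot\mathbf{n}_1 = 0$ and ${\mathbfcal S}_{i+\frac12,j+\frac12}^{C}\cdot\mathbf{n}_1 = 0$, so the density positivity $\mathbf{U}_{\Delta t}^{C,{\rm new}}\cdot\mathbf{n}_1>0$, $\mathbf{U}_{\Delta t}^{D,{\rm new}}\cdot\mathbf{n}_1>0$ is inherited verbatim from \eqref{eq:2Ddensity}. By \cref{theo:eqDefG} it then remains to prove $\mathbf{U}_{\Delta t}^{C,{\rm new}}\cdot\mathbf{n}^{\ast}+\frac12|\mathbf{B}^{\ast}|^2>0$ and $\mathbf{U}_{\Delta t}^{D,{\rm new}}\cdot\mathbf{n}^{\ast}+\frac12|\mathbf{B}^{\ast}|^2>0$ for all free auxiliary variables $\mathbf{v}^\ast,\mathbf{B}^\ast$.

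The key algebraic observation, checked directly from \eqref{eq:DefN} and the definition of $\mathbf{S}$, is that $\mathbf{S}(\mathbf{W})\cdot\mathbf{n}^{\ast} = (\mathbf{v}_{\mathbf{W}}-\mathbf{v}^{\ast})\cdot(\mathbf{B}_{\mathbf{W}}-\mathbf{B}^{\ast}) - \mathbf{v}^{\ast}\cdot\mathbf{B}^{\ast}$. Substituting the $\dgal{ \cdot }$‑averaged traces into this identity and summing against the jump weights in ${\mathbfcal S}_{ij}^{D}$, the $\mathbf{W}$‑independent piece $-\mathbf{v}^{\ast}\cdot\mathbf{B}^{\ast}$ factors out and reassembles into $(\mathbf{v}^{\ast}\cdot\mathbf{B}^{\ast})$ times a modified discrete divergence of $\mathbf{B}_h^D$ built from the interior dual‑interface jumps $\jump{B_{1,h}^D(x_i,\cdot)}$ and $\jump{B_{2,h}^D(\cdot,y_j)}$ (the blue points in \cref{eq:FigDDF}). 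Because $\mathbf{U}_h^D\in\mathbb{W}_h^{D,k}$, the field $\mathbf{B}_h^D$ is divergence‑free on each of the four dual‑mesh quarters $I_{ij}^{\ell}$ of the primal cell $I_{ij}$; applying Green's theorem quarter by quarter, and using that the $N$‑point Gauss rule is exact on the degree‑$k$ edge traces, this modified divergence equals the primal‑cell divergence ${\rm div}_{ij}\mathbf{B}_h^D$ of \eqref{eq:divijD}. Hence $\Delta t\,{\mathbfcal S}_{ij}^{D}\cdot\mathbf{n}^{\ast} = \Delta t\,(\mathbf{v}^{\ast}\cdot\mathbf{B}^{\ast})\,{\rm div}_{ij}\mathbf{B}_h^D + \Delta t\,R_{ij}$, where $R_{ij}$ gathers the remaining $(\mathbf{v}^{\rm avg}-\mathbf{v}^{\ast})\cdot(\mathbf{B}^{\rm avg}-\mathbf{B}^{\ast})$ contributions from the interior interfaces. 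Adding this to the estimate \eqref{eq:positivity-2d-high-primal} of \cref{theorem:positivity-2d-high}, the divergence term $-\Delta t\,(\mathbf{v}^{\ast}\cdot\mathbf{B}^{\ast})\,{\rm div}_{ij}\mathbf{B}_h^D$ cancels exactly, leaving
\[ \mathbf{U}_{\Delta t}^{C,{\rm new}}\cdot\mathbf{n}^{\ast}+\tfrac12|\mathbf{B}^{\ast}|^2 \;>\; \tfrac{\theta\hat{\omega}_1}{2}\Big(\big(\mathbf{\Pi}_{ij}^{L,-}+\mathbf{\Pi}_{ij}^{1,+}\big)\cdot\mathbf{n}^{\ast}+|\mathbf{B}^{\ast}|^2\Big) + \Delta t\,R_{ij}. \]

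The last step is to show the right‑hand side is positive. The crucial structural point is that the one‑sided traces $\mathbf{U}_h^D(x_i^{\pm},\cdot)$ and $\mathbf{U}_h^D(\cdot,y_j^{\pm})$ occurring in the jumps inside $R_{ij}$ are exactly the states that make up $\mathbf{\Pi}_{ij}^{L,-}$ and $\mathbf{\Pi}_{ij}^{1,+}$; pairing them interface by interface and invoking the CFL bound \eqref{eq:new-2D-CFL} (which renders each jump contribution a strict fraction of the positive budget), positivity of each pair follows from a counterpart of \cref{theo:MHD:LLFsplit}: for any $\mathbf{U},\tilde{\mathbf{U}}\in G$ and $\alpha>\alpha_1(\mathbf{U},\tilde{\mathbf{U}})$,
\[ (\mathbf{U}+\tilde{\mathbf{U}})\cdot\mathbf{n}^{\ast}+|\mathbf{B}^{\ast}|^2 + \tfrac{B_1-\tilde B_1}{\alpha}\Big(\big(\tfrac{\mathbf{m}+\tilde{\mathbf{m}}}{\rho+\tilde\rho}-\mathbf{v}^{\ast}\big)\cdot\big(\tfrac{\mathbf{B}+\tilde{\mathbf{B}}}{2}-\mathbf{B}^{\ast}\big)\Big) > 0, \]
which is proved by the same completing‑the‑square/AM--GM argument as \eqref{eq:MHD:LLFsplit}, exploiting $\mathbf{U}\cdot\mathbf{n}^{\ast}+\frac12|\mathbf{B}^{\ast}|^2 = \mathcal{E}(\mathbf{U})+\frac{\rho}{2}|\mathbf{v}-\mathbf{v}^{\ast}|^2+\frac12|\mathbf{B}-\mathbf{B}^{\ast}|^2$. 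Combined with the density positivity this gives $\mathbf{U}_{\Delta t}^{C,{\rm new}}\in G_{\ast}=G$; the dual‑mesh case is handled identically, using \eqref{eq:positivity-2d-high-dual} together with the local divergence‑free property of $\mathbf{B}_h^C$ on each primal‑cell quarter of the dual cell. I expect the two main obstacles to be: (i) the Green's‑theorem identification of the two discrete divergence operators under the local DF constraint, which is the conceptual reason the modified‑MHD source discretization \eqref{eq:Hprimal}--\eqref{eq:Hdual} is precisely the ``right'' correction; and (ii) establishing the \cref{theo:MHD:LLFsplit}‑type inequality above and checking that the residual $R_{ij}$ is absorbed within the CFL budget \eqref{eq:new-2D-CFL}.
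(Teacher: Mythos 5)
Your route is essentially the paper's own: split the update into the standard CDG part plus $\Delta t\,{\mathbfcal S}_{ij}^{D}$, inherit the density bound since ${\bf S}\cdot{\bf n}_1=0$, lower-bound the source contribution using the identity ${\bf S}({\bf W})\cdot{\bf n}^{\ast}=({\bf v}_{\bf W}-{\bf v}^{\ast})\cdot({\bf B}_{\bf W}-{\bf B}^{\ast})-{\bf v}^{\ast}\cdot{\bf B}^{\ast}$ (this is exactly the content of the paper's inequality \eqref{key000}, quoted from \cite{WuShu2019}), identify the jump-based divergence \eqref{def:newDivB} with ${\rm div}_{ij}{\bf B}_h^D$ of \eqref{eq:divijD} via Gauss exactness, the divergence theorem on the four quarters, and the local DF property, and then cancel against the divergence term in \eqref{eq:positivity-2d-high-primal}. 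The only real difference is bookkeeping at the end: you absorb the residual $R_{ij}$ interface-by-interface through a modified \cref{theo:MHD:LLFsplit}-type inequality requiring an effective $\alpha>\alpha_1$ of the \emph{interior trace pair} at $x_i$ (resp.\ $y_j$), whereas the paper absorbs it globally (the quantity $\Phi\ge 0$) using the AM--GM bound with weight $|\jump{B_{\ell,h}}|/(2\sqrt{\dgal{\rho_h}})$. Be aware that the cross-cell quantities $\hat a_\ell$ of \eqref{eq:defa1}--\eqref{eq:defa2} do \emph{not} control $\alpha_1$ of those interior trace pairs, so the CFL budget \eqref{eq:new-2D-CFL} closes your pairing only because the theorem's wave speeds are defined as $a_\ell=\max\{\hat a_\ell,\beta_\ell\}$ with $\beta_\ell$ built precisely from the interface jumps $|\jump{B_{\ell,h}}|/(2\sqrt{\dgal{\rho_h}})$; your claimed counterpart inequality is nevertheless valid for $\alpha>\alpha_{\ell}({\bf U},\tilde{\bf U})$ because $|\jump{B_\ell}|/(2\sqrt{\dgal{\rho}})\le|{\bf B}-\tilde{\bf B}|/(\sqrt{\rho}+\sqrt{\tilde\rho})\le\alpha_\ell$ (this is the substance of \cref{WU:prop}). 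With that one point made explicit, your argument matches the paper's proof.
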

where $ a_\ell=\max\{ \hat a_\ell, \beta_\ell\}$, $\ell=1,2$, with $\{\hat a_\ell\}$ defined in \eqref{eq:defa1}--\eqref{eq:defa2} and 
\begin{align*}
&	\beta_1 :=  \max_{i,j,\mu,\sigma} 
\left\{  \frac{ \left|  { \jump{ B_{1,h}^D ( x_i,y_{j+\frac{\sigma}4}^{(\mu)} )  } }  \right| }{2\sqrt{ \dgal{ \rho_{h}^{D}( x_i, y_{j+\frac{\sigma}4}^{(\mu)} ) } }},~  \frac{ \left|  { \jump{ B_{1,h}^C ( x_{i+\frac12},y_{j+\frac12+\frac{\sigma}4}^{(\mu)} )  } }  \right| }{2\sqrt{ \dgal{ \rho_{h}^{C}( x_{i+\frac12}, y_{j+\frac12+\frac{\sigma}4}^{(\mu)} ) } }} 
\right\},
\\
&	\beta_2 :=  \max_{i,j,\mu,\sigma} 
\left\{  
\frac{ \left|  { \jump{ B_{2,h}^D  ( x_{i+\frac{\sigma}4}^{(\mu)}, y_j )  } }  \right| }{2\sqrt{ \dgal{ \rho_{h}^{D}  ( x_{i+\frac{\sigma}4}^{(\mu)}, y_j )  } }},~ 
\frac{ \left|  { \jump{ B_{2,h}^C  ( x_{i+\frac12+\frac{\sigma}4}^{(\mu)}, y_{j+\frac12} )  } }  \right| }{2\sqrt{ \dgal{ \rho_{h}^{C}  ( x_{i+\frac12+\frac{\sigma}4}^{(\mu)}, y_{j+\frac12} )  } }}
\right\}.
\end{align*}

\begin{proof}
	Define ${\bf U}_{\Delta t}^{C,{\rm new}}:=\overline{ {\bf U} }_{ij}^{C} + \Delta t {\mathbfcal L}_{ij}^{\rm new} ( {\bf U}_{h}^{C}, {\bf U}_{h}^{D} ) = {\bf U}_{\Delta t}^{C} + \Delta t {\mathbfcal S}_{ij}^D$, 
	where ${\bf U}_{\Delta t}^{C}=\overline{ {\bf U} }_{ij}^{C} + \Delta t {\mathbfcal L}_{ij} ( {\bf U}_{h}^{C}, {\bf U}_{h}^{D} )$ is the updated cell average of the 2D standard CDG method defined in 
	\cref{theorem:positivity-2d-high}. 
	Because the first component of ${\bf S}({\bf U})$ in \eqref{eq:MHD:GP} is zero, we have 
	${\mathbfcal S}_{ij}^D \cdot {\bf n}_1=0$. From \eqref{eq:2Ddensity} in \cref{theorem:positivity-2d-high}, one obtains    
	${\bf U}_{\Delta t}^{C,{\rm new}} \cdot {\bf n}_1= {\bf U}_{\Delta t}^{C} \cdot {\bf n}_1 + \Delta t {\mathbfcal S}_{ij}^D\cdot {\bf n}_1 =  {\bf U}_{\Delta t}^{C} \cdot {\bf n}_1 >0.$
	
	Next, we will prove ${\bf U}_{\Delta t}^{C,{\rm new}} \cdot {\bf n}^\ast + \frac{ |{\bf B}^*|^2 }2>0$ for auxiliary variables ${\bf v}^{\ast},{\bf B}^{\ast} \in \mathbb R^3$. 
	Notice that 
	\begin{equation}\label{keyg23}
		{\bf U}_{\Delta t}^{C,{\rm new}} \cdot {\bf n}^\ast + \frac{ |{\bf B}^*|^2 }2 
		= \left( {\bf U}_{\Delta t}^{C} \cdot {\bf n}^\ast + \frac{ |{\bf B}^*|^2 }2 \right) 
		+ \Delta t {\mathbfcal S}_{ij}^D \cdot {\bf n}^\ast,
	\end{equation}
    and  
    a tight lower bound of ${\bf U}_{\Delta t}^{C} \cdot {\bf n}^\ast + \frac{ |{\bf B}^*|^2 }2$ has been derived 
    in \eqref{eq:positivity-2d-high-primal} of \cref{theorem:positivity-2d-high}, i.e.,  
\begin{align} \label{eq:estimateU}
	    {\bf U}_{\Delta t}^{C} \cdot {\bf n}^{\ast} + \frac{|{\bf B}^{\ast}|^2}{2}  > 
    {\theta\hat{\omega}_{1}} \bigg(  \frac{ \mathbf{\Pi}_{ij}^{L,-} + \mathbf{\Pi}_{ij}^{1,+} }{2} \cdot {\bf n}^{\ast} + \frac{|{\bf B}^{\ast}|^2}2 \bigg)
    - \Delta t ({\bf v}^{\ast}\cdot{\bf B}^{\ast}) ( {\rm div}_{ij} {\bf B}_{h}^{D} )
\end{align}
    with 
\begin{equation}\label{avePi}
	    \frac{ \mathbf{\Pi}_{ij}^{L,-} + \mathbf{\Pi}_{ij}^{1,+} }{2} = 
    \frac{\lambda_{1}}{\lambda} \sum_{\sigma=\pm 1} \sum_{\mu=1}^N \frac{\omega_{\mu}}{2} 
    \dgal{ {\bf U}_{h}^{D}( {x}_i, {y}_{j+\frac{\sigma}4}^{(\mu)} ) }
    + \frac{\lambda_{2}}{\lambda}  \sum_{\sigma=\pm 1} \sum_{\mu=1}^N  \frac{\omega_{\mu}}{2} 
    \dgal{ {\bf U}_{h}^{D} ( {x}_{i+\frac{\sigma}4}^{(\mu)}, {y}_j ) }.
\end{equation}
    In the following, we will derive a suitable lower bound for $\Delta t {\mathbfcal S}_{ij}^D \cdot {\bf n}^\ast$, which exactly offsets the discrete divergence terms in \eqref{eq:estimateU}. 
	Thanks to \cite[Lemma 7]{WuShu2019}, for any ${\bf U} \in G$ and any $\xi \in \mathbb R$,  it holds that 
	\begin{equation}\label{key000}
		-\xi {\bf S}({\bf U}) \cdot {\bf n}^\ast \ge \xi ( {\bf v}^\ast \cdot {\bf B}^\ast ) - \frac{ |\xi| }{\sqrt{\rho}} \left( {\bf U} \cdot {\bf n}^\ast + \frac{ |{\bf B}^*|^2 }2 \right).
	\end{equation}
	The condition \eqref{pp-condition-2d} ensures   
	${\bf U}_{h}^{D}( x_i^\pm, y_{j+\frac{\sigma}4}^{(\mu)} )  \in G$, 
	which implies the average  
$\dgal{ {\bf U}_{h}^{D}( x_i, y_{j+\frac{\sigma}4}^{(\mu)} ) } \in G$ 
according to the convexity of $G$. 
Applying inequality \eqref{key000} to $\dgal{ {\bf U}_{h}^{D}( x_i, y_{j+\frac{\sigma}4}^{(\mu)} ) }$ and ${ \jump{ B_{1,h}^D ( x_i,y_{j+\frac{\sigma}4}^{(\mu)} )  } } $ gives 
\begin{align*}\nonumber
	&- { \jump{ B_{1,h}^D ( x_i,y_{j+\frac{\sigma}4}^{(\mu)} )  } }  {\bf S} \left( \dgal{ {\bf U}_{h}^{D}( x_i, y_{j+\frac{\sigma}4}^{(\mu)} ) } \right) \cdot {\bf n}^\ast 
	\\
	& \qquad 
	\ge 
	{ \jump{ B_{1,h}^D ( x_i,y_{j+\frac{\sigma}4}^{(\mu)} )  } } 
	( {\bf v}^\ast \cdot {\bf B}^\ast ) 
	- \frac{ \left|  { \jump{ B_{1,h}^D ( x_i,y_{j+\frac{\sigma}4}^{(\mu)} )  } }  \right| }{\sqrt{ \dgal{ \rho_{h}^{D}( x_i, y_{j+\frac{\sigma}4}^{(\mu)} ) } }} \left(  
	\dgal{ {\bf U}_{h}^{D}( x_i, y_{j+\frac{\sigma}4}^{(\mu)} ) } \cdot {\bf n}^\ast + \frac{ |{\bf B}^*|^2 }2 \right)
	\\ 
	 & \qquad \ge  { \jump{ B_{1,h}^D ( x_i,y_{j+\frac{\sigma}4}^{(\mu)} )  } } 
	( {\bf v}^\ast \cdot {\bf B}^\ast ) - 2 \beta_1 \left(  
	\dgal{ {\bf U}_{h}^{D}( x_i, y_{j+\frac{\sigma}4}^{(\mu)} ) } \cdot {\bf n}^\ast + \frac{ |{\bf B}^*|^2 }2 \right). 
\end{align*}
Similarly, one has 
\begin{align*} \nonumber
	&- { \jump{ B_{2,h}^D  ( x_{i+\frac{\sigma}4}^{(\mu)}, y_j )  } }  {\bf S} \left( \dgal{ {\bf U}_{h}^{D}  ( x_{i+\frac{\sigma}4}^{(\mu)}, y_j ) } \right) \cdot {\bf n}^\ast & 
	\\ 
	& \qquad \ge 
	{ \jump{ B_{2,h}^D   ( x_{i+\frac{\sigma}4}^{(\mu)}, y_j )  } } 
	( {\bf v}^\ast \cdot {\bf B}^\ast ) 
	- 2\beta_2 \left(  
	\dgal{ {\bf U}_{h}^{D}  ( x_{i+\frac{\sigma}4}^{(\mu)}, y_j ) } \cdot {\bf n}^\ast + \frac{ |{\bf B}^*|^2 }2 \right).
\end{align*}
Therefore, 
\begin{align}\nonumber
			 \Delta t {\mathbfcal S}_{ij}^D \cdot {\bf n}^\ast & =  \frac{\Delta t}{\Delta x} \sum_{\sigma=\pm 1} \sum_{\mu=1}^N \frac{\omega_{\mu}}{2} \bigg( - { \jump{ B_{1,h}^D ( x_i,y_{j+\frac{\sigma}4}^{(\mu)} )  } }  {\bf S} \left( \dgal{ {\bf U}_{h}^{D}( x_i, y_{j+\frac{\sigma}4}^{(\mu)} ) } \right) \cdot {\bf n}^\ast  \bigg)
	\\ \nonumber
	& \quad + \frac{\Delta t}{\Delta y} \sum_{\sigma=\pm 1} \sum_{\mu=1}^N \frac{\omega_{\mu}}{2} 
	\bigg( - { \jump{ B_{2,h}^D ( x_{i+\frac{\sigma}4}^{(\mu)}, y_j )  }   } 
	{\bf S} \left( \dgal{ {\bf U}_{h}^{D}( x_{i+\frac{\sigma}4}^{(\mu)}, y_j ) } \right) \cdot {\bf n}^\ast  \bigg)
	\\ \nonumber
	& \ge \frac{\Delta t}{\Delta x} \sum_{\sigma=\pm 1} \sum_{\mu=1}^N \frac{\omega_{\mu}}{2} 
	\bigg[ { \jump{ B_{1,h}^D ( x_i,y_{j+\frac{\sigma}4}^{(\mu)} )  } } 
	( {\bf v}^\ast \cdot {\bf B}^\ast ) - 2\beta_1 \bigg(  
	\dgal{ {\bf U}_{h}^{D}( x_i, y_{j+\frac{\sigma}4}^{(\mu)} ) } \cdot {\bf n}^\ast + \frac{ |{\bf B}^*|^2 }2 \bigg)
	\bigg]
	\\ \nonumber
	& \quad + \frac{\Delta t}{\Delta y} \sum_{\sigma=\pm 1} \sum_{\mu=1}^N \frac{\omega_{\mu}}{2}  
	\bigg[ { \jump{ B_{2,h}^D   ( x_{i+\frac{\sigma}4}^{(\mu)}, y_j )  } } 
	( {\bf v}^\ast \cdot {\bf B}^\ast ) 
	- 2\beta_2 \bigg(  
	\dgal{ {\bf U}_{h}^{D}  ( x_{i+\frac{\sigma}4}^{(\mu)}, y_j ) } \cdot {\bf n}^\ast + \frac{ |{\bf B}^*|^2 }2 \bigg) \bigg]
	\\ \nonumber
	& = \Delta t ({\bf v}^{\ast}\cdot{\bf B}^{\ast}) \left( \widetilde {\rm div}_{ij} {\bf B}_{h}^{D}  \right)
	- \frac{2\beta_1 \Delta t}{\Delta x} \sum_{\sigma=\pm 1} \sum_{\mu=1}^N \frac{\omega_{\mu}}{2}  
	\left(  
	\dgal{ {\bf U}_{h}^{D}( x_i, y_{j+\frac{\sigma}4}^{(\mu)} ) } \cdot {\bf n}^\ast + \frac{ |{\bf B}^*|^2 }2 \right)
	\\ \label{eq:estS}
	& \quad - \frac{2\beta_2 \Delta t}{\Delta y} \sum_{\sigma=\pm 1} \sum_{\mu=1}^N \frac{\omega_{\mu}}{2} 
	\left(  
	\dgal{ {\bf U}_{h}^{D}  ( x_{i+\frac{\sigma}4}^{(\mu)}, y_j ) } \cdot {\bf n}^\ast + \frac{ |{\bf B}^*|^2 }2 \right)
\end{align}
with 
\begin{equation}\label{def:newDivB}
	\widetilde {\rm div}_{ij} {\bf B}_{h}^{D} :=  \sum_{\sigma=\pm 1} \sum_{\mu=1}^N  \frac{\omega_{\mu}}{2} 
	\left( \frac{ \jump{ B_{1,h}^D ( x_i,y_{j+\frac{\sigma}4}^{(\mu)} )  } } { \Delta x }
	 +   
	  \frac{  \jump{ B_{2,h}^D   ( x_{i+\frac{\sigma}4}^{(\mu)}, y_j )  }  }{\Delta y} \right).  
\end{equation}
Substituting the estimates \eqref{eq:estS} and \eqref{eq:estimateU} with \eqref{avePi} into \eqref{keyg23}, 
we obtain  
\begin{equation}\label{key633}
	{\bf U}_{\Delta t}^{C,{\rm new}} \cdot {\bf n}^\ast + \frac{ |{\bf B}^*|^2 }2  >  \Phi 
	+ \Delta t ({\bf v}^{\ast}\cdot{\bf B}^{\ast}) \left( \widetilde {\rm div}_{ij} {\bf B}_{h}^{D}  - {\rm div}_{ij} {\bf B}_{h}^{D} \right)
\end{equation}
with 
\begin{align*}
	 \Phi  & := \left( \theta \hat \omega_1  \frac{\lambda_1}{\lambda} -\frac{2\beta_1 \Delta t}{\Delta x}  \right) 
\sum_{\sigma=\pm 1} \sum_{\mu=1}^N \frac{\omega_{\mu}}{2}  
\left(  
\dgal{ {\bf U}_{h}^{D}( x_i, y_{j+\frac{\sigma}4}^{(\mu)} ) } \cdot {\bf n}^\ast + \frac{ |{\bf B}^*|^2 }2 \right)
\\
& + \left( \theta \hat \omega_1  \frac{\lambda_2}{\lambda} -\frac{2\beta_2 \Delta t}{\Delta y}  \right) 
\sum_{\sigma=\pm 1} \sum_{\mu=1}^N \frac{\omega_{\mu}}{2}  
\left(  
\dgal{ {\bf U}_{h}^{D}  ( x_{i+\frac{\sigma}4}^{(\mu)}, y_j ) } \cdot {\bf n}^\ast + \frac{ |{\bf B}^*|^2 }2 \right).
\end{align*}
Under the CFL condition \eqref{eq:new-2D-CFL}, we have $\theta \hat \omega_1  \frac{\lambda_1}{\lambda} \ge 2 \lambda_1 =  \frac{ 2a_1 \Delta t}{\Delta x} \ge  \frac{2\beta_1 \Delta t}{\Delta x}  $, and similarly, $\theta \hat \omega_1  \frac{\lambda_2}{\lambda} \ge \frac{2\beta_2 \Delta t}{\Delta y}$. 
Hence $\Phi\ge 0$, and then the estimate \eqref{key633} yields 
\begin{equation}\label{key634}
	{\bf U}_{\Delta t}^{C,{\rm new}} \cdot {\bf n}^\ast + \frac{ |{\bf B}^*|^2 }2  >  
	\Delta t ({\bf v}^{\ast}\cdot{\bf B}^{\ast}) \left( \widetilde {\rm div}_{ij} {\bf B}_{h}^{D}  - {\rm div}_{ij} {\bf B}_{h}^{D} \right).
\end{equation}
Combining \eqref{def:newDivB} with \eqref{eq:divijD} gives  
\begin{align*}
			{\rm div}_{ij} {\bf B}_{h}^{D} - \widetilde {\rm div}_{ij} {\bf B}_{h}^{D}
	&=  \frac{1 }{\Delta x}  \sum_{\sigma=\pm 1} \sum_{\mu=1}^N  \frac{\omega_{\mu} }{2}
	\Big( B_{1,h}^{D} {\color{red} (  x_{i+\frac{1}{2}}, {y}_{j+\frac{\sigma}4}^{(\mu)}  )}  - B_{1,h}^{D} {\color{red}(  x_{i-\frac{1}{2}},  {y}_{j+\frac{\sigma}4}^{(\mu)}  )} \Big)  \\
	&+  \frac{ 1 }{\Delta y}  \sum_{\sigma=\pm 1} \sum_{\mu=1}^N   \frac{\omega_{\mu} }{2}
	\Big( B_{2,h}^{D} {\color{red} (  {x}_{i+\frac{\sigma}4}^{(\mu)} ,y_{j+\frac{1}{2}}  )}  - B_{2,h}^{D} {\color{red}(   {x}_{i+\frac{\sigma}4}^{(\mu)} , y_{j-\frac{1}{2}}  )} \Big) 
	\\
	& - \sum_{\sigma=\pm 1} \sum_{\mu=1}^N  \frac{\omega_{\mu}}{2} 
	\Bigg( \frac{ \jump{ B_{1,h}^D {\color{blue} (  x_i,y_{j+\frac{\sigma}4}^{(\mu)} )}  } } { \Delta x }
	+   
	\frac{  \jump{ B_{2,h}^D  {\color{blue} (  x_{i+\frac{\sigma}4}^{(\mu)}, y_j )}  }  }{\Delta y} \Bigg),
\end{align*}
where for clarification we have colored the points which correspond to the red and blue points illustrated in \cref{eq:FigDDF} for $N=2$.  
{\bf A key observation is that thanks to the locally DF property of ${\bf B}_{h}^{D}$, the two discrete divergence operators 
	$\widetilde {\rm div}_{ij}$ and ${\rm div}_{ij}$ are exactly equivalent for ${\bf B}_{h}^{D}$.} 
In fact, using 
the exactness of $N$-point Gauss quadrature ($N=k+1$) for polynomials of degree $k$, we have 
\begin{align*}
	{\rm div}_{ij} {\bf B}_{h}^{D} - \widetilde {\rm div}_{ij} {\bf B}_{h}^{D}
	=   \frac{1}{\Delta x \Delta y} \bigg( &
	\int_{y_{j-\frac12}}^{ y_{j+\frac12} } \left( B_{1,h}^{D} {\color{red} (  x_{i+\frac{1}{2}}, y  )}  - B_{1,h}^{D} {\color{red}(  x_{i-\frac{1}{2}},  y  )} \right) {\rm d}y 
	\\
	 + & \int_{x_{i-\frac12}}^{ x_{i+\frac12} }  \left( B_{2,h}^{D} {\color{red} (  x ,y_{j+\frac{1}{2}}  )}  - B_{2,h}^{D} {\color{red}(  x , y_{j-\frac{1}{2}}  )} \right)  {\rm d} x
	\\
	 - & \int_{y_{j-\frac12}}^{ y_{j+\frac12} }  \jump{ B_{1,h}^D {\color{blue} (  x_i,y )}  }  {\rm d}y 
	- \int_{x_{i-\frac12}}^{ x_{i+\frac12} }  \jump{ B_{2,h}^D  {\color{blue} (  x, y_j )}  }   {\rm d} x
	\bigg)
	\\
	= \frac{1}{\Delta x \Delta y} \bigg( & \sum_{\ell = 1}^4 \int_{ \partial I_{ij}^\ell }  {\bf B}_h^D \cdot   {\bm n}_{\partial I_{ij}^\ell} {\rm d} s  \bigg) = \frac{1}{\Delta x \Delta y} \sum_{\ell = 1}^4 \iint_{  I_{ij}^\ell } 
	\nabla \cdot {\bf B}_h^D {\rm d}x{\rm d}y, 
\end{align*}
where we have utilized the {\em divergence theorem} within the four subcells $I_{ij}^{\ell}$ shown \cref{eq:FigDDF}, namely, $I_{ij}^{1}=[x_{i},x_{i+\frac12}] \times [y_{j},y_{j+\frac12}]$, 
$I_{ij}^{2}=[x_{i-\frac12},x_{i}] \times [y_{j},y_{j+\frac12}]$, $I_{ij}^{3}=[x_{i-\frac12},x_{i}] \times [y_{j-\frac12},y_{j}]$ and $I_{ij}^{4}=[x_{i},x_{i+\frac12}] \times [y_{j-\frac12},y_{j}]$. Since ${\bf B}_h^D$ is locally DF, we have $\nabla \cdot {\bf B}_h^D=0$ within each of these four subcells. 
Thus ${\rm div}_{ij} {\bf B}_{h}^{D} - \widetilde {\rm div}_{ij} {\bf B}_{h}^{D}=0$. It then follows from \eqref{key634} that 
 ${\bf U}_{\Delta t}^{C,{\rm new}} \cdot {\bf n}^\ast + \frac{ |{\bf B}^*|^2 }2>0$ for any auxiliary variables ${\bf v}^{\ast},{\bf B}^{\ast} \in \mathbb R^3$. This together with ${\bf U}_{\Delta t}^{C,{\rm new}} \cdot {\bf n}_1>0$ implies ${\bf U}_{\Delta t}^{C,{\rm new}} \in G_*=G$, according to the GQL representation in \cref{theo:eqDefG}. 
Similarly, one can show $\overline{ {\bf U} }_{i+\frac{1}{2},j+\frac{1}{2}}^{D} + \Delta t {\mathbfcal L}_{i+\frac{1}{2},j+\frac{1}{2}}^{\rm new} ( {\bf U}_{h}^{D}, {\bf U}_{h}^{C} ) \in G$. 
The proof is completed. 
\end{proof}

\begin{remark}\label{rem:GPimportance}
	As seen from the proof of \cref{theorem:new-positivity-2d-high}, 
	the locally DF property and the suitable source term discretizations 
	\eqref{eq:Hprimal}--\eqref{eq:Hdual} are essential for achieving 
	the PP property. 
	Our carefully discretized source terms \eqref{eq:Hprimal}--\eqref{eq:Hdual} provide the 
	 discrete divergence terms $\Delta t ({\bf v}^{\ast}\cdot{\bf B}^{\ast}) \widetilde {\rm div}_{ij} {\bf B}_{h}^{D} $, 
	 which, under the locally DF constraint, exactly cancel out the ``superfluous''  discrete divergence terms $-\Delta t ({\bf v}^{\ast}\cdot{\bf B}^{\ast})  {\rm div}_{ij} {\bf B}_{h}^{D} $ arising from the standard CDG method. 
	  The GQL approach with auxiliary variables has played a critical role in the above PP analysis and numerical design.   
\end{remark}

\begin{remark}
  The estimate wave speed $ a_\ell=\max\{ \hat a_\ell, \beta_\ell\}$ in  \cref{theorem:new-positivity-2d-high} is comparable to the standard one $a_\ell^{\rm std}:= \max\{ \| {\mathscr{R}}_\ell ({\bf U}^C_h)\|_\infty,\|{\mathscr{R}}_\ell({\bf U}_h^D) \|_\infty \} $. 
  In fact, for smooth solutions, 
  one has $\hat a_\ell \le a_\ell^{\rm std} + {\mathcal O}(h)$ from  \eqref{eq:defa1}--\eqref{eq:defa2} and \eqref{eq:aaaaWKL}, where $h=\max\{\Delta x,\Delta y\}$, 
  and $\beta_\ell = {\mathcal O}(h^{k+1})$ is much smaller than $\hat a_\ell$, so that  $a_\ell \le a_\ell^{\rm std} + {\mathcal O}(h)$. 
  Even in the discontinuous cases, $\beta_\ell$ does not cause strict restriction on $\Delta t$, 
  as justified theoretically by \cref{WU:prop} and verified numerically. 
  Moreover, our numerical results in \cref{section:numerical-example} show that our CDG 
  schemes with a standard CFL number are still PP in most cases, which indicates the theoretical CFL condition  \eqref{eq:new-2D-CFL} is sufficient rather than necessary.  
\end{remark}

\begin{proposition}\label{WU:prop}
	For any ${\bf U}, \tilde{\bf U} \in G$, define $\dgal{ \rho }:=\frac12 (\rho + \tilde \rho)$ and $\jump{ B_\ell} :=  \tilde B_\ell - B_\ell $, then it holds that 
	$$
 \frac{ \left|\jump{ B_\ell} \right| }{2 \sqrt{  \dgal{ \rho } } } \le \frac12 \alpha_{\ell} ( {\bf U}, \tilde{\bf U} ), \qquad \ell \in \{1,2,3\}.
	$$
\end{proposition}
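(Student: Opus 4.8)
The plan is to work directly from the explicit formula \eqref{eq:alpha_i} for $\alpha_\ell({\bf U},\tilde{\bf U})$ and keep only what is essential. Since $|v_\ell|,|\tilde v_\ell|\ge 0$, the first two arguments of the maximum in \eqref{eq:alpha_i} are at least $\mathcal C_\ell$ and $\tilde{\mathcal C}_\ell$ respectively, and since $|{\bf B}-\tilde{\bf B}|\ge|B_\ell-\tilde B_\ell|=|\jump{B_\ell}|$, the first step is to note that $\alpha_\ell({\bf U},\tilde{\bf U})\ge\max\{\mathcal C_\ell,\tilde{\mathcal C}_\ell\}+\frac{|\jump{B_\ell}|}{\sqrt\rho+\sqrt{\tilde\rho}}$. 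Using $2\sqrt{\dgal{\rho}}=\sqrt{2(\rho+\tilde\rho)}$, the assertion $\frac{|\jump{B_\ell}|}{2\sqrt{\dgal{\rho}}}\le\frac12\alpha_\ell$ then reduces to $\frac{|\jump{B_\ell}|}{\sqrt{2(\rho+\tilde\rho)}}-\frac{|\jump{B_\ell}|}{2(\sqrt\rho+\sqrt{\tilde\rho})}\le\frac12\max\{\mathcal C_\ell,\tilde{\mathcal C}_\ell\}$. I would bound the left side using two elementary facts: first $\sqrt\rho+\sqrt{\tilde\rho}\le\sqrt{2(\rho+\tilde\rho)}$ (equivalently $2\sqrt{\rho\tilde\rho}\le\rho+\tilde\rho$), which gives $\frac{1}{\sqrt{2(\rho+\tilde\rho)}}-\frac{1}{2(\sqrt\rho+\sqrt{\tilde\rho})}\le\frac{1}{2\sqrt{2(\rho+\tilde\rho)}}$; and second $|\jump{B_\ell}|\le|B_\ell|+|\tilde B_\ell|$. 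After these reductions it suffices to prove the single inequality $\max\{\mathcal C_\ell,\tilde{\mathcal C}_\ell\}\ge\frac{|B_\ell|+|\tilde B_\ell|}{\sqrt{2(\rho+\tilde\rho)}}$.

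This needs two ingredients, which I would carry out in this order. First, the sharp lower bound $\mathcal C_\ell\ge|{\bf B}|/\sqrt\rho$ (hence $\mathcal C_\ell\ge|B_\ell|/\sqrt\rho$), obtained from the formula for $\mathcal C_\ell$ in \cref{theo:MHD:LLFsplit}: since $B_\ell^2\le|{\bf B}|^2$, the quantity under the inner square root is at least $\big(\frac{|{\bf B}|^2}{\rho}+\mathcal C_s^2\big)^2-\frac{4|{\bf B}|^2\mathcal C_s^2}{\rho}=\big(\frac{|{\bf B}|^2}{\rho}-\mathcal C_s^2\big)^2$, so that $\mathcal C_\ell^2\ge\frac12\big[\frac{|{\bf B}|^2}{\rho}+\mathcal C_s^2+\big|\frac{|{\bf B}|^2}{\rho}-\mathcal C_s^2\big|\big]=\max\{|{\bf B}|^2/\rho,\mathcal C_s^2\}\ge B_\ell^2/\rho$, and similarly for $\tilde{\mathcal C}_\ell$; hence $\max\{\mathcal C_\ell,\tilde{\mathcal C}_\ell\}\ge\max\{|B_\ell|/\sqrt\rho,\,|\tilde B_\ell|/\sqrt{\tilde\rho}\}$. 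Second, the purely algebraic fact that for all $a,b\ge0$ and $r,\tilde r>0$, $\max\{a/r,\,b/\tilde r\}\ge\frac{a+b}{\sqrt{2(r^2+\tilde r^2)}}$: assuming without loss of generality $a/r\ge b/\tilde r$ (so $rb\le a\tilde r$), clearing denominators reduces this to $a\big(\sqrt{2(r^2+\tilde r^2)}-r\big)\ge rb$, and using $rb\le a\tilde r$ it is enough that $\sqrt{2(r^2+\tilde r^2)}\ge r+\tilde r$, which is just $(r-\tilde r)^2\ge0$. Taking $a=|B_\ell|$, $b=|\tilde B_\ell|$, $r=\sqrt\rho$, $\tilde r=\sqrt{\tilde\rho}$ (so $r^2+\tilde r^2=\rho+\tilde\rho$) gives exactly the inequality needed at the end of the first paragraph, completing the argument.

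The steps are all routine; the one point that requires care is the first ingredient, extracting the sharp bound $\mathcal C_\ell\ge|{\bf B}|/\sqrt\rho$ from the fast-magnetosonic-type expression. The naive estimate $\mathcal C_\ell\ge|{\bf B}|/\sqrt{2\rho}$ (obtained by simply dropping the inner square root) loses a factor $\sqrt2$ and is insufficient to close the constants, so one must retain the discriminant term. Equally, bookkeeping the several $\sqrt2$ factors so that the elementary estimates chain together exactly needs attention. It is worth noting that the constant in the proposition is sharp: for antiparallel fields ${\bf B}=-\tilde{\bf B}=c\,{\bf e}_\ell$ with $\rho=\tilde\rho$, vanishing velocities and a sufficiently small pressure, one has $\mathcal C_\ell=\tilde{\mathcal C}_\ell=|c|/\sqrt\rho$ and $\alpha_\ell=2|c|/\sqrt\rho$, so both sides of the asserted inequality equal $|c|/\sqrt\rho$.
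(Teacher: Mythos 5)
Your proof is correct and is essentially the paper's own argument: both rest on the same two ingredients, namely the discriminant manipulation giving the sharp bound ${\mathcal C}_\ell \ge |B_\ell|/\sqrt{\rho}$ (hence $\max\{{\mathcal C}_\ell,\tilde{\mathcal C}_\ell\}\ge\max\{|B_\ell|/\sqrt{\rho},|\tilde B_\ell|/\sqrt{\tilde\rho}\}$) and the comparison $\sqrt{\rho}+\sqrt{\tilde\rho}\le\sqrt{2(\rho+\tilde\rho)}=2\sqrt{(\rho+\tilde\rho)/2}$ together with a mediant-type maximum inequality, so that $\frac12\alpha_\ell$ is split into $\frac12\max\{{\mathcal C}_\ell,\tilde{\mathcal C}_\ell\}$ plus half of the magnetic-jump term. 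The only difference is cosmetic bookkeeping of the factor $\frac12$: the paper bounds the left-hand side separately by $|{\bf B}-\tilde{\bf B}|/(\sqrt{\rho}+\sqrt{\tilde\rho})$ and by $\max\{{\mathcal C}_\ell,\tilde{\mathcal C}_\ell\}$ (via $\frac{|B_\ell|+|\tilde B_\ell|}{\sqrt{\rho}+\sqrt{\tilde\rho}}\le\max\{\frac{|B_\ell|}{\sqrt{\rho}},\frac{|\tilde B_\ell|}{\sqrt{\tilde\rho}}\}$) and then averages the two bounds, whereas you subtract half of the jump term first and prove the rescaled inequality $\max\{a/r,b/\tilde r\}\ge(a+b)/\sqrt{2(r^2+\tilde r^2)}$ directly, which is equivalent for this purpose.
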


\begin{proof}
	Using Jensen's inequality for the concave function $\sqrt{x}$ gives $\sqrt{ \dgal{ \rho } }  \ge \frac12 ( \sqrt{ \rho } + \sqrt{ \tilde \rho } )$. Thus 
\begin{equation}\label{keyeeww}
		 \frac{ \left|\jump{ B_\ell} \right| }{2 \sqrt{  \dgal{ \rho } } } \le \frac{ \left|\jump{ B_\ell} \right| }{ \sqrt{ \rho } + \sqrt{ \tilde \rho } }  \le \frac{ | {\bf B}- \tilde{\bf B} | } { \sqrt{ \rho } + \sqrt{ \tilde \rho } }. 
\end{equation}
On the other hand, the first inequality in \eqref{keyeeww} also implies that 
\begin{equation}\label{key3366}
	\frac{ \left|\jump{ B_\ell} \right| }{2 \sqrt{  \dgal{ \rho } } } \le \frac{ \left|\jump{ B_\ell} \right| }{ \sqrt{ \rho } + \sqrt{ \tilde \rho } }  \le \frac{ | B_\ell | + | \tilde B_\ell | }{ \sqrt{ \rho } + \sqrt{ \tilde \rho } } \le \max \left\{ \frac{| B_\ell |}{\sqrt{ \rho } }, \frac{| \tilde B_\ell |}{\sqrt{ \tilde \rho } } \right\} \le \max \left\{ {\mathcal C}_\ell,  \tilde {\mathcal C}_\ell \right\},
\end{equation}
where 
the last step follows from 
\begin{align*}
	\frac{ |B_\ell| }{\sqrt{\rho}} & \le  \sqrt{ \max \left\{ \frac{ |{\bf B}|^2 }{\rho}, {\mathcal C}_s   \right\} } =   \left[ \frac12\left(  \frac{ |{\bf B}|^2}{\rho} + {\mathcal {C}}_s^2 + \bigg|  \frac{ |{\bf B}|^2}{\rho} - {\mathcal {C}}_s^2 \bigg|  
	\right) \right]^\frac12
	\\
	& = \frac{1}{\sqrt{2}}  \left[   \frac{ |{\bf B}|^2}{\rho} + {\mathcal {C}}_s^2 + \sqrt{ \left(   \frac{ |{\bf B}|^2}{\rho} + {\mathcal {C}}_s^2 \right)^2 - 4 \frac{ | {\bf B} |^2 {\mathcal {C}}_s^2 }{\rho}  } \right]^\frac12 
 \le  {\mathcal {C}}_\ell.
\end{align*}
Combining \eqref{keyeeww} with \eqref{key3366} gives 
	$\frac{ \left|\jump{ B_\ell} \right| }{2 \sqrt{  \dgal{ \rho } } } 
	\le \frac12 \left( \max \big\{ {\mathcal C}_\ell,  \tilde {\mathcal C}_\ell \big\} + \frac{ | {\bf B}- \tilde{\bf B} | } { \sqrt{ \rho } + \sqrt{ \tilde \rho } }  \right) \le \frac12 \alpha_{\ell} ( {\bf U}, \tilde{\bf U} ).$
\end{proof}

\section{Numerical experiments}\label{section:numerical-example}
This section carries out several benchmark or demanding numerical tests on 1D and 2D MHD problems to verify the accuracy, robustness, and effectiveness of the proposed (locally) DF and PP CDG methods. We focus on the proposed third-order accurate PP CDG schemes ($k=2$) coupled with the explicit third-order accurate SSP Runge--Kutta time discretization \cite{GottliebShuTadmor2001}. Unless mentioned otherwise, we use the ideal EOS $p=(\gamma-1)\rho e$ with $\gamma  = 5/3$, the CFL number of $0.25$, and  
 $\theta = {\Delta t}/{\tau_{\max}}  = 1$. 

\subsection{1D  near-vacuum Riemann problem}
Consider 
a Riemann problem from \cite{Christlieb}. Its initial conditions, which involve very low density and low pressure, are given by 
\begin{equation*}
	(\rho, p, {\bf v}, {\bf B})(x,0)=
	\begin{cases}
		(10^{-12}, ~ 10^{-12}, ~ 0,  ~ 0,  ~ 0, ~ 0,  ~ 0,  ~ 0) \,,  ~  &  x < 0\,,  \\
		(1, ~ 0.5, ~ 0,  ~ 0,  ~ 0,  ~ 0,  ~ 1,  ~ 0) \,,  &  x > 0  \,.
	\end{cases} 
\end{equation*}
The computational domain is $[-0.5, 0.5]$ with outflow 
boundary conditions. Figure \ref{fig:Vacuum}  displays the density and thermal pressure at $t = 0.1$ simulated by our 
PP CDG method with $100$ cells, 
along with a reference solution with $1000$ cells. 
One can observe that the near-vacuum wave structures well captured by our scheme and agree with the results reported in \cite{Christlieb,WuShu2019}. Our numerical scheme maintains the positivity of density and pressure and is very robust in the whole simulation.

\begin{figure}[thbp]
	\centering
		\includegraphics[width=0.35\textwidth]
		{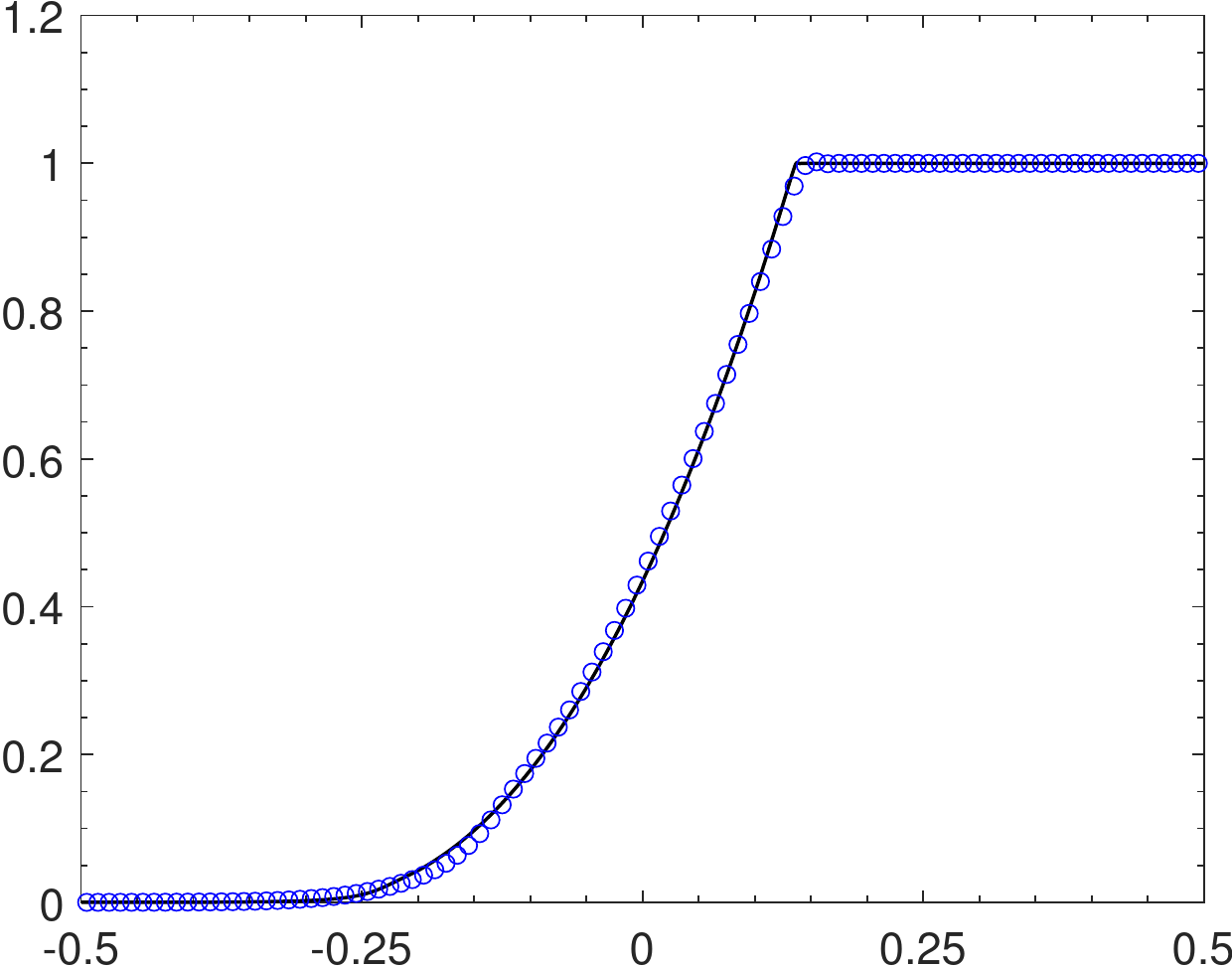}\qquad \qquad 
		\includegraphics[width=0.35\textwidth]
		{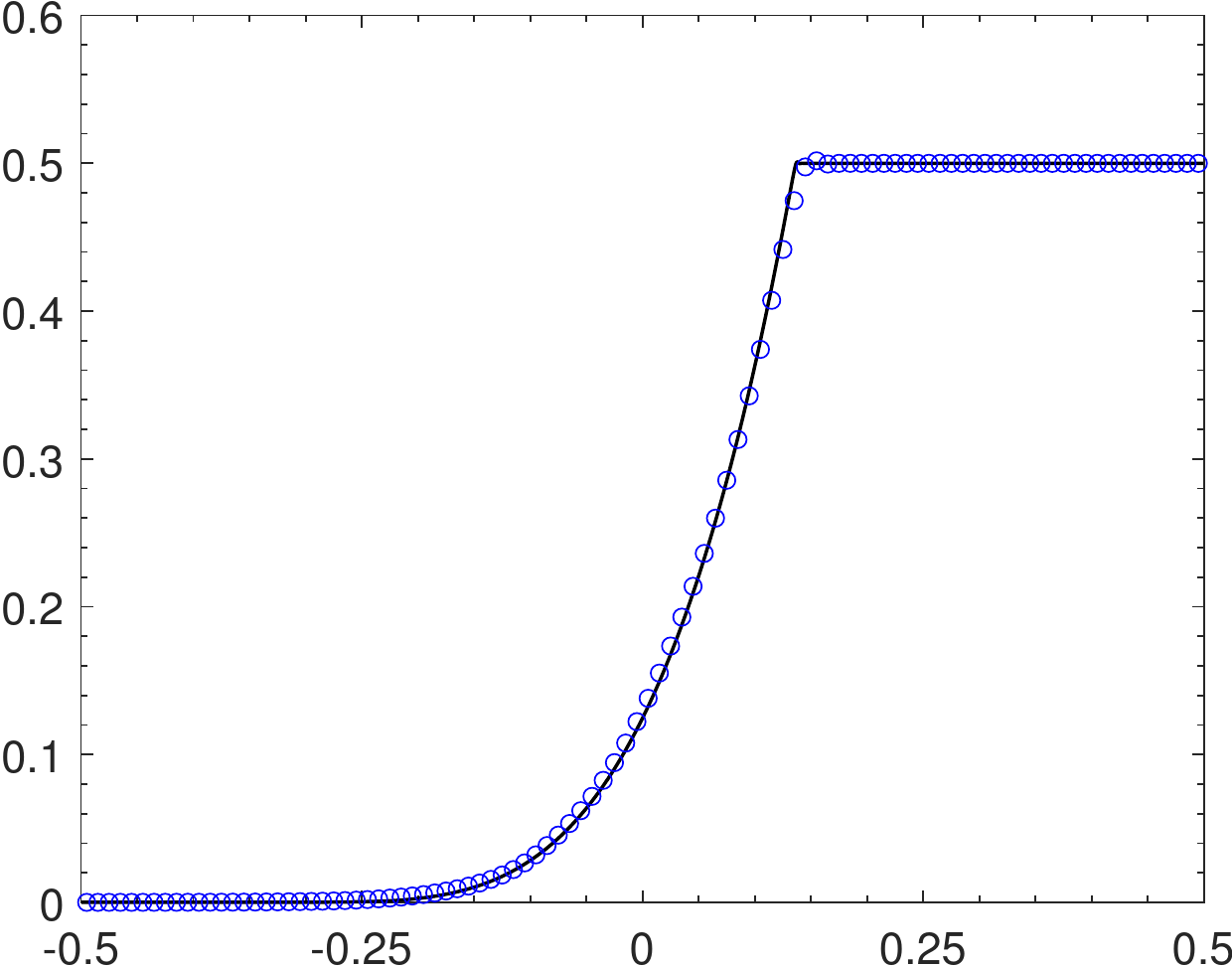}
		\captionsetup{belowskip=-12pt}
	\caption{Near-vacuum Riemann problem: density (left) and pressure (right) computed by the third-order PP CDG scheme with $100$ cells (circles)  and $1000$ cells (solid lines),
		respectively.}
	\label{fig:Vacuum}
\end{figure}

\subsection{Vortex problem with low pressure}
This example simulates a smooth MHD vortex problem \cite{Christlieb,WuShu2018} with very low pressure in the domain $[-10, 10]^2$ with periodic
boundary conditions.
The initial conditions are $(\rho, {\bf v}, p, {\bf B})(x,y,0) = (1, 1+\delta v_{1}, 1+\delta v_{2}, 0, 1+\delta p, \delta B_{1}, \delta B_{2}, 0)$ 
with vortex perturbations 
$( \delta v_{1} ,  \delta v_{2} ) = \dfrac{\mu}{\sqrt{2}\pi}e^{0.5(1-r^2)}(-y,x)$, $\delta p = - \dfrac{\mu^2(1+r^2)}{8\pi^2}e^{1-r^2}$, and 
$( \delta B_{1} ,  \delta B_{2} ) = \dfrac{\mu}{2\pi}e^{0.5(1-r^2)}(-y,x)
$, 
where $r = \sqrt{x^2 + y^2}$,  
and the vortex strength is set as $\mu = 5.389489439$. The lowest thermal pressure 
 is very small (about $5.3\times10^{-12}$) in the vortex center. 
As such, the CDG method would fail due to negative pressure, if we do not enforce the condition \eqref{pp-condition-2d} 
with the PP limiter. 
To assess the accuracy, 
we list in Table \ref{tab:Vortex} the errors in the momentum and the magnetic field  
at $t = 0.05$ for our third-order locally DF PP scheme. 
The results confirm that the  third order of convergence is 
achieved in $l^1$ norm. 

\begin{table}[h]
	\centering
	\captionsetup{belowskip=-3pt}
	\caption{Vortex problem: $l^{1}$ errors at $t = 0.05$ and the approximate rates of convergence 
		for the third-order locally DF PP CDG scheme.}
	\begin{tabular}{c|c|c|c|c|c|c|c|c}
		\hline Mesh  &\multicolumn{2}{|c|}{$m_1$}  &\multicolumn{2}{|c|}{$m_2$ }  &\multicolumn{2}{|c|}{$B_1$ }
		&\multicolumn{2}{|c}{$B_2$ }    \\
		\hline   $N \times N$  &$l^{1}$-error    & Rate      &$l^{1}$-error     & Rate       &$l^{1}$-error      & Rate  
		&$l^{1}$-error    & rate \\
		\hline $10\times10$  &4.65e-3	& -- 	&4.66e-3	& -- 	&3.34e-3	& -- 	&3.34e-3	& -- 	 \\
		\hline $20\times20$  &8.39e-4	& 2.47	&8.36e-4	& 2.48	&5.89e-4	&2.50	&5.89e-4	&2.50	 \\
		\hline $40\times40$  &1.16e-4	& 2.85	&1.16e-4	& 2.85	&8.14e-5	& 2.86	&8.14e-5	& 2.86	 \\
		\hline $80\times80$  &1.21e-5	& 3.27	&1.20e-5	& 3.27	&8.55e-6	& 3.25	&8.55e-6	& 3.25	 \\
		\hline $160\times160$ &1.28e-6	& 3.24	&1.27e-6	& 3.24	&9.04e-7	& 3.24	&9.04e-7	& 3.24	 \\
		\hline $320\times320$ &1.49e-7	& 3.10	&1.49e-7	& 3.10	&1.06e-7	& 3.10	&1.06e-7	&3.10    \\
		\hline $640\times 640$ &1.85e-8	& 3.01	&1.85e-8	& 3.01	&1.30e-8	&3.02	&1.30e-8	&3.02    \\
		\hline
	\end{tabular}
	\label{tab:Vortex}
\end{table}

We also quantitatively investigate the numerical  divergence error in the magnetic field. As in \cite{WuShu2020NumMath}, we measure the global relative divergence error in ${\bf B}_{h}^{C}$ on the primal mesh ${\mathcal T}_h^C$ by   
\begin{equation}
	\varepsilon_{\rm div} =  {\lVert \nabla\cdot{\bf B}_{h}^{C} \lVert}/{\lVert {\bf B}_{h}^{C} \lVert} \,,
\end{equation}
with  
\begin{align*}
	& \lVert \nabla\cdot{\bf B}_{h}^{C} \lVert  ~ := \sum\limits_{ {\mathcal E}_h^C \in {\mathcal T}_h^C } 
	\int_{ {\mathcal E}_h^C } \left| \jump{\langle{ {\bf n}, \bf B}_{h}^{C} \rangle} \right| {\rm d}s + 
	\sum\limits_{ i,j } 
	\int_{ I_{ij} } | \nabla\cdot{\bf B}_{h}^{C} | {\rm d}x{\rm d}y \,,
	\\
	&	\lVert {\bf B}_{h}^{C} \lVert  ~ := \sum\limits_{ {\mathcal E}_h^C  \in {\mathcal T}_h^C } 
	\int_{ {\mathcal E}_h^C } \dgal{ |{\bf B}_{h}^{C} | } {\rm d}s + \sum\limits_{ i,j } 
	\int_{ I_{ij} } \left| {\bf B}_{h}^{C} \right| {\rm d}x{\rm d}y \,.
\end{align*}
where $\jump{\langle{ {\bf n}, \bf B}_{h}^{C} \rangle}$ denotes the jump of the normal component of  ${\bf B}_{h}^{C}$ across the cell interfaces ${\mathcal E}_h^C$ of the primal mesh $\in {\mathcal T}_h^C$. 
Table \ref{tab:VortexDiv} lists the global divergence errors $\varepsilon_{\rm div}$ computed at different grid resolutions. 
It is seen that the errors $\varepsilon_{\rm div}$ decreases, as the mesh refines, at an approximately third-order rate.

\begin{table}[h]
	\centering
	\captionsetup{belowskip=-5pt}
	\caption{Vortex problem: global divergence errors $\varepsilon_{\rm div}$ at $t = 0.05$ and the approximate rates of convergence 
		for the third-order locally DF PP CDG scheme with increasing grid resolution.}
	\begin{tabular}{c|c|c|c|c|c|c|c}
		\hline   Mesh  & $10 \times 10$  &  $20\times 20$      & $40\times 40$  &  $80 \times 80$      & $160 \times 160$      &  $320 \times 320$
		& $640 \times 640$     \\
		\hline  $\varepsilon_{\rm div}$ &1.04e-1	& 2.13e-2	& 3.48e-3	&  4.56e-4	& 5.92e-5	& 7.58e-6	& 9.58e-7		 \\
		\hline  Rate & --	& 2.28	& 2.62	& 2.93	& 2.94	& 2.96 & 2.98		 \\
		\hline
	\end{tabular}
	\label{tab:VortexDiv}
\end{table}

\subsection{Orszag-Tang problem}

The Orszag--Tang problem \cite{Li2005} is a benchmark test for MHD codes. Although it does not involve low  pressure or density, we take it to verify the effectiveness and correct resolution of our scheme. 
The initial solution is given by $\rho = \gamma^2$, ${\bf v} = ( - \sin y, \sin x,  0)$, ${\bf B} = (- \sin y,   \sin 2x,  0)$, and $p  = \gamma$. 
The computational domain $\Omega = [0, 2\pi]\times[0, 2\pi]$ is divided into $200\times200$ cells with periodic boundary conditions on 
$\partial\Omega$. 
Figure \ref{fig:Orszag-Tang} plots the contours of $\rho$ at $t = 0.5$ 
and $t = 2$ computed by our third-order locally DF CDG method. 
As time evolves, the initial smooth flow develops into the complicated structures involving multiple shocks.  
 Our results are in good agreement with those  in \cite{Li2005,WuShu2018} by the non-central DG schemes, and the wave structures are correctly captured with high resolution by  
 our new locally DF CDG method. 
 Figure \ref{fig:Div-OT} displays 
the time evolution of the global divergence error $\varepsilon_{\rm div}$. We find that the magnitude of $\varepsilon_{\rm div}$ is kept below $10^{-3}$ during the whole simulation.

\begin{figure}[htbp]
	\centering
	{
		\includegraphics[width=0.39\textwidth]
		{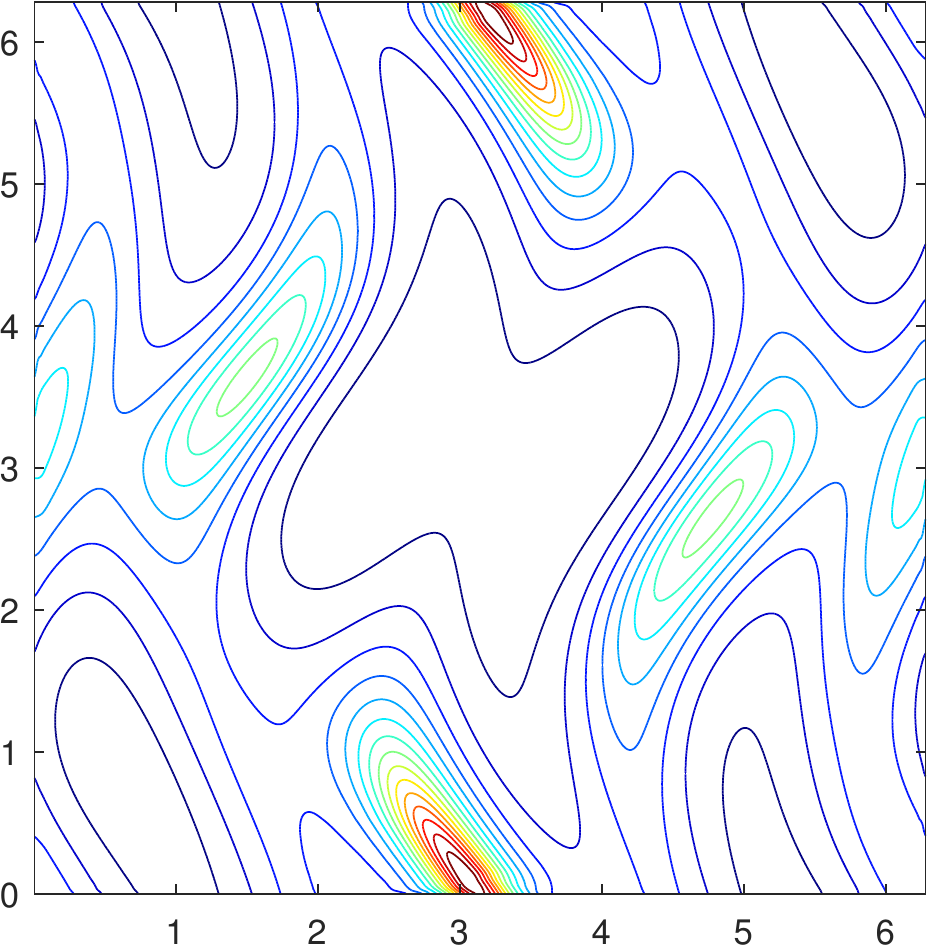}
	}
	\qquad \quad 
	{
		\includegraphics[width=0.39\textwidth]
		{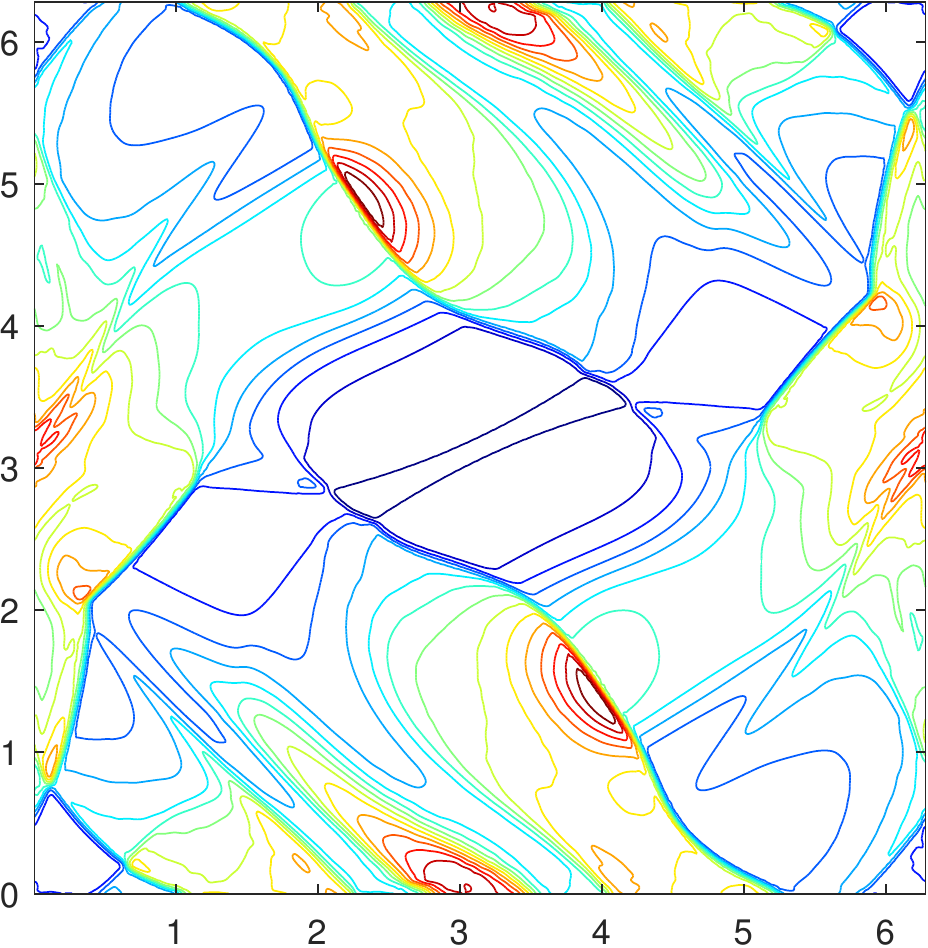}
	}
\captionsetup{belowskip=-12pt}
	\caption{Orszag--Tang problem:  density at $t = 0.5$ (left) and $t = 2$ (right). }
	\label{fig:Orszag-Tang}
\end{figure}

\begin{figure}[htbp]
	\centering
	\begin{subfigure}[b]{0.3\textwidth}
		\centering
		\includegraphics[width=\textwidth]
		{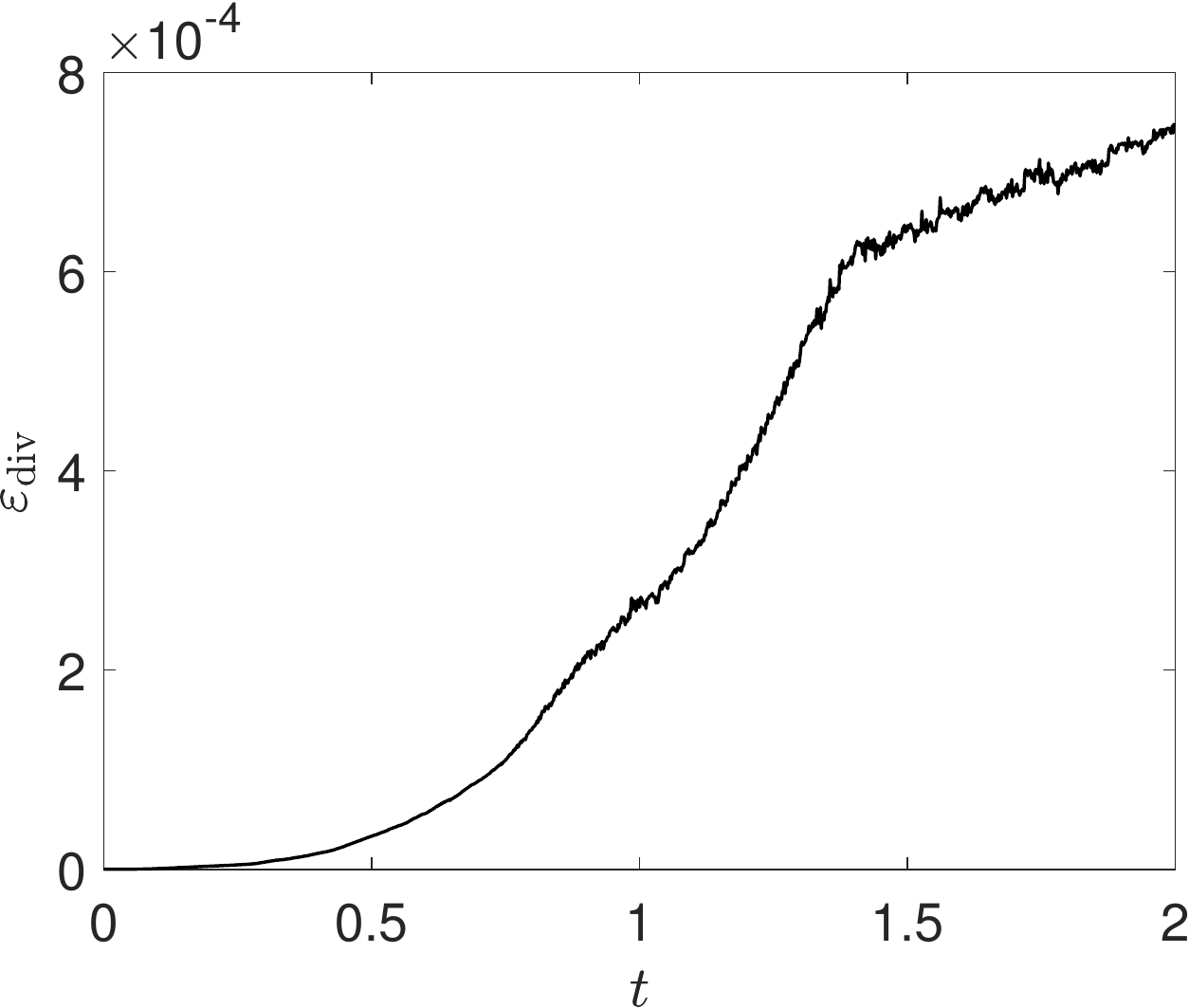}
		\captionsetup{belowskip=-8pt}
		\caption{Orszag--Tang problem}
		\label{fig:Div-OT}
	\end{subfigure}
	\hfill 
	\begin{subfigure}[b]{0.32\textwidth}
		\centering
		\includegraphics[width=\textwidth]
		{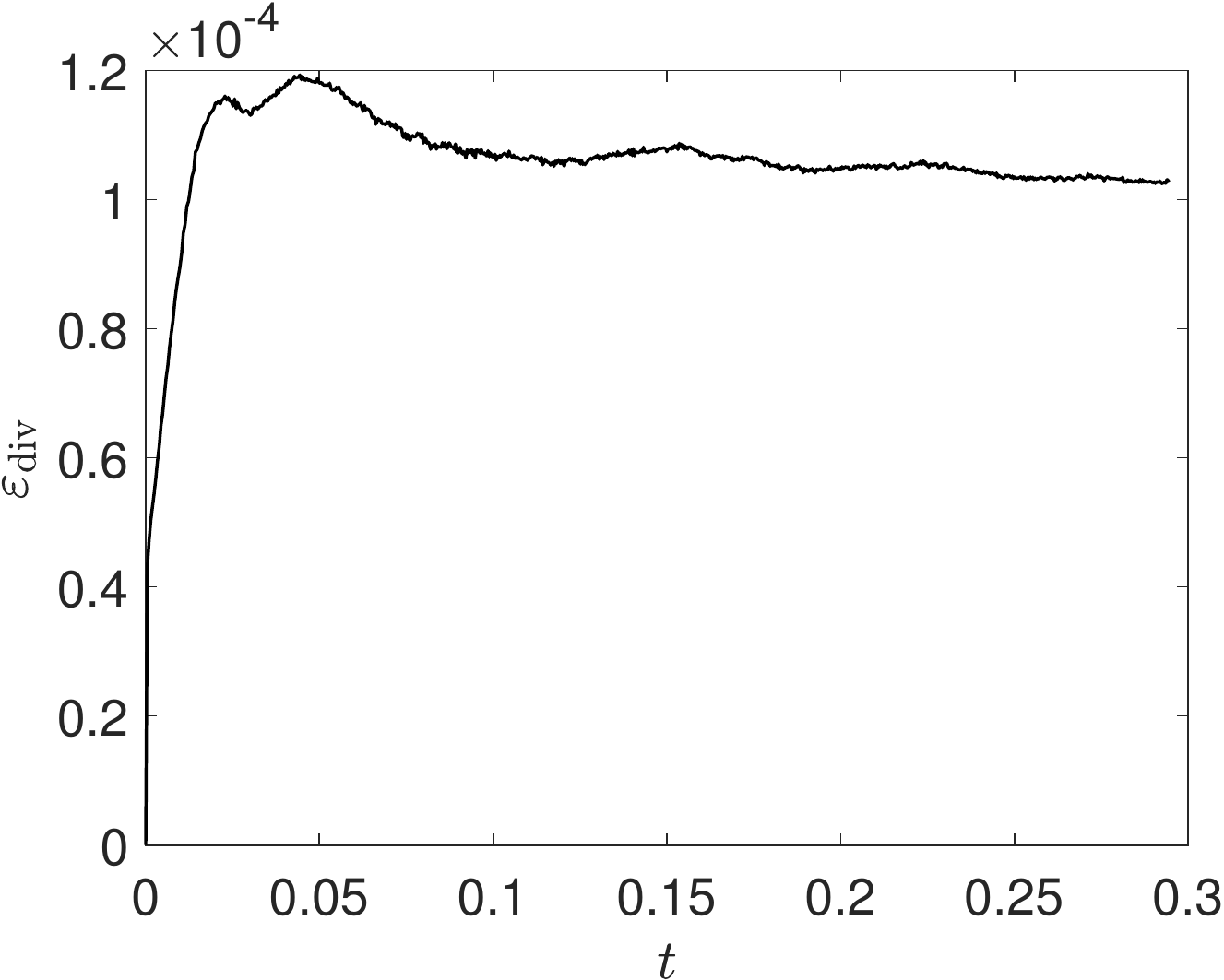}
		\captionsetup{belowskip=-8pt}
		\caption{Rotor problem}
		\label{fig:Div-RO}
	\end{subfigure}
	\begin{subfigure}[b]{0.325\textwidth}
		\centering
		\includegraphics[width=\textwidth]
		{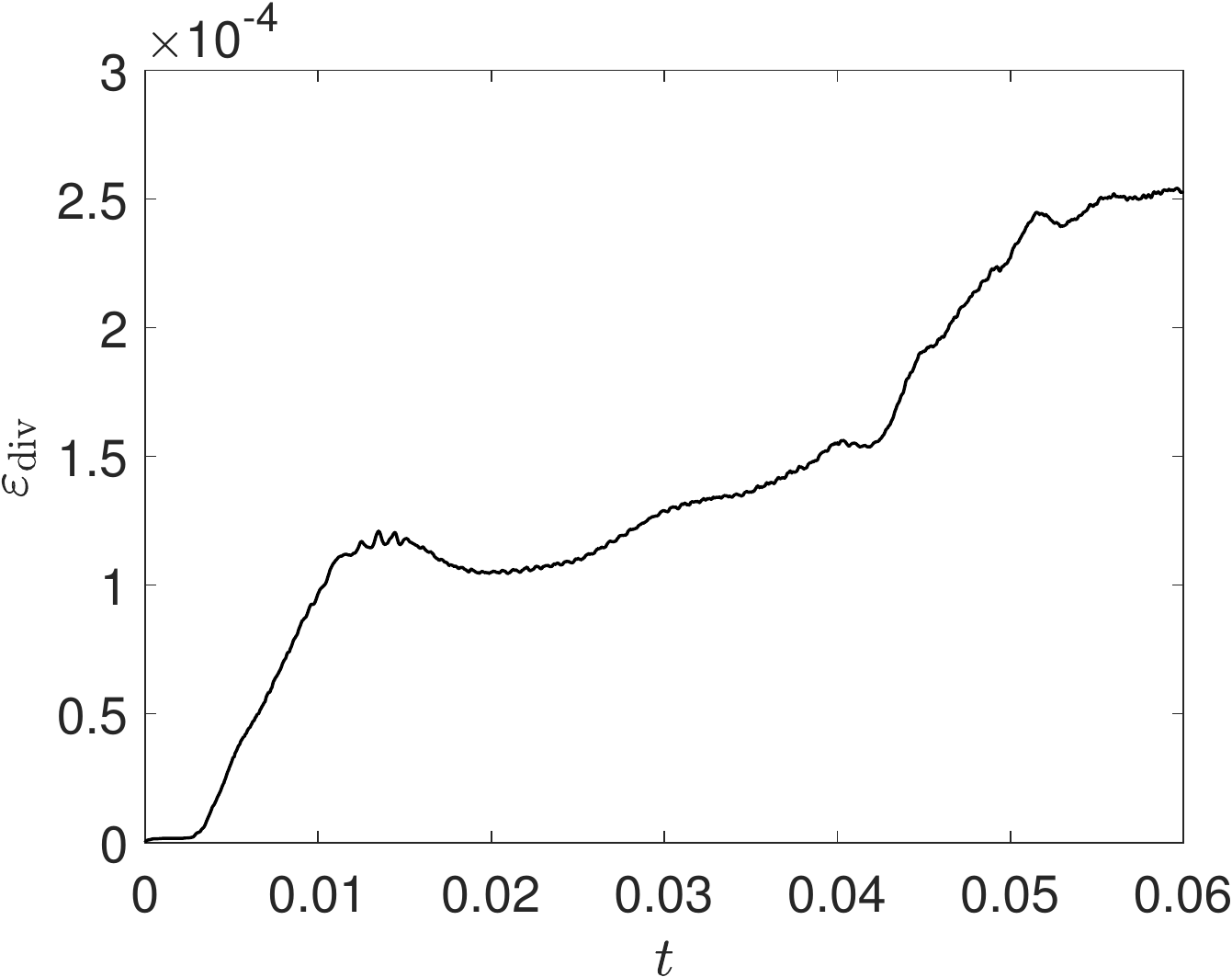}
		\captionsetup{belowskip=-8pt}
		\caption{Shock-cloud interaction}
		\label{fig:Div-SC}
	\end{subfigure}
	\captionsetup{belowskip=-12pt}
	\caption{Time evolution of the global divergence error $\varepsilon_{\rm div}$. } 
	\label{fig:3Div}
\end{figure}

\subsection{Rotor problem}
This is also a benchmark test \cite{BalsaraSpicer1999}, which describes a dense disk of fluid 
rotating in a ambient fluid, with the initial conditions given by  
\[
( p, ~ v_3, ~ B_1, ~ B_2, ~ B_3 ) = (0.5, ~ 0, ~ 2.5/\sqrt{4\pi}, ~ 0, ~ 0  ), 
\]
and
\begin{equation*} 
	( \rho, ~ v_1, ~ v_2 ) =
	\begin{cases}
		( 10, ~ -(y - 0.5)/r_0, ~ (x - 0.5)/r_0 )   \qquad  &{\rm if} ~~ r < r_0 \,,  \\
		( 1 + 9\lambda, ~ -\lambda (y - 0.5)/r, ~ \lambda(x - 0.5)/r )   \qquad  &{\rm if} ~~  r_0 < r < r_1 \,,  \\
		( 1, ~ 0, ~ 0 )   \qquad  &{\rm if} ~~ r > r_1 \,, 
	\end{cases} 
\end{equation*}
with $r = \sqrt{ (x-0.5)^2 + (y-0.5)^2 }$, $r_{0} = 0.1$, $r_{1} = 0.115$, $\lambda = (r_{1} - r) / (r_{1} - r_{0})$. The computational domain $\Omega = [ 0,1]\times[0,1]$ is divided into $200\times200$ uniform cells with 
outflow boundary conditions on $\partial\Omega$.
Figure \ref{fig:Rotor-Problem} shows the contour plots of the thermal pressure $p$ and the Mach number $|{\bf v}|/c_s$ 
at $t = 0.295$. 
Our results are consistent with those reported in \cite{BalsaraSpicer1999,WuShu2018}. 
Figure \ref{fig:Div-RO} plots the global divergence error $\varepsilon_{\rm div}$, which remains 
small and at order $\mathcal{O}(10^{-4})$.

\begin{figure}[htbp]
	\centering
	{
		\includegraphics[width=0.43\textwidth]
		{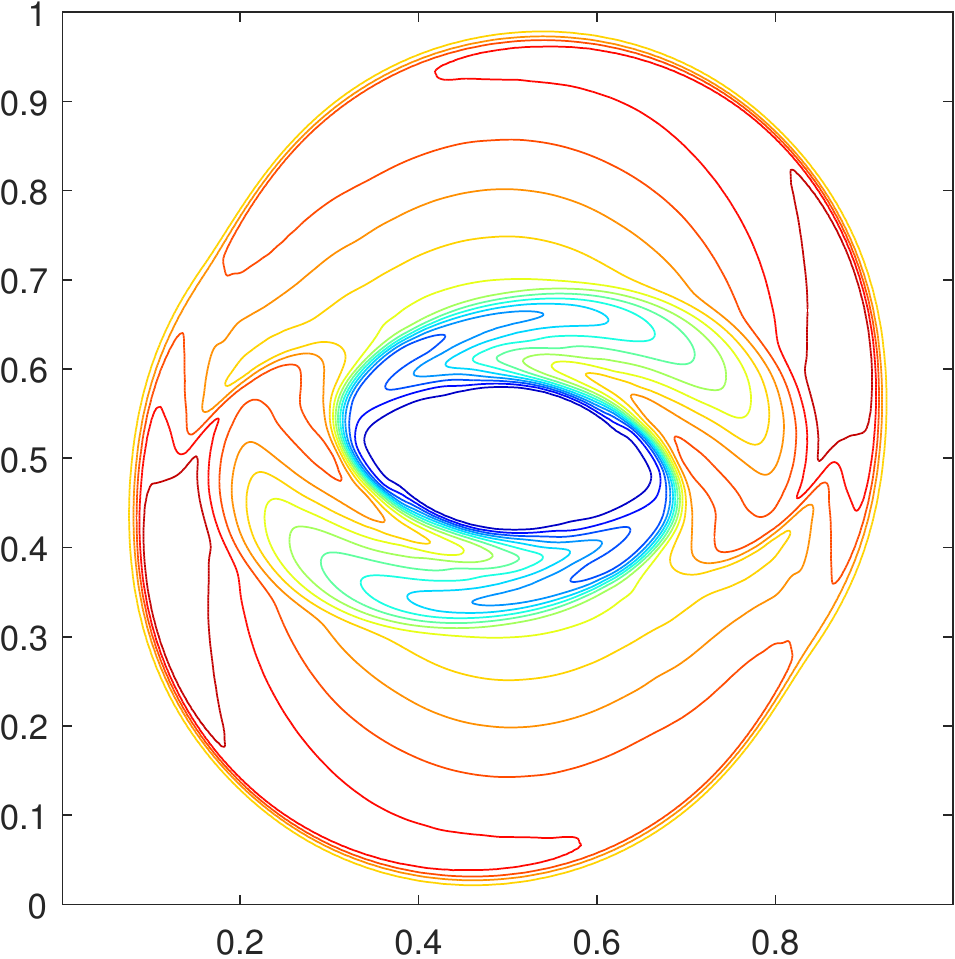}
	}
\quad 
	{
		\includegraphics[width=0.43\textwidth]
		{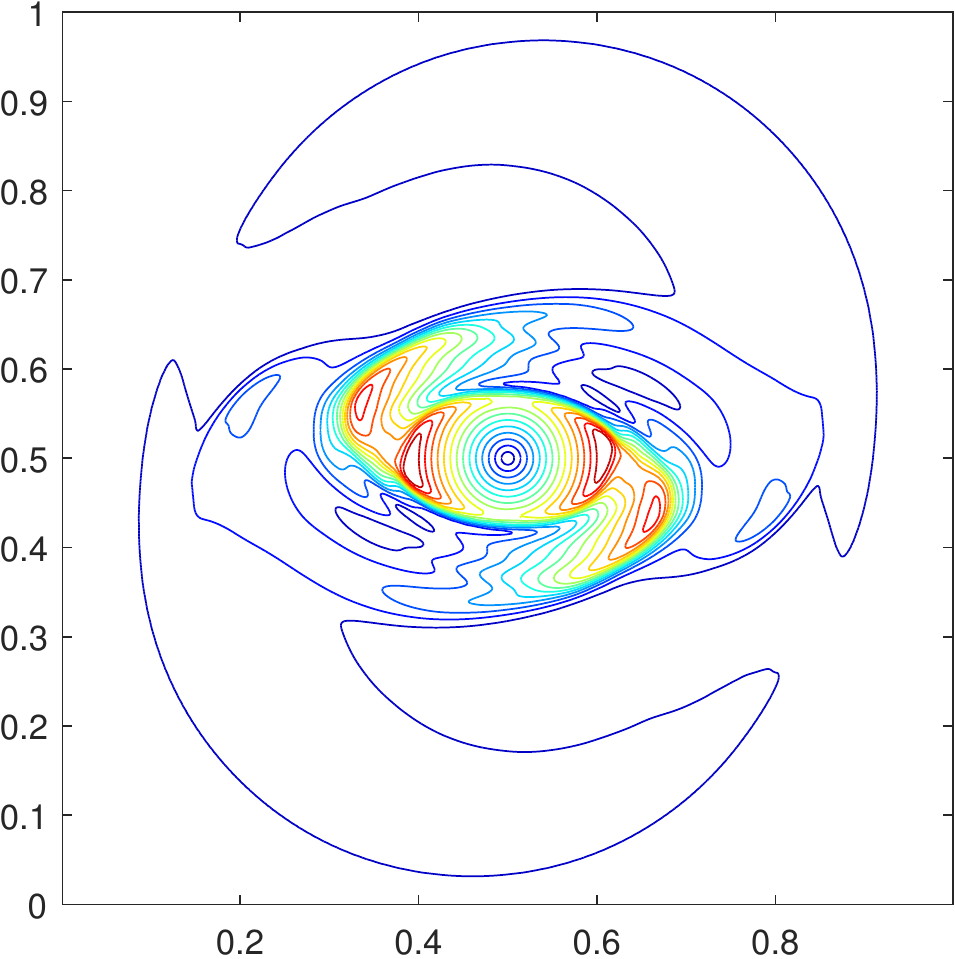}
	}
\captionsetup{belowskip=-12pt}
	\caption{Rotor problem: Contour plots of the thermal pressure (left) and Mach number (right) 
		at $t = 0.295$. }
	\label{fig:Rotor-Problem}
\end{figure}

\subsection{Shock cloud interaction} 
This test simulates the interaction of a high density cloud and a strong shock wave. 
It was originally introduced in \cite{Dai1998} and has become a benchmark for examining MHD schemes \cite{Toth2000,WuShu2018,WuShu2019}. 
Initially, there is a strong shock at $x = 0.6$, which is parallel to the $y$-axis. The left and right states of the shock are specified as 
$\rho_L = 3.86859$, $p_L=167.345$, ${\bf v}_L={\bf 0}$, ${\bf B}_L=(0, 2.1826182, -2.1826182)$, 
$\rho_R=1$, $p_R=1$, ${\bf v}_R=(-11.2536,0,0)$, and ${\bf B}_R=(0,0.56418958, 0.56418958)$, with a rotational discontinuity in the magnetic field. 
In front of the shock, a stationary circular cloud of radius $0.15$ is centered at $(0.8, 0.5)$. 
The cloud has a higher density of $10$ and the same pressure and magnetic field as the surrounding plasma.  
The computational domain $\Omega = [0, 1]^2$ is divided into $400\times400$ uniform rectangular cells, with the 
inflow condition on the 
 right boundary and the outflow conditions on the others. 
Figure \ref{fig:Shock-Cloud} presents the numerical thermal pressure and the magnitude of the magnetic pressure 
at $t = 0.06$ simulated by our locally DF PP CDG method. 
It is observed that the complicated flow structures and the discontinuities are resolved and agree with the 
results computed in \cite{Toth2000,WuShu2018,WuShu2019}. 
Figure \ref{fig:Div-SC} shows the evolution of the global divergence error $\varepsilon_{\rm div}$, which remains 
small and at order $\mathcal{O}(10^{-4})$. 
We also notice that if we do not enforce condition \eqref{pp-condition-2d} 
by using the PP limiter, the CDG solution will go outside the set $G$ and break down at time $t \approx 0.0366$.

\begin{figure}[htbp]
	\centering
	{
		\includegraphics[width=0.43\textwidth]
		{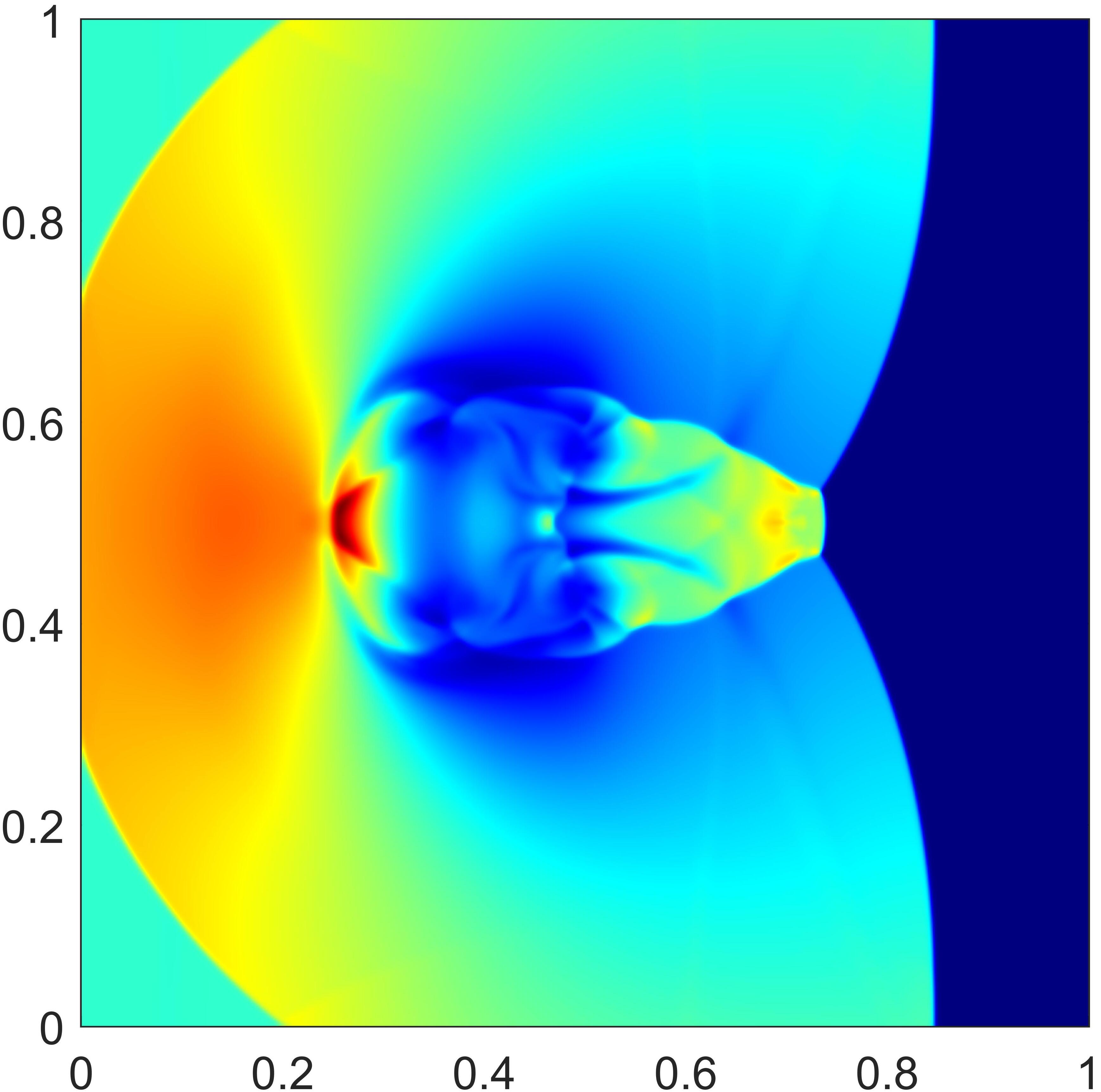}
	}
	\quad 
	{
		\includegraphics[width=0.43\textwidth]
		{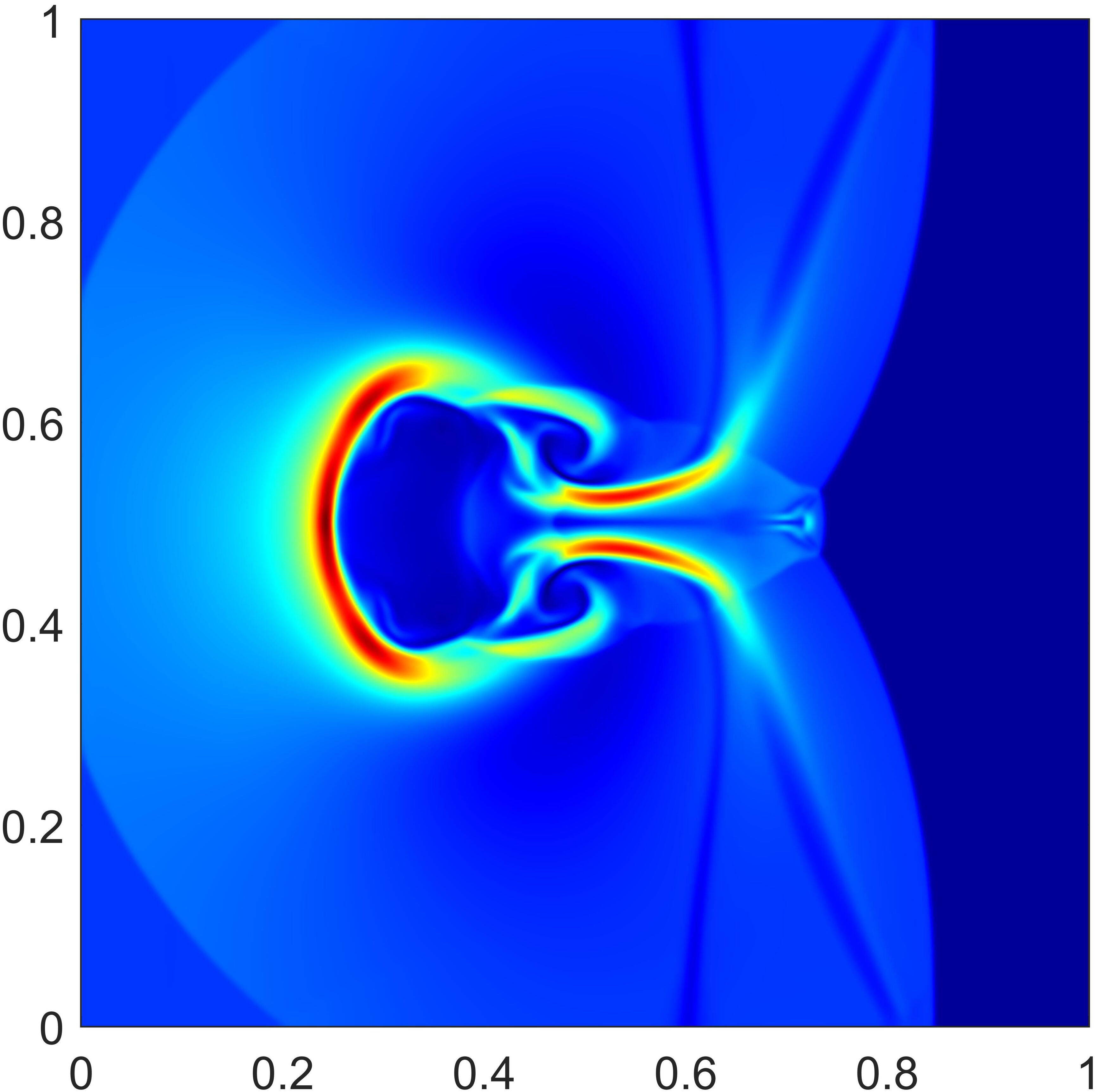}
	}
\captionsetup{belowskip=-12pt}
	\caption{Shock cloud interaction: the thermal pressure (left) and the magnitude of magnetic field (right). }
	\label{fig:Shock-Cloud}
\end{figure}

\subsection{Blast problems} 
The classical MHD blast wave  problem, originally proposed in \cite{BalsaraSpicer1999},  
represents a quite demanding test widely adopted to examine 
the positivity of numerical MHD schemes; see \cite{BalsaraSpicer1999,Christlieb,Wu2017a,WuShu2018,WuShu2019,WuShu2020NumMath}. 
The adiabatic index is taken as $\gamma = 1.4$. 
The computational domain is $\Omega = [-0.5,0.5]^2$ with outflow boundary conditions on $\partial\Omega$. 
Initially, $\Omega$ is filled with stationary fluid with ${\bf v}={\bf 0}$, $\rho=1$, and ${\bf B}=(B_0,0,0)$. The initial pressure $p$ is piecewise constant and has a circular jump on $x^2 + y^2=0.1^2$, with $p=p_e$ inside the circle and $p=0.1$ outside. 
We consider two blast problems: the classical version \cite{BalsaraSpicer1999} with 
\{$p_e = 10^3, B_0 = 100/\sqrt{4\pi} \}$,  
 and a much more extreme version \cite{WuShu2018} with $\{p_e = 10^4, B_0 = 1000/\sqrt{4\pi} \}$ (larger discontinuity in $p$ and stronger magnetic field).  
The plasma-beta $\beta$ is very small for both cases ($\beta \approx 2.51 \times 10^{-4} $ for the classical blast problem and $\beta = 2.51\times10^{-6}$ for the extreme blast problem), rendering their simulations highly challenging.  
Our locally DF PP CDG method works very robustly for both blast problems. The numerical results computed on the mesh of $200\times200$ cells are given in 
Figure \ref{fig:Blast12}. 
One can see that, for the classical blast problem, our simulation results are in good agreement with those reported in 
\cite{BalsaraSpicer1999,  Christlieb, Li2011,Wu2017a,WuShu2018,WuShu2019}, 
and our density profile does not have the numerical oscillations that were observed in \cite{BalsaraSpicer1999,  Christlieb}. 
Our flow patterns of the extreme blast problem are consistent with those in \cite{WuShu2018} simulated by a PP non-central DG method. 
{\em It is noticed that without the proposed PP techniques, the CDG code would break down quickly within a few time steps.}

\begin{figure}[htbp]
	\centering
		\includegraphics[width=0.32\textwidth]
		{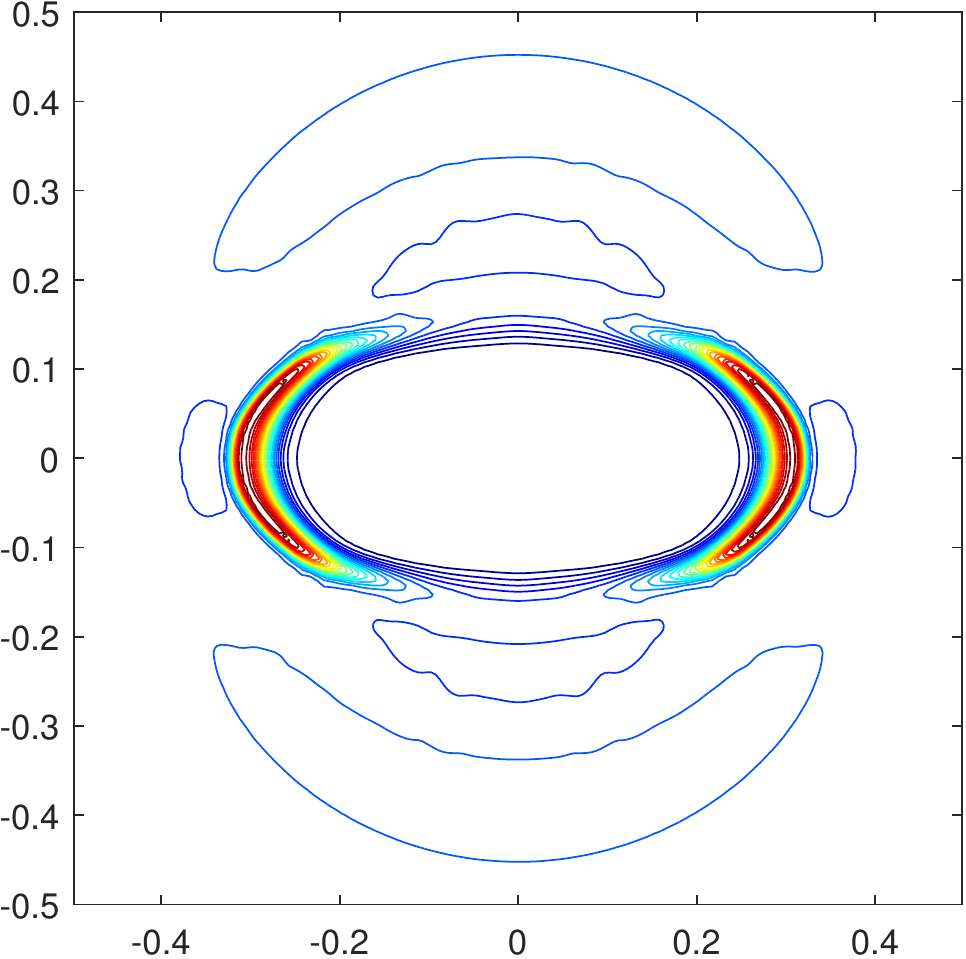}
		\includegraphics[width=0.32\textwidth]
		{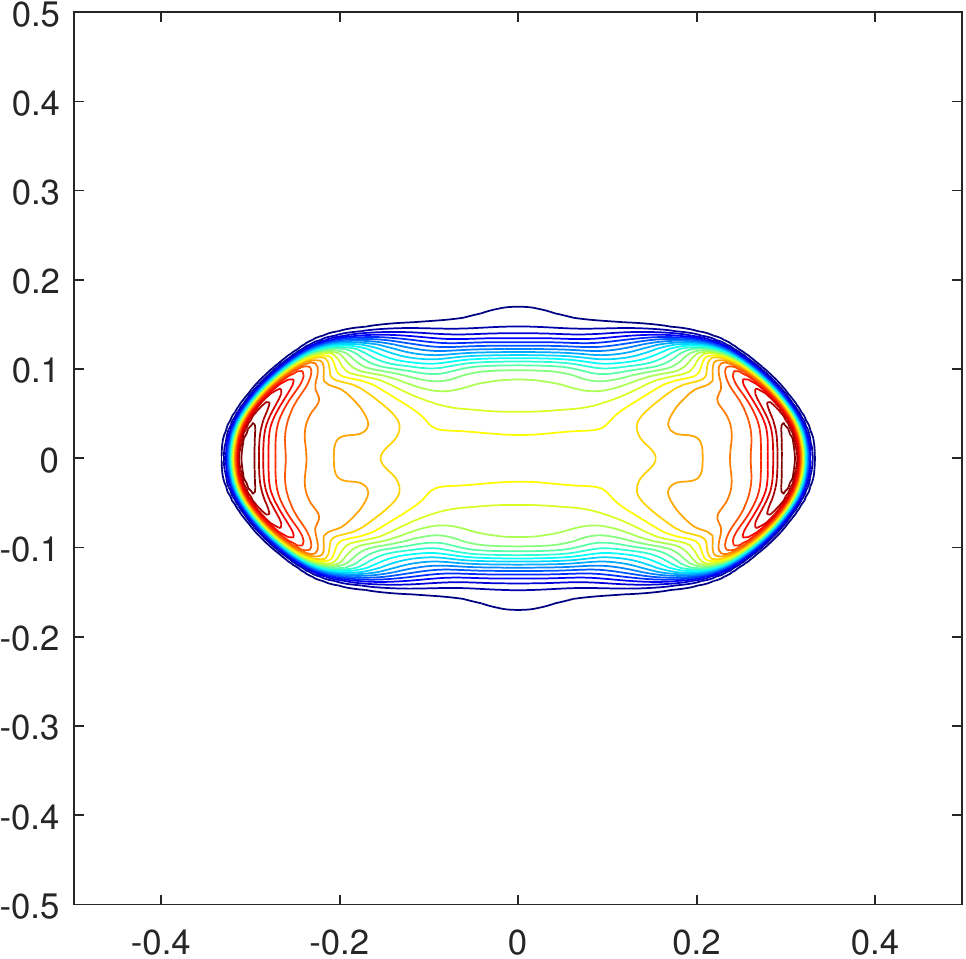}
		\includegraphics[width=0.32\textwidth]
		{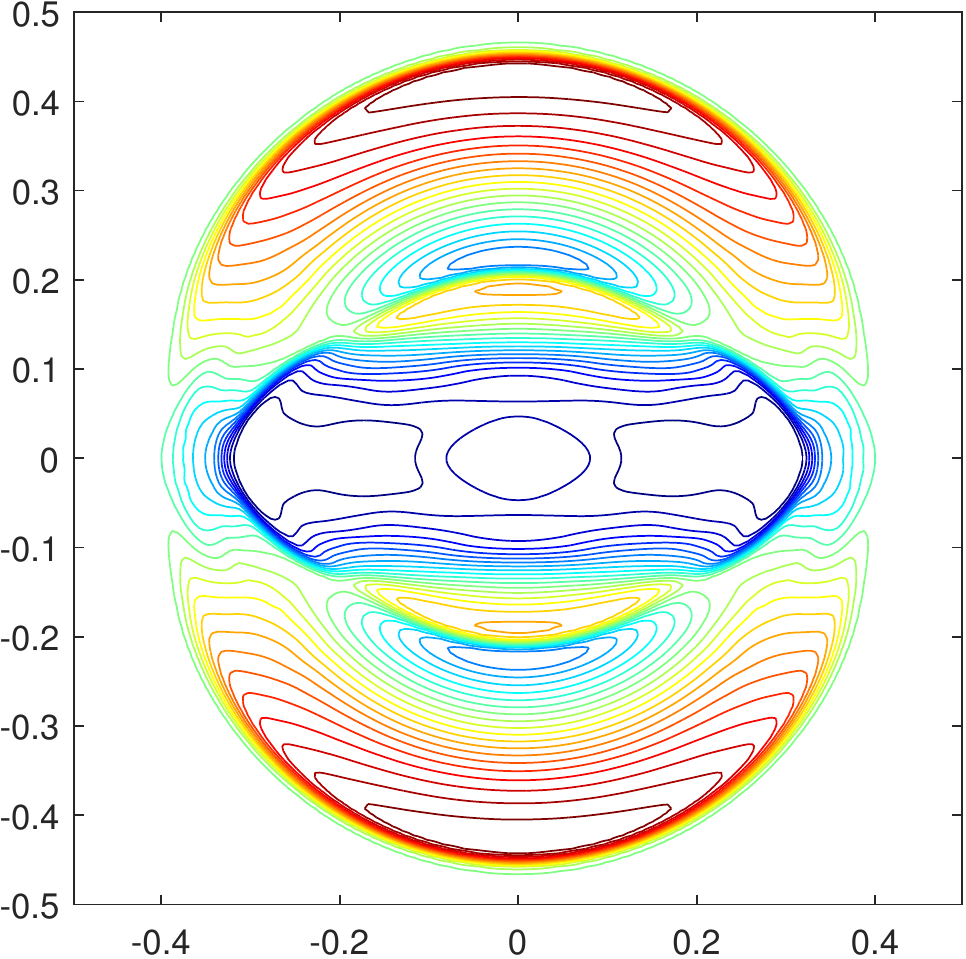}
\\
		\includegraphics[width=0.32\textwidth]
		{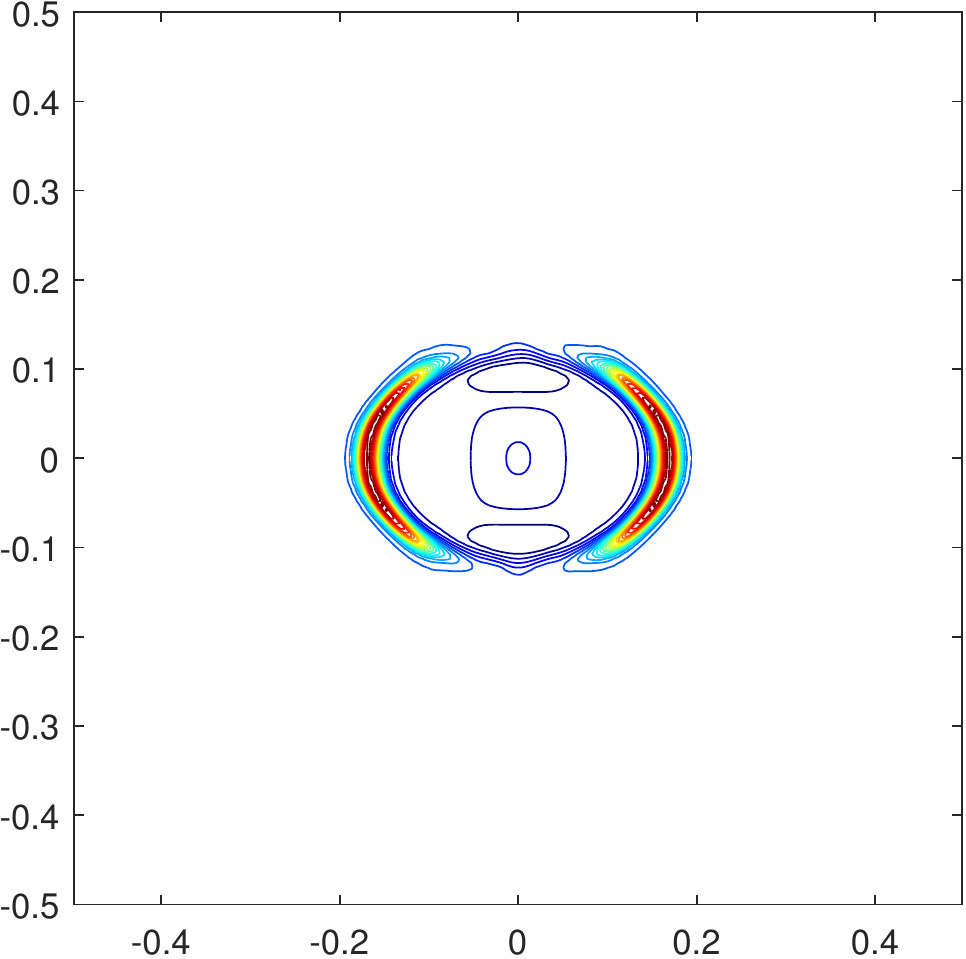}
		\includegraphics[width=0.32\textwidth]
		{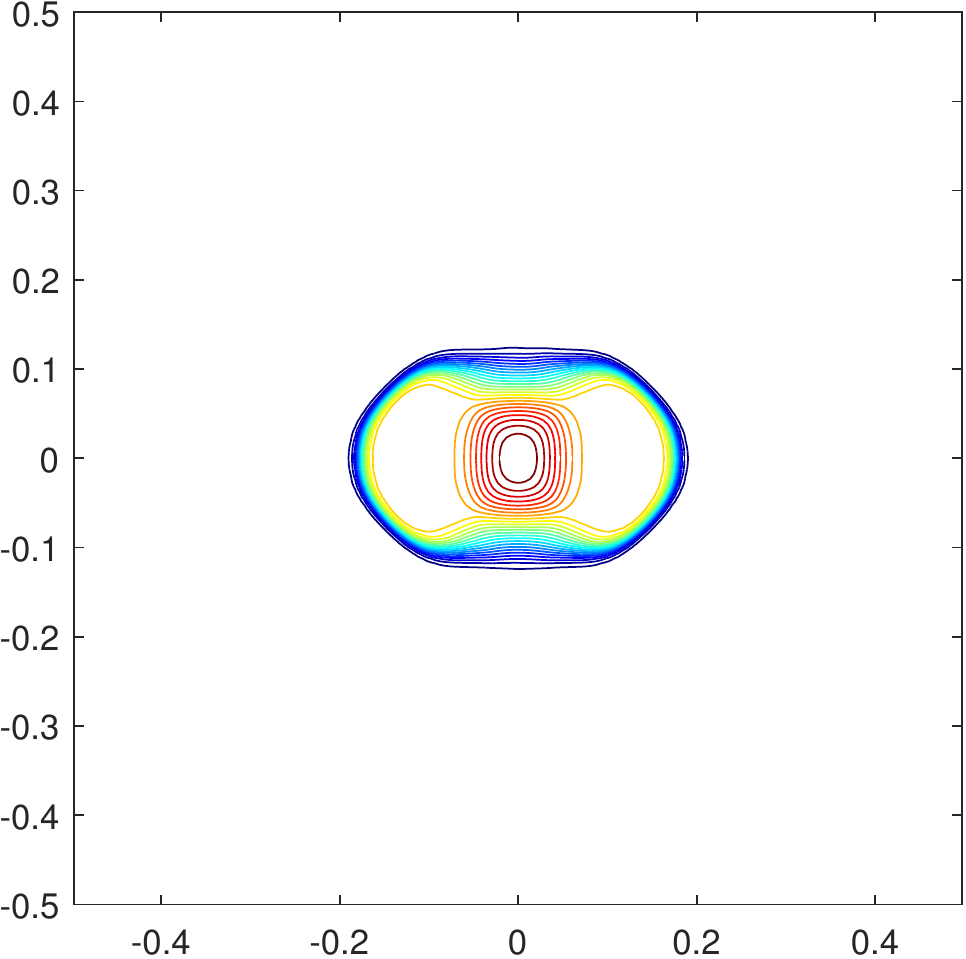}
		\includegraphics[width=0.32\textwidth]
		{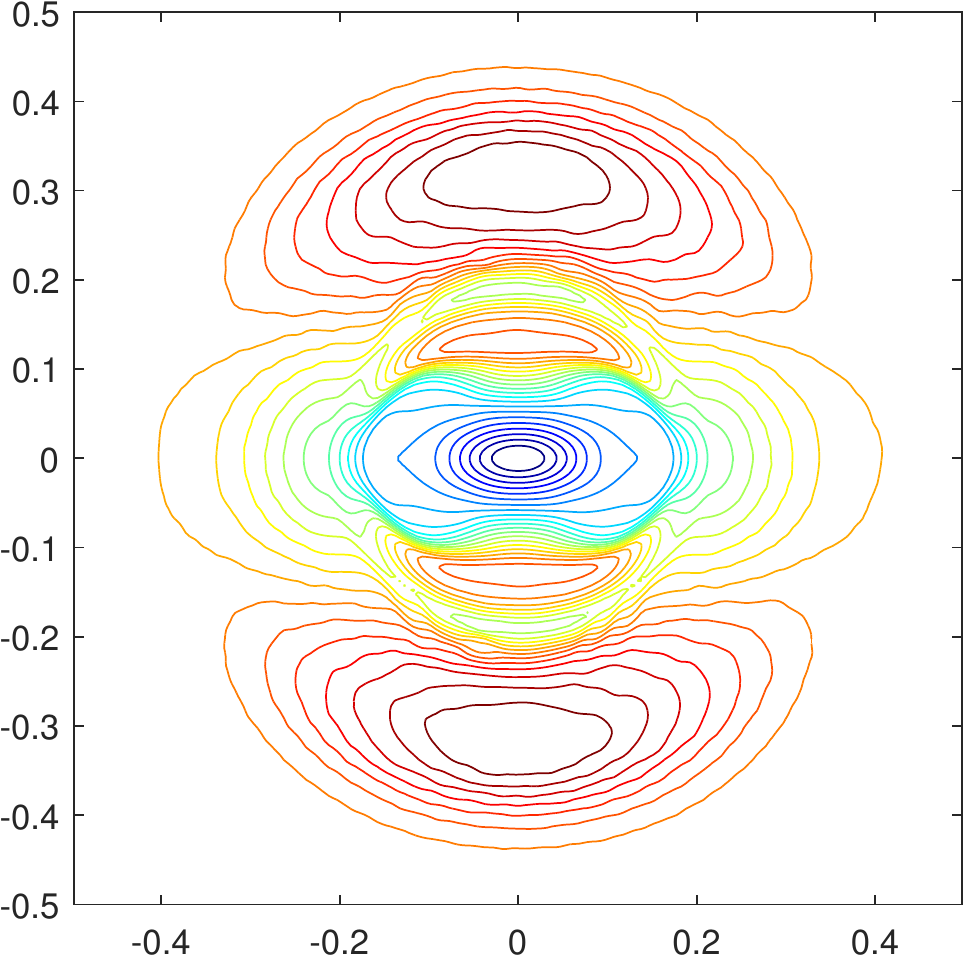}
		\captionsetup{belowskip=-12pt}
	\caption{Contour plots of density (left), thermal pressure (middle), and magnetic pressure (right). Top: results of the classical blast problem at $t = 0.01$. Bottom: results of the extreme blast problem at $t = 0.001$. }
	\label{fig:Blast12}
\end{figure}

\subsection{Astrophysical jets}

This test simulates three very challenging jet problems involving very high Mach number and strong magnetic fields. 
The setup is the same as in \cite{WuShu2018} and similar to the gas dynamical case in \cite{Balsara2012} with $\gamma = 1.4$. 
The domain $[-0.5, 0.5] \times [0, 1.5]$ is initially filled with the ambient plasma with 
 ${\bf v}=0$, $p=1$, and $\rho=0.14$. 
On the bottom boundary, the inflow jet condition ($\rho=1.4$, $p=1$, ${\bf v}=(0,800,0)$) is fixed for $x\in [-0.05,0.05]$ and $y=0$. 
All the other boundaries are set as outflow. 
The magnetic field is initialized as $(0,B_0,0)$ in the entire domain. 
We consider three 
configurations based on different strengths of $B_0$: {\em Case 1}: $B_{0} = \sqrt{200}$, and the plasma-beta $\beta =10^{-2}$; {\em Case 2}: $B_{0} = \sqrt{2000}$, and the plasma-beta $\beta=10^{-3}$; {\em Case 3}: $B_{0} = \sqrt{20000}$, and the plasma-beta $\beta=10^{-4}$. 
Since the jet Mach number is as high as $800$ 
and the magnetic field is very strong (especially in Case 3), so that the internal energy is much smaller than the kinetic/magnetic energy and negative numerical pressure can be easily produced.  
Without the proposed PP techniques the CDG code would break down within a few time steps. 
In the computation, we take the computational  domain as $[0, 0.5]\times[0, 1.5]$, divide it into $200\times600$ cells, 
use reflecting boundary condition on $x = 0$.  
The numerical results computed by our third-order locally DF PP CDG method are displayed in 
Figures \ref{fig:Jet-Density} within the domain $[-0.5, 0.5]\times[0, 1.5]$. 
We clearly see that the flow patterns are different for different strengths of $B_0$. 
The cocoons, bow shock, shear flows, and jet head location are well captured and agree with those in \cite{WuShu2018}, demonstrating the high resolution and excellent robustness of our locally DF PP CDG scheme. 
{\em It is worth mentioning that if we 
either remove our proposed discretization of the extra source term 
or neglect condition \eqref{pp-condition-2d} without using the PP limiter, 
then the simulation would fail due to the appearance of negative pressure.}

\begin{figure}[htb]
	\centering
	{
		\includegraphics[width=0.285\textwidth]
		{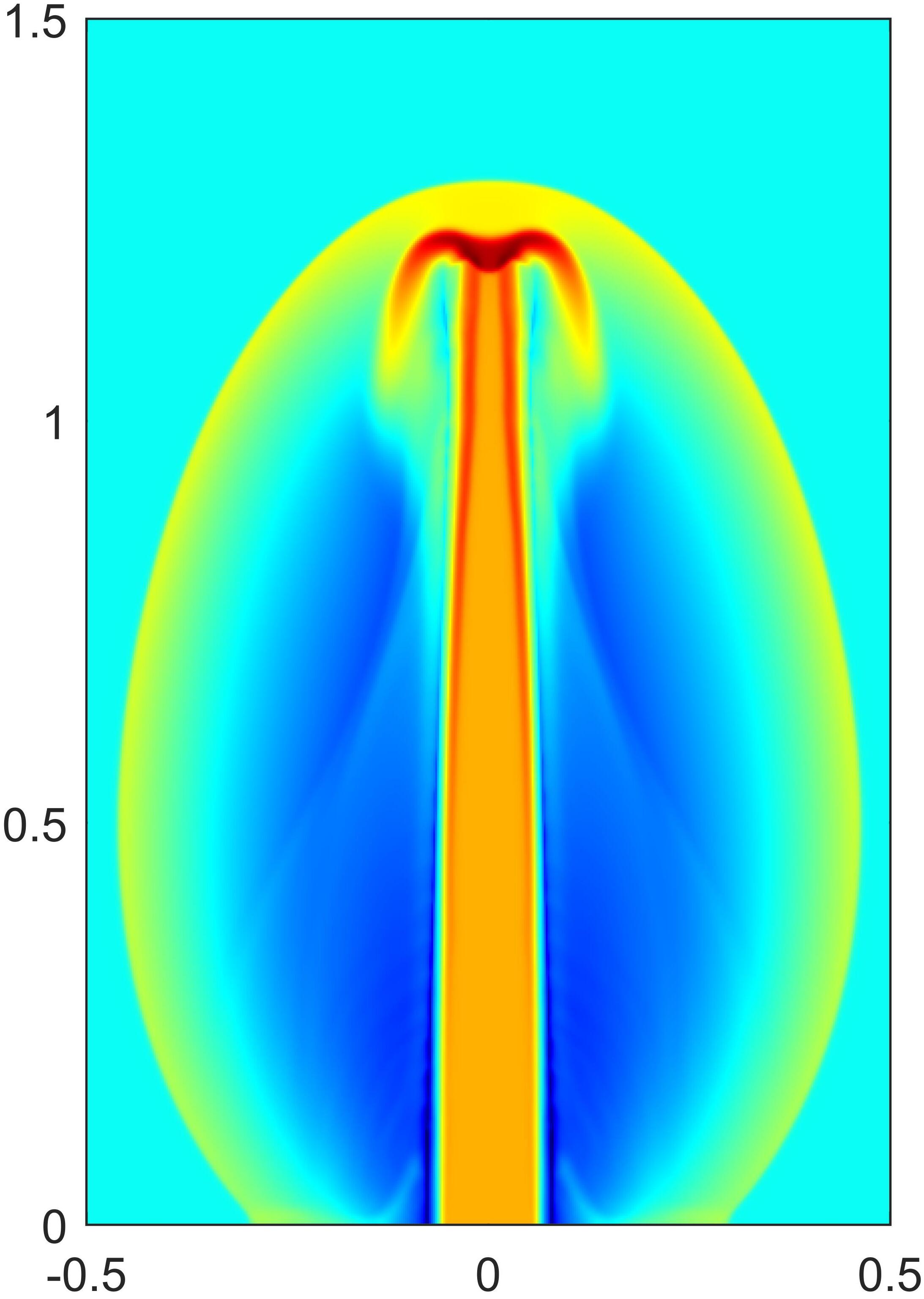}
	}
	\hfill
	{
		\includegraphics[width=0.285\textwidth]
		{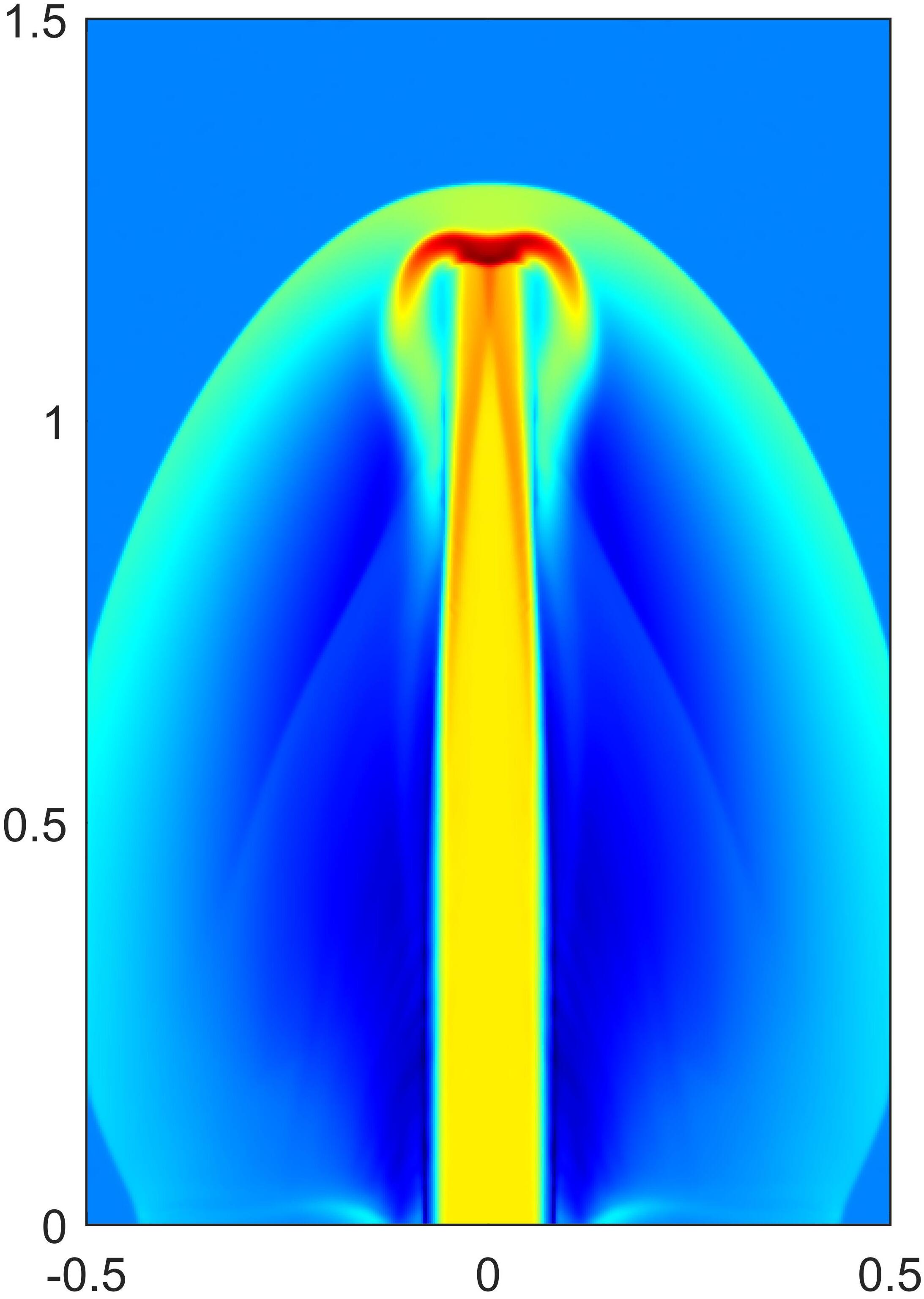}
	}
	\hfill
	{
		\includegraphics[width=0.285\textwidth]
		{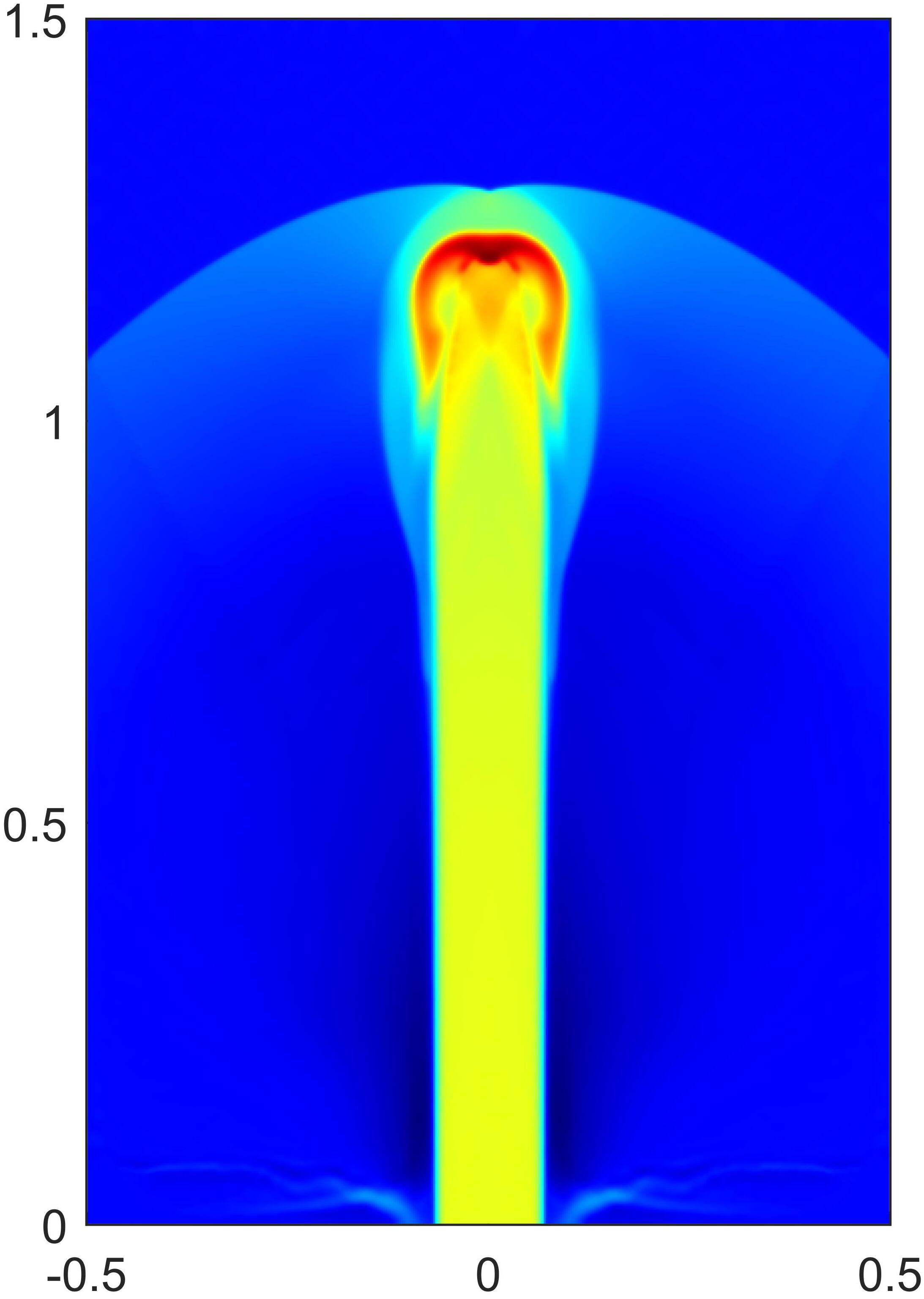}
	}
	\\
	{
		\includegraphics[width=0.285\textwidth]
		{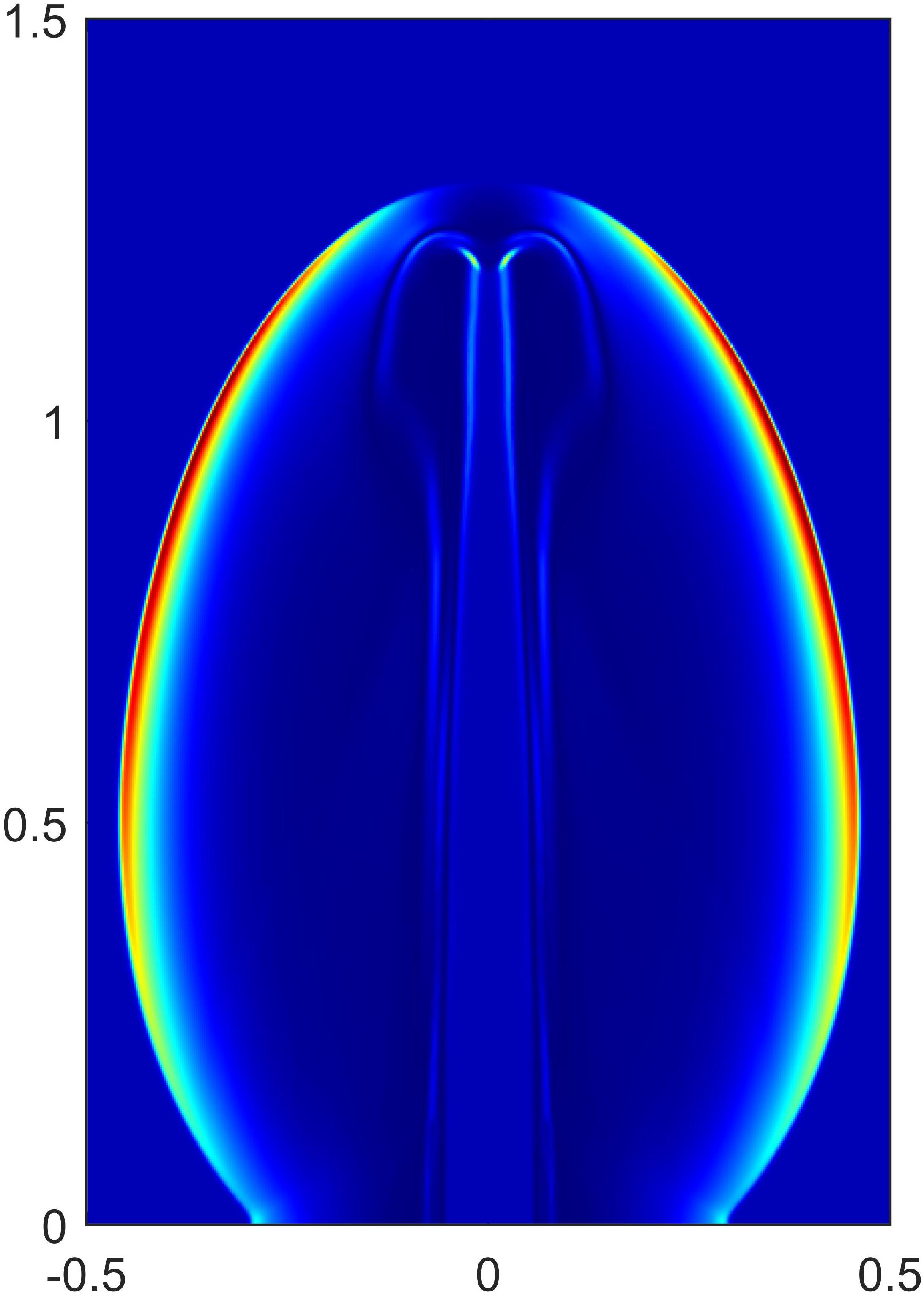}
	}
	\hfill
	{
		\includegraphics[width=0.285\textwidth]
		{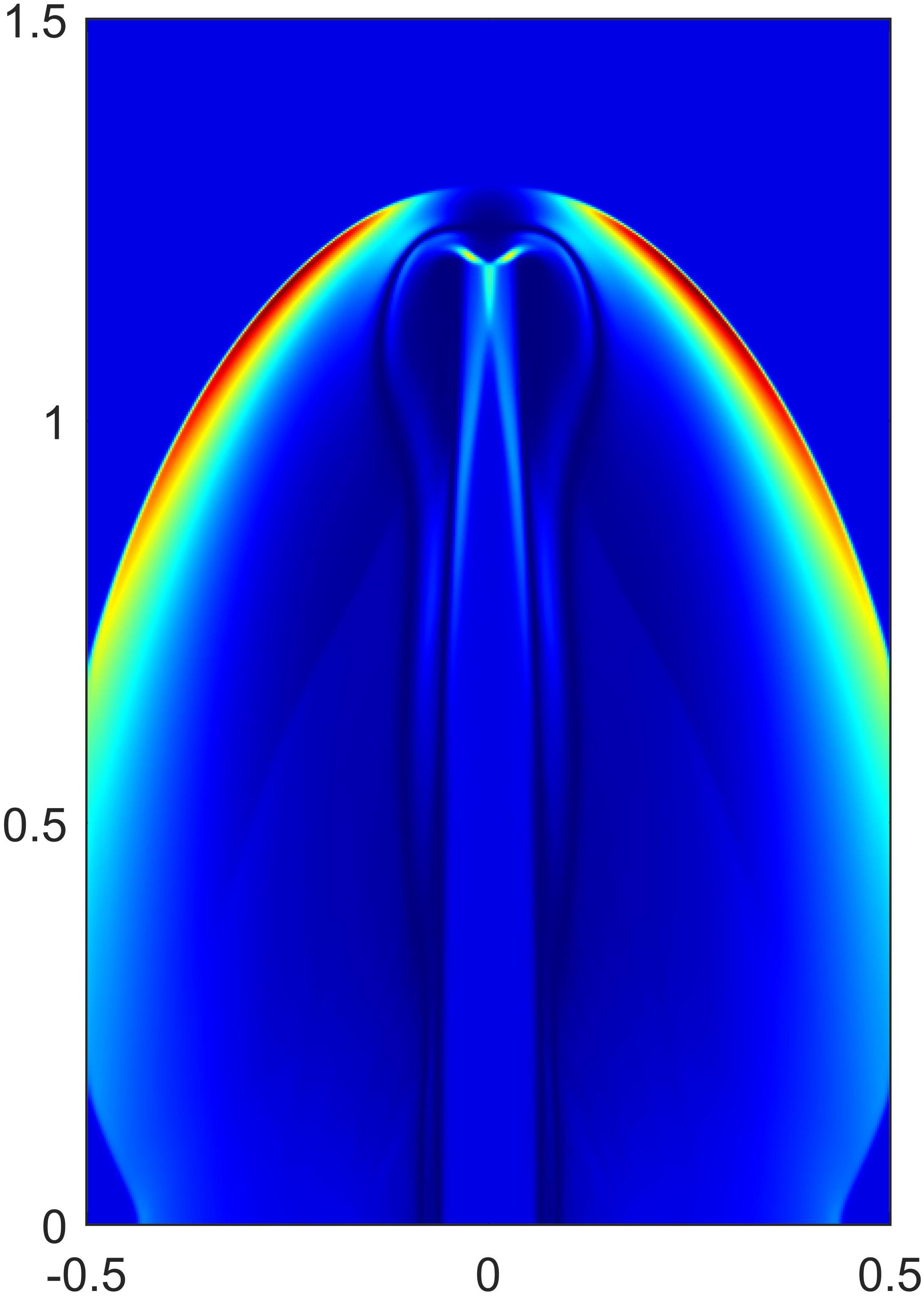}
	}
	\hfill
	{
		\includegraphics[width=0.285\textwidth]
		{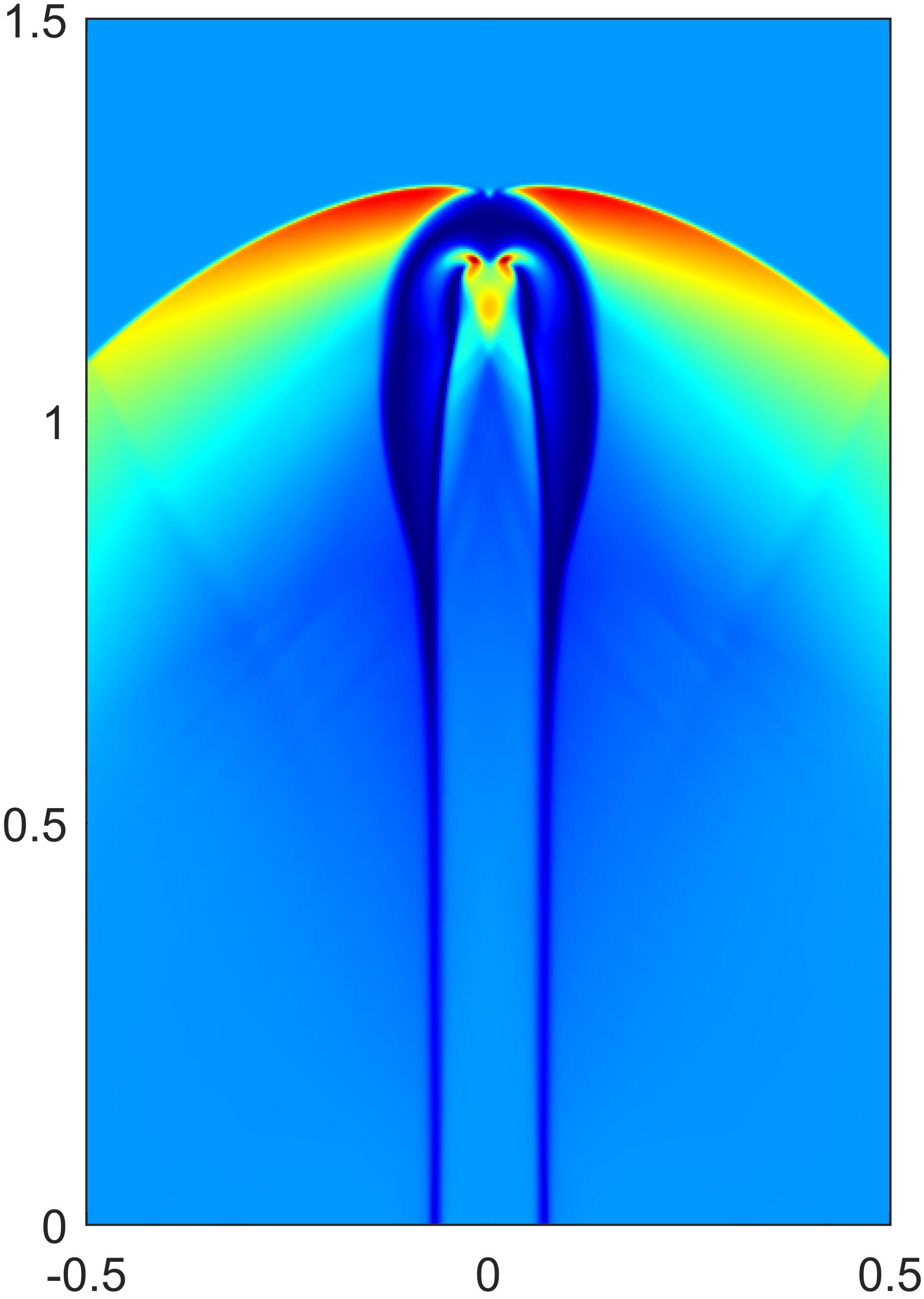}
	}
	\captionsetup{belowskip=-12pt}
	\caption{Astrophysical jets: the density logarithm (top) and the magnetic pressure (bottom) at $t = 0.002$ for Cases 1 to 3 (from left to right).} 
	\label{fig:Jet-Density}
\end{figure}

\section{Conclusions} \label{section:conclusion}

This paper has presented the first rigorous analysis on the positivity-preserving (PP) property of the central discontinuous Galerkin (CDG) approach  
for ideal magnetohydrodynamics (MHD). The analysis has further led to our design of arbitrarily high-order provably PP, (locally) divergence-free (DF) CDG schemes for 1D and 2D MHD systems. 
We have found that the PP property of the standard CDG methods is closely related to 
a discrete DF condition, 
which differs from the non-central DG case. 
This finding laid the foundation for the design of our PP CDG schemes. 
In the 1D case, the discrete DF  
condition is naturally satisfied, and we have rigorously proved that the   
standard CDG method is PP under a condition satisfied easily using an existing PP limiter \cite{cheng}. 
However, in the multidimensional cases, the corresponding discrete DF  
condition is highly nontrivial yet critical, and we have analytically proved that  
the standard CDG method, even with the PP limiter, is not PP in general, 
as it generally fails to meet the discrete DF condition. 
We have addressed this issue by carefully analyzing the structure of the discrete divergence terms and 
then 
constructing new locally DF CDG schemes for Godunov's modified MHD equations \eqref{eq:MHD:GP}. 
A challenge we have settled is to find out the suitable discretization of the source term in \eqref{eq:MHD:GP} such that it exactly offsets the divergence terms in the discovered discrete DF condition.
Based on the geometric quasilinearization approach, we have proved in theory   
the PP property of the new multidimensional CDG schemes under a CFL condition. 
Extensive benchmark and demanding numerical tests have been conducted to validate the performance of the proposed PP CDG schemes. 

In the future, we hope to further explore high-order numerical schemes preserving both the positivity and the globally DF property simultaneously. We hope our findings and newly developed analysis techniques may motivate future developments in this direction as well as the exploration of other PP central type schemes for MHD and related equations.

\renewcommand\baselinestretch{0.85}

\bibliographystyle{siamplain}
\bibliography{references_short,references_short_CDG}

\end{document}